\def\vec{\mathop{\rm vec}\nolimits}
\def\dist{\mathop{\rm dist}\nolimits}
\def\prox{\mathop{\rm prox}\nolimits}
\def\argmin{\mathop{\rm argmin}\nolimits}
\def\dom{\mathop{\rm dom}\nolimits}
\def\amp{\mathop{\;\:}\nolimits}
\newcommand{\ba}{\boldsymbol{a}}
\newcommand{\bb}{\boldsymbol{b}}
\newcommand{\bc}{\boldsymbol{c}}
\newcommand{\bd}{\boldsymbol{d}}
\newcommand{\be}{\boldsymbol{e}}
\newcommand{\bg}{\boldsymbol{g}}
\newcommand{\bp}{\boldsymbol{p}}
\newcommand{\bq}{\boldsymbol{q}}
\newcommand{\br}{\boldsymbol{r}}
\newcommand{\bs}{\boldsymbol{s}}
\newcommand{\bu}{\boldsymbol{u}}
\newcommand{\bv}{\boldsymbol{v}}
\newcommand{\bw}{\boldsymbol{w}}
\newcommand{\bx}{\boldsymbol{x}}
\newcommand{\by}{\boldsymbol{y}}
\newcommand{\bz}{\boldsymbol{z}}
\newcommand{\bA}{\boldsymbol{A}}
\newcommand{\bB}{\boldsymbol{B}}
\newcommand{\bC}{\boldsymbol{C}}
\newcommand{\bD}{\boldsymbol{D}}
\newcommand{\bG}{\boldsymbol{G}}
\newcommand{\bH}{\boldsymbol{H}}
\newcommand{\bI}{\boldsymbol{I}}
\newcommand{\bM}{\boldsymbol{M}}
\newcommand{\bN}{\boldsymbol{N}}
\newcommand{\bQ}{\boldsymbol{Q}}
\newcommand{\bS}{\boldsymbol{S}}
\newcommand{\bT}{\boldsymbol{T}}
\newcommand{\bU}{\boldsymbol{U}}
\newcommand{\bV}{\boldsymbol{V}}
\newcommand{\bW}{\boldsymbol{W}}
\newcommand{\bX}{\boldsymbol{X}}
\newcommand{\bY}{\boldsymbol{Y}}
\newcommand{\bZ}{\boldsymbol{Z}}
\newcommand{\bbeta}{\boldsymbol{\beta}}
\newcommand{\blambda}{\boldsymbol{\lambda}}
\newcommand{\bmu}{\boldsymbol{\mu}}
\newcommand{\bsigma}{\boldsymbol{\sigma}}
\newcommand{\btheta}{\boldsymbol{\theta}}
\newcommand{\bxi}{\boldsymbol{\xi}}
\newcommand{\bXi}{\boldsymbol{\Xi}}
\newcommand{\bSigma}{\boldsymbol{\Sigma}}
\newcommand{\bzero}{\boldsymbol{0}}
\newtheorem{secprop}{Proposition}[section]
\begin{document}

\title{Extensions to the Proximal Distance Method of Constrained Optimization}

\author{\name Alfonso Landeros \email alanderos@ucla.edu \\
       \addr Department of Computational Medicine\\
       University of California, Los Angeles\\
       CA 90095-1596, USA
       \AND
       \name Oscar Hernan Madrid Padilla \email oscar.madrid@stat.ucla.edu \\
       \addr Department of Statistics\\
       University of California, Los Angeles\\
	   CA 90095-1596, USA
	   \AND
	   \name Hua Zhou \email huazhou@ucla.edu \\
       \addr Departments of Biostatistics and Computational Medicine\\
       University of California, Los Angeles\\
	   CA 90095-1596, USA
	   \AND
	   \name Kenneth Lange \email klange@ucla.edu \\
       \addr Departments of Computational Medicine, Human Genetics, and Statistics\\
       University of California, Los Angeles\\
       CA 90095-1596, USA
}
\editor{TBA}

\maketitle

\begin{abstract}
The current paper studies the problem of minimizing a loss $f(\bx)$ subject to constraints of the form $\bD\bx \in S$, where $S$ is a closed set, convex or not, and $\bD$ is a matrix that fuses the parameters. Fusion constraints can capture smoothness, sparsity, or more general constraint patterns. To tackle this generic class of problems, we combine the Beltrami-Courant penalty method of optimization with the proximal distance principle. The latter is driven by minimization of penalized objectives $f(\bx)+\frac{\rho}{2}\text{dist}(\bD\bx,S)^2$ involving large tuning constants $\rho$ and the squared Euclidean distance of $\bD\bx$ from $S$. The next iterate $\bx_{n+1}$ of the corresponding proximal distance algorithm is constructed from the current iterate $\bx_n$ by minimizing the majorizing surrogate function $f(\bx)+\frac{\rho}{2}\|\bD\bx-\mathcal{P}_{S}(\bD\bx_n)\|^2$. For fixed $\rho$ and a subanalytic loss $f(\bx)$ and a subanalytic constraint set $S$, we prove convergence to a stationary point. Under stronger assumptions, we provide convergence rates and demonstrate linear local convergence.
We also construct a steepest descent (SD) variant to avoid costly linear system solves. To benchmark our algorithms, we compare their results to those delivered by the alternating direction method of multipliers (ADMM). Our extensive numerical tests include problems on metric projection, convex regression, convex clustering, total variation image denoising, and projection of a matrix to a good condition number. These experiments demonstrate the superior speed and acceptable accuracy of our steepest variant on high-dimensional problems. Julia  code to replicate all of our experiments can be found at \url{https://github.com/alanderos91/ProximalDistanceAlgorithms.jl}
\end{abstract}

\begin{keywords}
	Majorization minimization, steepest descent, ADMM, convergence
\end{keywords}

\section{Introduction}

The generic problem of minimizing a continuous function $f(\bx)$ over a closed set $S$ of $\mathbb{R}^p$ can be attacked by a combination of the penalty method and distance majorization. The classical penalty method seeks the solution of a penalized version $h_\rho(\bx)=f(\bx)+\rho q(\bx)$ of $f(\bx)$, where the penalty $q(\bx)$ is nonnegative and 0 precisely when $\bx \in S$. If one follows the solution vector $\bx_\rho$ as $\rho$ tends to $\infty$, then in the limit one recovers the constrained solution \citep{beltrami1970algorithmic,courant1943variational}. The function 
\begin{eqnarray*}
q(\bx) & = & \frac{1}{2} \dist(\bx,S)^2 
\amp = \amp \frac{1}{2}\min_{\by \in S} \|\bx-\by\|^2
\end{eqnarray*}
is one of the most fruitful penalties in this setting. Our previous research for solving this penalized minimization problem has focused on an MM (majorization-minimization) algorithm based on distance majorization \citep{chi2014distance,keys2019proximal}. In distance majorization one constructs the surrogate function
\begin{eqnarray*}
g_\rho(\bx \mid \bx_n) & = & f(\bx)+\frac{\rho}{2}
\|\bx-\mathcal{P}(\bx_n)\|^2 
\end{eqnarray*}
using the Euclidean projection $\mathcal{P}(\bx_n)$ of the current iterate $\bx_n$ onto $S$. The minimum of the surrogate occurs at the proximal point
\begin{eqnarray}
\bx_{n+1} & = & \prox_{\rho^{-1}f}[\mathcal{P}(\bx_n)].
\label{prox_dist_update}
\end{eqnarray}
According to the MM principle, this choice of $\bx_{n+1}$ decreases $g_\rho(\bx \mid \bx_n)$ and hence the objective $h_\rho(\bx)$ as well.
As we note in our previous {\it JMLR} paper \citep{keys2019proximal}, the update (\ref{prox_dist_update}) reduces to the classical proximal gradient method when $S$ is convex \citep{parikh2014proximal}.

We have named this iterative scheme the proximal distance algorithm \citep{keys2019proximal,lange2016mm}. It enjoys several virtues. First, it allows one to exploit the extensive body of results on proximal maps and projections. Second, it does not demand that the constraint set $S$ be convex. If $S$ is merely closed, then the map $\mathcal{P}(\bx)$ may be multivalued, and one must choose a representative element from the projection $\mathcal{P}(\bx_n)$. Third, the algorithm does not require the objective function $f(\bx)$ to be differentiable. Fourth, the algorithm dispenses with the chore of choosing a step length. Fifth, if sparsity is desirable, then the sparsity level can be directly specified rather than implicitly determined by the tuning parameter of the lasso or other penalty. 

Traditional penalty methods have been criticized for their numerical instability.  This hazard is mitigated in the proximal distance algorithm by its reliance on proximal maps, which usually are highly accurate.  The major defect of the proximal distance algorithm is slow convergence. This can be ameliorated by Nesterov acceleration \citep{nesterov2013introductory}. There is also the question of how fast one should send $\rho$ to $\infty$. Although no optimal schedule is known, simple numerical experiments usually yield a good choice. Finally, soft constraints can be achieved by stopping the steady increase of $\rho$ at a finite value.

\subsection{Proposed Framework}
\label{sec:extensions}

This simple version of distance majorization can be generalized in various ways. For instance, it can be expanded to multiple constraint sets. In practice, at most two constraint sets usually suffice. Another generalization is to replace the constraint $\bx \in S$ by the constraint $\bD \bx \in S$, where $\bD$ is a compatible matrix. Again, the original case $\bD=\bI$ is allowed. By analogy with the fused lasso of \cite{tibshirani2005sparsity}, we will call the matrix $\bD$ a fusion matrix. This paper is devoted to the study of the general problem of minimizing a differentiable function $f(\bx)$ subject to $r$ fused constraints $\bD_i \bx \in S_i$. We will approach this problem by extending the proximal distance method. For a fixed penalty constant $\rho$, the objective function and its MM surrogate now become 
\begin{eqnarray*}
h_\rho(\bx) & = & f(\bx) + \frac{\rho}{2} \sum_{i=1}^r \dist(\bD_i\bx,S_i)^2 \\ 
g_\rho(\bx \mid  \bx_n) & = & f(\bx) + \frac{\rho}{2} \sum_{i=1}^r \|\bD_i\bx - \mathcal{P}_i(\bD_i \bx_n)\|^2,
\end{eqnarray*}
where $\mathcal{P}_i(\by)$ denotes the projection of $\by$ onto $S_i$. Any or all of the fusion matrices $\bD_i$ can be the identity $\bI$.

Fortunately, we can simplify the problem by defining $S$ to be the Cartesian product $\prod_{i=1}^r S_i$ and $\bD$ to be the stacked matrix 
\begin{eqnarray*}
\bD & = & \begin{pmatrix} \bD_1 \\ \vdots \\ \bD_r \end{pmatrix} .
\end{eqnarray*}
Our objective and surrogate then revert to the less complicated forms
\begin{eqnarray}
h_\rho(\bx) & = & f(\bx) + \frac{\rho}{2} \dist(\bD\bx,S)^2 \label{obj_fun}\\
g_\rho(\bx \mid  \bx_n) & = & f(\bx) + \frac{\rho}{2} \|\bD\bx - \mathcal{P}(\bD \bx_n)\|^2,
\label{surrogate_fun}
\end{eqnarray}
where $\mathcal{P}(\bx)$ is the Cartesian product of the projections $\mathcal{P}_i(\bx)$. Note that all closed sets $S_i$ with simple projections, including sparsity sets, are fair game.

\subsection{Contributions}

In the framework described above, we summarize the contributions of the current paper.

\begin{description}

\item[(a)] Section \ref{sec:methods} describes different solution algorithms for minimizing the penalized loss $h_{\rho}(\bx)$. Our first algorithm is based on Newton's method applied to the surrogate $g_{\rho}(\bx \mid \bx_n)$. For some important problems, Newton's method reduces to least squares.  Our second method is a steepest descent algorithm on $g_{\rho}(\bx \mid \bx_{n})$ tailored to high dimensional problems.

\item[(b)] For a sufficiently large $\rho$, we show that when $f(\bx)$ and $S$ are convex and $f(\bx)$ possesses a unique minimum point $\by \in \bD^{-1}(S)$, the penalized loss $h_{\rho}(\bx)$ attains its minimum value. This is the content of Proposition \ref{propositiona}. Similarly, Proposition \ref{propositionb} shows that the surrogate $g_{\rho}(\bx \mid \bx_{n})$ also attains its minimum.  
    
\item[(c)] If in addition $f(\bx)$ is differentiable, then Proposition \ref{propositionc} demonstrates that the MM iterates $\bx_n$ for minimizing $h_{\rho}(\bx)$ satisfy
\begin{eqnarray*}
 h_{\rho}(\bx_n) - h_{\rho}(\bz_{\rho}) & = & O\left( \frac{\rho}{n}  \right),
\end{eqnarray*}
where $\bz_{\rho}$ minimizes $h_{\rho}(\bx)$. If $f(\bx)$ is also $L$-smooth and $\mu$-strongly convex, then Proposition \ref{proposition0} shows that $\bz_{\rho}$ is unique and the iterates $\bx_n$ converge to $\bz_{\rho}$ at a linear rate.

\item[(d)] More generally, Proposition \ref{MMconvergence} shows
that the iterates $\bx_n$ of a generic MM algorithm for minimizing a coercive subanalytic function $h(\bx)$ with a good surrogate converge to a stationary point. Our objectives and their surrogates fall into this category.

\item[(e)] Finally, we discuss a competing alternating direction method of multipliers (ADMM) algorithm and note its constituent updates. Our extensive numerical experiments compare the 
two proximal distance algorithms to ADMM. We find that proximal distance algorithms are competitive with and often superior to ADMM in terms of accuracy and running time.

\end{description}

\section{Different Solution Algorithms}
\label{sec:methods}

Unless $f(\bx)$ is a convex quadratic, exact minimization of the surrogate $g_\rho(\bx \mid \bx_n)$ is likely infeasible. 
As we have already mentioned, to reduce the objective $h_\rho(\bx)$ in \eqref{obj_fun}, it suffices to reduce the surrogate \eqref{surrogate_fun}. For the latter task, we recommend Newton's method on small and intermediate-sized problems and steepest descent on large problems. The exact nature of these generic methods are problem dependent.
The following section provides a high-level overview of each strategy and we defer details on our later numerical experiments to the appendices.

\subsection{Newton's Method and Least Squares}

Unfortunately, the proximal operator $\prox_{\rho^{-1}f}(\by)$ is no longer relevant in calculating the MM update $\bx_{n+1}$. When $f(\bx)$ is smooth, Newton's method for the surrogate $g_\rho(\bx \mid \bx_n)$ employs the update
\begin{eqnarray*}
\bx_{n+1} & \! = \! & \bx_n - \Big[\bH_n + \rho\bD^t\bD\Big]^{-1}
\Big\{\nabla f(\bx_n)+\rho \bD^t[\bD\bx_n - \mathcal{P}(\bD\bx_n)]\Big\},
\end{eqnarray*}
where $\bH_n=d^2f(\bx_n)$ is the Hessian. To enforce the descent property, it is often prudent to substitute a positive definite approximation  $\bH_n$ for  $d^2f(\bx_n)$. In statistical applications, the expected information matrix is a natural substitute. It is also crucial to retain as much curvature information on $f(\bx)$ as possible. Newton's method has two drawbacks. First, it is necessary to compute and store $d^2f(\bx_n)$. This is mitigated in statistical applications by the substitution just mentioned. Second, there is the necessity of solving a large linear system. Fortunately, the matrix $\bH_n + \rho \bD^t \bD$ is often well-conditioned, for example, when $\bD$ has full column rank and $\bD^t\bD$ is positive definite. The method of conjugate gradients can be called on to solve the linear system in this ideal circumstance.

To reduce the condition number of the matrix $\bH_n + \rho \bD^t\bD$ even further, one can sometimes rephrase the Newton step as iteratively reweighted least squares. For instance, in a generalized linear model, the gradient $\nabla f(\bx)$ and the expected information $\bH$ can be written as
\begin{eqnarray*}
\nabla f(\bx) & = &  - \bZ^t \bW^{1/2}\br \quad \text{and} \quad
\bH \amp = \amp \bZ^t\bW \bZ,
\end{eqnarray*}
where $\br$ is a vector of standardized residuals, $\bZ$ is a design matrix, and $\bW$ is a diagonal matrix of case weights \citep{lange2010numerical,nelder1972generalized}. The Newton step is now equivalent to minimizing the least squares criterion
\begin{eqnarray*}
\frac{1}{2}\bx^*\bH_n\bx-\nabla f(\bx_n)^*\bx & = &
\|\bW_n^{1/2} \bZ \bx-\bW_n^{-1/2} \nabla f(\bx_n)\|^2 \\
& = & \left\|\begin{pmatrix} \bW_n^{1/2}\bZ \\
\sqrt{\rho}\bD \end{pmatrix}\bx 
-\begin{pmatrix} \bW_n^{1/2}\bZ\bx_n+\br_n\\
\sqrt{\rho}\mathcal{P}(\bD\bx_n)  \end{pmatrix}
\right\|^2.
\end{eqnarray*}
In this context a version of the conjugate gradient algorithm adapted to least squares is attractive. The algorithms LSQR \citep{paige1982lsqr} and LSMR \citep{fong2011lsmr} perform well when the design is sparse or ill conditioning is an issue.

\subsection{Proximal Distance by Steepest Descent}
\label{sec:sd}

In high-dimensional optimization problems, gradient descent is typically employed to avoid matrix inversion. Determination of an appropriate step length is now a primary concern. In the presence of  fusion constraints $\bD \bx \in S$ and a convex quadratic loss $f(\bx)=\frac{1}{2}\bx^t\bA\bx+\bb^t\bx$, the gradient of the proximal distance objective at $\bx_n$ amounts to
\begin{eqnarray*}
\bv_n & = &  \bA_n\bx+\bb+\rho \bD^t[\bD\bx_n-\mathcal{P}(\bD\bx_n)].
\end{eqnarray*}
For the steepest descent update $\bx_{n+1}= \bx_n-t_n \bv_n$, one can show that the optimal step length is  
\begin{eqnarray*}
t_n & = & \frac{\|\bv_n\|^2}{\bv_n^t \bA \bv_n + \rho \|\bD\bv_n\|^2}. 
\end{eqnarray*}
This update obeys the descent property and avoids matrix inversion.  One can also substitute a local convex quadratic approximation around $\bx_n$ for $f(\bx)$. If the approximation majorizes $f(\bx)$, then the descent property is preserved. In the failure of majorization, the safeguard of step halving is trivial to implement.

In addition to Nesterov acceleration, gradient descent can be accelerated by the subspace MM technique \citep{chouzenoux2010majorize}. Let $\bG_n$ be the matrix with $k$ columns determined by the $k$ most current gradients of the objective $h_\rho(\bx)$, including $\nabla h_\rho(\bx_n)$. Generalizing our previous assumption, suppose $f(\bx)$ has a quadratic surrogate with Hessian $\bH_n$ at $\bx_n$. Overall we get the quadratic surrogate
\begin{eqnarray*}
q_\rho(\bx \mid \bx_n) & = & g_\rho(\bx_n \mid \bx_n)+ \nabla g_\rho(\bx_n \mid \bx_n)^t(\bx-\bx_n)\\
&  & + \frac{1}{2}(\bx -\bx_n)^t (\bH_n + \rho \bD^t\bD)(\bx-\bx_n) 
\end{eqnarray*}
of $g_\rho(\bx \mid \bx_n)$. We now seek the best linear perturbation $\bx_n+\bG_n\bbeta$ of $\bx_n$ by minimizing $q_\rho(\bx_n+\bG_n\bbeta \mid \bx_n)$ with respect to the coefficient vector $\bbeta$. To achieve this end, we solve the stationary equation
\begin{eqnarray*}
{\bf 0} & = & \bG_n^t\nabla g_\rho(\bx_n \mid \bx_n)+\bG_n^t (\bH_n + \rho \bD^t\bD) \bG_n\bbeta
\end{eqnarray*}
and find $\bbeta=-[\bG_n^t(\bH_n + \rho \bD^t\bD) \bG_n]^{-1}\bG_n^t\nabla g_\rho(\bx_n \mid \bx_n)$, where the gradient is
\begin{eqnarray*}
	\nabla g_\rho(\bx_n \mid \bx_n) & = & \nabla h_\rho(\bx_n) \amp = \amp \nabla f(\bx_n)+\rho \bD^t[\bD\bx_n - \mathcal{P}(\bD\bx_n)].
\end{eqnarray*}
The indicated matrix inverse is just $k \times k$.

\subsection{ADMM}
\label{sec:admm}

ADMM (alternating direction method of multipliers) is a natural competitor to the proximal distance algorithms just described \citep{hong2016convergence}. ADMM is designed to minimize functions of the form $f(\bx)+g(\bD\bx)$ subject to $\bx \in C$, where $C$ is closed and convex. Splitting variables leads to the revised objective $f(\bx)+g(\by)$ subject to $\bx \in C$ and $\by=\bD\bx$. ADMM invokes the augmented Lagrangian
\begin{eqnarray*}
\mathcal{A}_\mu(\bx,\by, \blambda) & = & f(\bx)+ g(\by) + \blambda^t(\bD\bx-\by)+\frac{\mu}{2} \|\bD\bx -\by\|^2 
\end{eqnarray*}
with Lagrange multiplier $\blambda$ and step length $\mu>0$. At iteration $n+1$ of ADMM one calculates successively 
\begin{eqnarray}
\bx_{n+1} & = & \argmin_{\bx \in C} \Big[f(\bx)+
\frac{\mu}{2}\|\bD\bx-\by_{n} +\blambda_{n} \|^2\Big] \label{admm1} \\
\by_{n+1} & = & \argmin_{\by} \Big[g(\by) +
\frac{\mu}{2} \|\bD\bx_{n+1}-\by +\blambda_{n} \|^2\Big] \label{admm2} \\
\blambda_{n+1} & = & \blambda_{n}+\mu(\bD\bx_{n+1}-\by_{n+1}). \label{admm3}
\end{eqnarray}
Update (\ref{admm1}) succumbs to Newton's
method when $f(\bx)$ is smooth and $C=\mathbb{R}^p$, and update (\ref{admm2})
succumbs to a proximal map of $g(\by)$.
Update (\ref{admm3}) of the Lagrange multiplier $\blambda$ amounts to steepest ascent on the dual function. A standard extension to the scheme in (\ref{admm1}) through (\ref{admm3}) is to vary the step length $\mu$ by considering the magnitude of residuals \citep{boyd2011distributed}.
For example, letting $\br_{n} = \bD \bx - \by$ and $\bs_{n} = \mu \bD^{t} (\by_{n-1} - \by_{n})$ denote primal and dual residuals at iteration $n$, we make use of the heuristic
\begin{eqnarray*}
    \mu_{n+1} & = & \begin{cases}
        2~\mu_{n}, & \text{if}~\|\br_{n}\| / \|\bs_{n}\| > 10 \\
        \mu_{n} / 2, & \text{if}~\|\br_{n}\| / \|\bs_{n}\| < 10 \\
        \mu_{n}, & \text{otherwise}
    \end{cases}
\end{eqnarray*}
which (a) keeps the primal and dual residuals within an order of magnitude of each other, (b) makes ADMM less sensitive to the choice of step length, and (c) improves convergence.

Our problem conforms to the ADMM paradigm when $S$ is equal to the Cartesian product $\prod_{i=1}^r S_i$ and 
$g(\by) = \frac{\rho}{2} \dist(\by,S)^2$. Fortunately, the $\by$ update (\ref{admm2}) reduces to a simple formula \citep{bauschke2017convex}. To derive this formula, note that the proximal map $\by=\prox_{\alpha g}(\bz)$ satisfies the stationary condition
\begin{eqnarray*}
{\bf 0} & = & \by-\bz+\alpha [\by-\mathcal{P}(\by)] 
\end{eqnarray*}
for any $\bz$, including $\bz = \bD\bx_{n+1}+\blambda_{n}$, and any $\alpha$, including $\alpha = \rho/\mu$. Since the projection map $\mathcal{P}(\by)$ has the constant value $\mathcal{P}(\bz)$ on the line segment $[\bz,\mathcal{P}(\bz)]$, the value 
\begin{eqnarray*}
\prox_{\alpha g}(\bz) & = & \frac{\alpha}{1+\alpha}\mathcal{P}(\bz)+\frac{1}{1+\alpha}\bz
\end{eqnarray*}
satisfies the stationary condition. Because the explicit update  (\ref{admm2}) for $\by$ decreases the Lagrangian even when $S$ is nonconvex, we will employ it generally.   

The $\bx$ update (\ref{admm1}) is given by the proximal map $\prox_{\mu^{-1}f}(\blambda_n-\by_n)$ when $S=\mathbb{R}^p$ and $\bD=\bI$. Otherwise, the  update of  $\bx$ is more problematic. Assuming $f(\bx)$ is smooth and $S=\mathbb{R}^p$,  Newton's method gives the approximate update
\begin{eqnarray*}
\bx_{n+1} & = & \bx_n - \Big[d^2f(\bx_n)+ \mu \bD^t\bD \Big]^{-1} \Big[\nabla f(\bx_n)+\mu \bD^t (\bD\bx_n-\by_{n}+\blambda_{n})\Big]. 
\end{eqnarray*}
Our earlier suggestion of replacing $d^2f(\bx_n)$ by a positive definite approximation also applies here. Let us emphasize that ADMM  eliminates the need for distance majorization. Although distance majorization is convenient, it is not necessarily a tight majorization. Thus, one can hope to see gains in rates of convergence. Balanced against this positive is the fact that ADMM is often slow to converge to high accuracy.

\subsection{Proximal Distance Iteration}

We conclude this section by describing proximal distance algorithms in pseudocode. As our theoretical results will soon illustrate, the choice of penalty parameter $\rho$ is tied to the convergence rate of any proximal distance algorithm. Unfortunately, a large value for $\rho$ is necessary for the iterates to converge to the constraint set $S$.
We ameliorate this issue by slowly sending $\rho \to \infty$ according to annealing schedules from the family of geometric progressions $\rho(t) =r^{t-1}$ with $t \ge 1$. Here we parameterize the family by an initial value $\rho = 1$ and a multiplier $r>1$. Thus, our methods approximate solutions to $\min f(\bx)$ subject to $\bD \bx \in S$ by solving a sequence of increasingly penalized subproblems, $\min_{\bx} h_{\rho(t)}(\bx)$.
In practice we can only solve a finite number of subproblems so we prescribe the following convergence criteria
\begin{eqnarray}
    \label{cond:h}
    \|\nabla h_{\rho(t)}(\bx_{n})\| & \le & \delta_{h}, \\
    \label{cond:d}
    \dist(\bD\bx_{t}, S) & \le & \delta_{d}, \text{ or } \\
    \label{cond:q}
    |\dist(\bD\bx_{t}, S) - \dist(\bD\bx_{t-1}, S)| & \le & \delta_{q} [\dist(\bD\bx_{t-1}, S) + 1]
\end{eqnarray}
Condition \eqref{cond:h} is a guarantee that a solution estimate $\bx_{n}$ is close to a stationary point after $n$ inner iterations for the fixed value of $\rho =\rho(t)$. In conditions \eqref{cond:d} and 
\eqref{cond:q}, the vector $\bx_{t}$ denotes the $\delta_{h}$-optimal solution estimate once condition \eqref{cond:h} is satisfied for a particular subproblem along the annealing path.
Condition \eqref{cond:d} is a guarantee that solutions along the annealing path adhere to the fusion constraints at level $\delta_{d}$.
In general, condition \eqref{cond:d} can only be satisfied for large values of $\rho(t)$. Finally, condition \eqref{cond:q} is used to terminate the annealing process if the relative progress made in decreasing the distance penalty becomes too small as measured by $\delta_{q}$. Algorithm \ref{alg:pdi} summarizes the flow of proximal distance iteration, which uses Nesterov acceleration in inner iterations. Warm starts in solving subsequent subproblem are implicit in our formulation.
\begin{algorithm}[tbp]
\caption{Proximal Distance Iteration}
\label{alg:pdi}
\begin{algorithmic}[1]
    \footnotesize
    \State Set tolerances $\delta_{h}, \delta_{d}, \delta_{q}$, and annealing schedule $\{\rho(t)\}_{t \ge 1}$.
    \State Set maximum outer iterations $i_{\mathrm{outer}}$ and maximum inner iterations $i_{\mathrm{inner}}$.
    \State Set threshold for Nesterov acceleration, $i_{\mathrm{Nesterov}}$.
    \State Initialize $\bx$ and $q_{0} \gets \dist(\bD\bx, S)$.
    \State Initialize counter for Nesterov acceleration $i \gets 1$.
    \For {outer iterations $t = 1,2,\ldots, i_{\mathrm{outer}}$}
        \State Initialize $\bx_{1} \gets \bx$ and $\bz_{1} \gets \bx$.
        \For {inner iterations $n = 1,2,\ldots, i_{\mathrm{inner}}$}
            \If {$\|\nabla h_{\rho(t)}(\bx_{n})\| \le \delta_{h}$}
                \State Break inner loop.
            \Else
                \State Solve the subproblem $\bx_{n+1} \gets \argmin g_{\rho(t)}(\bx \mid \bz_{n})$.
                \Comment{e.g. using Newton, SD, or ADMM}
                \If {$h_{\rho(t)}(\bx_{n+1}) < h_{\rho(t)}(\bx_{n})$ AND $n \ge i_{\mathrm{Nesterov}}$}
                    \Comment{stability check before accelerating}
                    \State Accelerate $\bz_{n+1}\gets \bx_{n+1} + \frac{i-1}{i+2} (\bx_{n+1} - \bx_{n})$.
                    \State Increment $i \gets i+1$.
                \Else
                    \State Reset Nesterov acceleration; $i \gets 1$ and $\bz_{n+1} \gets \bx_{n+1}$.
                \EndIf
            \EndIf
        \EndFor

        \State Update $\bx \gets \bx_{n}$ and set $q_{t} \gets \dist(\bD\bx, S)$.
        \If {$q_{t} < \delta_{d}$ OR $|q_{t} - q_{t-1}| < \delta_{q}[1 + q_{t-1}]$}
            \State Break outer loop.
        \EndIf
    \EndFor
\end{algorithmic}
\end{algorithm}

\section{Convergence Analysis: Convex Case}
\label{sec:analysis}

Let us begin by establishing the existence of a minimum point. Further constraints on $\bx$ beyond those imposed in the distance penalties are ignored or rolled into the essential domain of $f(\bx)$ when $f(\bx)$ is convex. As noted earlier, we can assume a single fusion matrix $\bD$ and a single closed convex constraint set $S$. In such setting we have the following result. Proofs are deferred to Section \ref{sec:proofs}.
\begin{secprop} \label{propositiona}
Suppose the convex function $f(\bx)$ on $\mathbb{R}^p$ possesses a unique minimum point $\by$ on the closed convex set $T = \bD^{-1}(S)$. Then for all sufficiently large $\rho$, the objective 
$h_\rho(\bx) = f(\bx)+\frac{\rho}{2} \dist(\bD\bx,S)^2$ is coercive and therefore attains its minimum value.
\end{secprop}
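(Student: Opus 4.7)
The plan is to invoke the standard convex-analytic characterization that a closed proper convex function $h$ on $\mathbb{R}^p$ is coercive if and only if its recession function $h^\infty(\bd) = \lim_{t \to \infty}[h(\bx_0+t\bd)-h(\bx_0)]/t$ is strictly positive for every $\bd \neq \bzero$. Since $f$ is convex and $\bx \mapsto \dist(\bD\bx,S)^2$ is a convex composition (the nonnegative convex function $\dist(\cdot,S)$ squared, then precomposed with the affine map $\bD$), $h_\rho$ is convex. So it will suffice to show $h_\rho^\infty(\bd) > 0$ for every nonzero $\bd$, using $h_\rho^\infty = f^\infty + \frac{\rho}{2}(\dist(\bD\cdot,S)^2)^\infty$.

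First I would compute the recession function of the penalty. Fix any $\bv_0 \in S$; then $t \mapsto \dist(\bv_0+t\bu,S)$ is convex, nonnegative, and vanishes at $t=0$, so its secant slope through the origin is non-decreasing in $t$. If $\bu \in \text{rec}(S)$, this distance vanishes identically. Otherwise the slope is eventually bounded below by some $\alpha > 0$, which forces $\dist(\bv_0+t\bu,S)^2 \ge \alpha^2 t^2$ for large $t$ and hence $\dist(\bv_0+t\bu,S)^2/t \to \infty$. So $(\dist(\bD\cdot,S)^2)^\infty(\bd)$ equals $0$ when $\bD\bd \in \text{rec}(S)$, equivalently when $\bd \in \text{rec}(T) = \bD^{-1}(\text{rec}(S))$, and equals $+\infty$ otherwise.

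Then I would split into two cases. If $\bd \notin \text{rec}(T)$, the penalty's recession is $+\infty$; since $f$ is finite-valued convex on $\mathbb{R}^p$ we have $f^\infty(\bd) > -\infty$, so $h_\rho^\infty(\bd) = +\infty > 0$ for any $\rho > 0$. If instead $\bd \in \text{rec}(T)\setminus\{\bzero\}$, then $\by + \bd$ lies in $T$ and differs from $\by$, so uniqueness of $\by$ as the minimizer of $f$ on $T$ gives $f(\by+\bd) > f(\by)$. Applying the same secant-slope monotonicity to $t \mapsto f(\by+t\bd)$ yields $f^\infty(\bd) \ge f(\by+\bd)-f(\by) > 0$; the penalty contributes $0$, so $h_\rho^\infty(\bd) > 0$. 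In either case $h_\rho^\infty(\bd) > 0$, hence $h_\rho$ is coercive. Since $h_\rho$ is continuous on $\mathbb{R}^p$, its sublevel sets are closed and bounded, hence compact, so $h_\rho$ attains its infimum.

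The one delicate step is the recession computation for the squared penalty: one must notice that although $\dist(\cdot,S)$ itself has the finite-valued recession function $\dist(\cdot,\text{rec}(S))$, squaring a linearly growing nonnegative function produces quadratic growth, so the recession function of $\dist(\cdot,S)^2$ jumps discontinuously from $0$ on $\text{rec}(S)$ to $+\infty$ off it. As a side observation, this approach in fact establishes coerciveness for every $\rho > 0$, which is slightly stronger than the ``sufficiently large $\rho$'' in the statement; the uniqueness hypothesis is doing all of the work on the recession directions of $T$, while the penalty alone handles every other direction.
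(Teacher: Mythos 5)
Your proof is correct, but it follows a genuinely different route from the paper's. You argue via recession-function calculus: $h_\rho$ is convex, its recession function splits as $f^\infty + \frac{\rho}{2}\bigl(\dist(\bD\cdot,S)^2\bigr)^\infty$, the squared-distance term has recession $0$ on $\mathrm{rec}(T)=\bD^{-1}(\mathrm{rec}(S))$ and $+\infty$ elsewhere (your quadratic-growth observation is the key point and is handled correctly), and uniqueness of $\by$ forces $f^\infty(\bd)>0$ on $\mathrm{rec}(T)\setminus\{\bzero\}$ via monotone secant slopes. The paper instead translates so $\by=\bzero$, works on the unit sphere $B$ centered at $\by$, uses a compactness/contradiction argument to extract a margin $\delta>0$ near the constraint region, chooses $\rho$ large enough that $\frac{\rho}{2}\delta^2$ dominates any dip of $f$ on the rest of $B$, and then converts the resulting strict domination $h_\rho > h_\rho(\bzero)$ on $B$ into coercivity along rays (one-sided derivative positive at $t=1$), citing the fact that ray-wise coercivity suffices for convex functions. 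The trade-off: the paper's argument is more elementary and self-contained, avoiding recession-cone machinery, but it genuinely needs $\rho$ large and only delivers the stated "sufficiently large $\rho$" version; your argument leans on standard results from convex analysis (recession characterization of bounded level sets, sum and affine-composition rules, with the minor bookkeeping that $f^\infty>-\infty$ for finite convex $f$ so no $\infty-\infty$ arises), and in return yields the sharper conclusion that $h_\rho$ is coercive for every $\rho>0$, with the uniqueness hypothesis used exactly on the recession directions of $T$ and the penalty handling all other directions.
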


Next we show that the majorization surrogate defined in (\ref{surrogate_fun}) attains its minimum value for large enough $\rho$.

\begin{secprop} \label{propositionb}
Under the conditions of Proposition \ref{propositiona}, for sufficiently large $\rho$, every surrogate $g_\rho(\bx \mid \bx_n) = f(\bx)+\frac{\rho}{2}\|\bD\bx-\mathcal{P}(\bD\bx_n)\|^2$ is coercive and therefore attains its minimum value. If  
\begin{eqnarray*}
f(\bx) & \ge &  f(\bx_n)+\bv_n^t(\bx-\bx_n)+\frac{1}{2}
(\bx-\bx_n)^t\bA(\bx-\bx_n)
\end{eqnarray*}
for all $\bx$ and some positive semidefinite matrix $\bA$ and subgradient $\bv_n$ at $\bx_n$, and if the inequality $\bu^t \bA \bu>0$ holds whenever $\|\bD\bu\|=0$ and $\bu \ne {\bf 0}$, then for $\rho$ sufficiently large, $g_\rho(\bx \mid \bx_n)$ is strongly convex and hence coercive.
\end{secprop}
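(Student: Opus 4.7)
For the first claim, the plan is to combine the MM majorization inequality with Proposition \ref{propositiona}. Since $\mathcal{P}(\bD\bx_n)\in S$, one has $\dist(\bD\bx,S)\le\|\bD\bx-\mathcal{P}(\bD\bx_n)\|$, so the pointwise bound $g_\rho(\bx\mid\bx_n)\ge h_\rho(\bx)$ holds for every $\bx$. Because Proposition \ref{propositiona} already shows $h_\rho$ is coercive for all sufficiently large $\rho$, the surrogate $g_\rho(\cdot\mid\bx_n)$ inherits coercivity and attains its minimum by the usual compactness of its sublevel sets.

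For the second claim, my plan is to expand $\frac{\rho}{2}\|\bD\bx-\mathcal{P}(\bD\bx_n)\|^2$ around $\bx_n$, producing the quadratic term $\frac{\rho}{2}(\bx-\bx_n)^t\bD^t\bD(\bx-\bx_n)$ plus an affine correction. Combined with the hypothesized quadratic minorant on $f$, this yields
\begin{equation*}
g_\rho(\bx\mid\bx_n)\;\ge\;\alpha+\bw^t(\bx-\bx_n)+\tfrac{1}{2}(\bx-\bx_n)^t(\bA+\rho\bD^t\bD)(\bx-\bx_n)
\end{equation*}
for some scalar $\alpha$ and vector $\bw$. The task then reduces to showing that $\bu^t(\bA+\rho\bD^t\bD)\bu\ge\mu\|\bu\|^2$ for some $\mu>0$ once $\rho$ is large enough. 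To promote this lower bound to genuine strong convexity of $g_\rho$, I would read the quadratic-minorant hypothesis as holding at every base point with the common matrix $\bA$, which is equivalent to the convexity of $f(\bx)-\tfrac{1}{2}\bx^t\bA\bx$; then $g_\rho(\bx\mid\bx_n)-\tfrac{1}{2}\bx^t(\bA+\rho\bD^t\bD)\bx$ is convex up to an affine term, so $g_\rho(\cdot\mid\bx_n)$ is $\mu$-strongly convex, and coercivity follows from quadratic growth around its minimizer.

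The main obstacle is establishing the uniform positive definiteness of $\bA+\rho\bD^t\bD$, because $\bA$ may be singular in directions transverse to $\ker(\bD)$ while $\bD^t\bD$ vanishes on $\ker(\bD)$, so neither term alone dominates. My plan here is a compactness argument on $S^{p-1}=\{\bu:\|\bu\|=1\}$. The intersection $K=\ker(\bD)\cap S^{p-1}$ is compact, and the hypothesis $\bu^t\bA\bu>0$ on $\ker(\bD)\setminus\{\bzero\}$ together with continuity of $\bu\mapsto\bu^t\bA\bu$ yields constants $m>0$ and $\delta>0$ such that $\bu^t\bA\bu\ge m/2$ throughout the open neighborhood $U_\delta=\{\bu\in S^{p-1}:\dist(\bu,K)<\delta\}$. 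On the complementary compact set $S^{p-1}\setminus U_\delta$, the continuous map $\bu\mapsto\|\bD\bu\|^2$ is strictly positive (every unit-length null vector of $\bD$ lies in $K\subset U_\delta$), so it attains a positive minimum $\varepsilon_\delta$. Choosing any $\rho\ge m/(2\varepsilon_\delta)$ then forces $\bu^t(\bA+\rho\bD^t\bD)\bu\ge m/2$ uniformly on $S^{p-1}$, supplying the desired strong-convexity modulus.
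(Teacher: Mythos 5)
Your proof is correct, and while the first claim is handled exactly as in the paper (the bound $g_\rho(\bx\mid\bx_n)\ge h_\rho(\bx)$ plus Proposition \ref{propositiona}), your treatment of the second claim takes a genuinely different route. The paper reduces the matter to showing that $\bA+\rho\bD^t\bD$ is positive definite for some $\rho$ and proves this by contradiction: assuming no $\rho$ works, it extracts unit vectors $\bu_m$ and scalars $\rho_m\to\infty$ with $\bu_m^t\bA\bu_m+\rho_m\|\bD\bu_m\|^2\le 0$, passes to a convergent subsequence, and concludes that the limit $\bu$ satisfies $\|\bD\bu\|=0$ yet $\bu^t\bA\bu\le 0$, contradicting the hypothesis (this is essentially Debreu's classical lemma, which the paper cites). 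Your argument is instead direct and constructive: you split the unit sphere into a neighborhood $U_\delta$ of $\ker(\bD)\cap S^{p-1}$, where continuity and compactness give $\bu^t\bA\bu\ge m/2$ (and adding the positive semidefinite $\rho\bD^t\bD$ preserves this), and the complementary compact set, where $\|\bD\bu\|^2\ge\varepsilon_\delta>0$ and positive semidefiniteness of $\bA$ lets $\rho\ge m/(2\varepsilon_\delta)$ do the work. This buys an explicit threshold for $\rho$ and a quantitative strong-convexity modulus, at the cost of a slightly longer argument. You are also more careful than the paper on one point it leaves implicit: the stated minorization at the single point $\bx_n$ only yields a quadratic lower bound (hence coercivity), not strong convexity of $g_\rho(\cdot\mid\bx_n)$ itself; your explicit reading of the hypothesis as holding at every base point with the common matrix $\bA$ (equivalently, convexity of $f(\bx)-\tfrac{1}{2}\bx^t\bA\bx$) is what legitimately upgrades positive definiteness of $\bA+\rho\bD^t\bD$ to strong convexity, and it is clearly the intended interpretation given the quadratic losses in the paper's applications.
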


We continue to assume that $f(\bx)$ and $S$ are convex and that a minimum point 
\begin{eqnarray}
\bx_{n+1} & \in &  \underset{\bx \in S}\argmin \; g_\rho(\bx \mid \bx_n) \label{MM_iterates}
\end{eqnarray}
of the surrogate $g_\rho(\bx \mid \bx_n)$ is available. Uniqueness of $\bx_{n+1}$ holds when $g_\rho(\bx \mid \bx_n)$ is strictly convex. The constraint set $S$ is implicitly captured by the essential domain of $f(\bx)$. Our earlier research shows that moving some constraints to the essential domain of $f(\bx)$ is sometimes helpful \citep{keys2019proximal,lange2016mm}. In any event, in our ideal convex setting we have a first convergence result for fixed $\rho$.	
\begin{secprop}
\label{propositionc}
Supposes a) that $S$ is closed and convex, b) that the loss $f(\bx)$ is convex and differentiable, and
c) that the constrained problem possesses a unique minimum point. For $\rho$ sufficiently large, let $\bz_\rho$ denote a minimal point of the objective $h_\rho(\bx)$ defined by equation (\ref{obj_fun}). Then the MM iterates (\ref{MM_iterates}) satisfy  
\begin{eqnarray*}
0 \le & h_{\rho}(\bx_n) - h_\rho(\bz_\rho) \amp \le \amp
\frac{\rho}{2(n+1)}\|\bD(\bz_\rho-\bx_0) \|^2.
\end{eqnarray*}
Furthermore, the iterate values $h_\rho(\bx_n)$ systematically decrease.
\end{secprop}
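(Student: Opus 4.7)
The plan is to combine the standard MM descent property with a one-step progress lemma that produces a telescoping bound, in the style of the ISTA analysis for the composite convex setting.

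First I would verify monotone decrease. By construction of $\bx_{n+1}$ as a minimizer of $g_\rho(\cdot\mid\bx_n)$ and the majorization identity $g_\rho(\bx_n\mid\bx_n)=h_\rho(\bx_n)$, together with the pointwise bound $h_\rho(\bx)\le g_\rho(\bx\mid\bx_n)$ that follows from $\dist(\bD\bx,S)^2\le\|\bD\bx-\mathcal P(\bD\bx_n)\|^2$, we get the MM chain $h_\rho(\bx_{n+1})\le g_\rho(\bx_{n+1}\mid\bx_n)\le g_\rho(\bx_n\mid\bx_n)=h_\rho(\bx_n)$. Proposition~\ref{propositionb} guarantees that the minimizer $\bx_{n+1}$ actually exists for $\rho$ large, and convexity of $f$ and of $\bx\mapsto\frac{\rho}{2}\dist(\bD\bx,S)^2$ makes $h_\rho$ convex.

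The core step is a one-step descent inequality of the form
\begin{equation*}
h_\rho(\bx_{n+1})-h_\rho(\bz_\rho)\;\le\;\tfrac{\rho}{2}\bigl[\|\bD(\bz_\rho-\bx_n)\|^{2}-\|\bD(\bz_\rho-\bx_{n+1})\|^{2}\bigr]. \tag{$\star$}
\end{equation*}
To derive ($\star$), set $D(\bx)=\tfrac{\rho}{2}\dist(\bD\bx,S)^{2}$ and observe the identity
\begin{equation*}
g_\rho(\bx\mid\bx_n)=f(\bx)+D(\bx_n)+\langle\nabla D(\bx_n),\,\bx-\bx_n\rangle+\tfrac{\rho}{2}\|\bD(\bx-\bx_n)\|^{2},
\end{equation*}
which holds because $\nabla D(\bx_n)=\rho\bD^{t}(\bD\bx_n-\mathcal P(\bD\bx_n))$ and the cross term completes the square into $\tfrac{\rho}{2}\|\bD\bx-\mathcal P(\bD\bx_n)\|^{2}$. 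I would then majorize $h_\rho(\bx_{n+1})$ by $g_\rho(\bx_{n+1}\mid\bx_n)$, subtract $h_\rho(\bz_\rho)=f(\bz_\rho)+D(\bz_\rho)$, and apply the two subgradient inequalities $f(\bx_{n+1})-f(\bz_\rho)\le\langle\nabla f(\bx_{n+1}),\bx_{n+1}-\bz_\rho\rangle$ and $D(\bx_n)-D(\bz_\rho)\le\langle\nabla D(\bx_n),\bx_n-\bz_\rho\rangle$. The stationarity condition $\nabla f(\bx_{n+1})+\nabla D(\bx_n)+\rho\bD^{t}\bD(\bx_{n+1}-\bx_n)=\mathbf 0$ of $\bx_{n+1}$ lets me eliminate the gradients, leaving an expression of the form $-\rho\langle\bD(\bx_{n+1}-\bx_n),\bD(\bx_{n+1}-\bz_\rho)\rangle+\tfrac{\rho}{2}\|\bD(\bx_{n+1}-\bx_n)\|^{2}$. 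The parallelogram/cosine identity $-\langle\ba,\bb\rangle+\tfrac{1}{2}\|\ba\|^{2}=\tfrac{1}{2}\|\ba-\bb\|^{2}-\tfrac{1}{2}\|\bb\|^{2}$ with $\ba=\bD(\bx_{n+1}-\bx_n)$ and $\bb=\bD(\bx_{n+1}-\bz_\rho)$ then yields ($\star$).

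Finally, I would telescope ($\star$) over $n=0,1,\dots,N-1$ to get $\sum_{n=0}^{N-1}[h_\rho(\bx_{n+1})-h_\rho(\bz_\rho)]\le\tfrac{\rho}{2}\|\bD(\bz_\rho-\bx_0)\|^{2}$, drop the nonnegative term $\|\bD(\bz_\rho-\bx_N)\|^{2}$, and invoke the monotonicity proved at the start so that each summand is at least $h_\rho(\bx_N)-h_\rho(\bz_\rho)$. Dividing by $N$ (and reindexing $N\mapsto n+1$) produces the claimed bound $\tfrac{\rho}{2(n+1)}\|\bD(\bz_\rho-\bx_0)\|^{2}$; the lower bound $0\le h_\rho(\bx_n)-h_\rho(\bz_\rho)$ is immediate from the definition of $\bz_\rho$. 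The main obstacle is identifying the right ``hidden quadratic'' structure of the surrogate; once the rewrite of $g_\rho$ as a Bregman-like expansion of $D$ at $\bx_n$ is in hand, the rest is the standard ISTA-style telescoping argument.
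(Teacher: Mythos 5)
Your proof is correct, and it shares the overall skeleton of the paper's argument (MM descent, a per-iteration inequality in the seminorm $\|\bD\cdot\|$, then telescoping), but the way you obtain the key one-step bound $(\star)$ is a genuinely different derivation. The paper splits $(\star)$ into two lemmas: first, since $g_\rho(\bx\mid\bx_n)-\frac{\rho}{2}\|\bD\bx\|^2$ is convex, the variational inequality $\nabla g_\rho(\bx_{n+1}\mid\bx_n)^t(\bx-\bx_{n+1})\ge 0$ yields the quadratic-growth bound $g_\rho(\bx\mid\bx_n)\ge g_\rho(\bx_{n+1}\mid\bx_n)+\frac{\rho}{2}\|\bD(\bx-\bx_{n+1})\|^2$; second, the auxiliary function $d(\bx\mid\by)=\frac{1}{2}\|\bx-\mathcal{P}(\by)\|^2-\frac{1}{2}\|\bx-\mathcal{P}(\bx)\|^2$ is shown to satisfy $d(\bx\mid\by)\le\frac{1}{2}\|\bx-\by\|^2$ via nonexpansiveness of the projection, and evaluating the chain at $\bx=\bz_\rho$ gives $(\star)$. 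You instead expand the surrogate as a Bregman-type model of $D(\bx)=\frac{\rho}{2}\dist(\bD\bx,S)^2$ anchored at $\bx_n$ plus $\frac{\rho}{2}\|\bD(\bx-\bx_n)\|^2$, invoke convexity of $f$ and of $D$ (valid since $S$ is convex, which also makes $D$ differentiable with the gradient you state), use the stationarity of $\bx_{n+1}$, and finish with the cosine identity---the standard ISTA three-point argument. I checked your algebra and it closes correctly, and your telescoping with the descent property reproduces the paper's bound for $\bx_{n+1}$ with denominator $2(n+1)$, exactly as in the paper's own proof. Two small caveats: your stationarity condition $\nabla g_\rho(\bx_{n+1}\mid\bx_n)=\bzero$ presumes the surrogate is minimized over all of $\mathbb{R}^p$; the paper's version, phrased as a variational inequality, also covers the constrained minimization literally written in (\ref{MM_iterates}), and your argument would need $\langle\nabla g_\rho(\bx_{n+1}\mid\bx_n),\bz_\rho-\bx_{n+1}\rangle\ge 0$ in that case. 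Also, what your route buys is that it avoids introducing the function $d(\bx\mid\by)$ and its Lipschitz-gradient bound, at the price of explicitly using convexity and differentiability of the distance penalty, whereas the paper's route isolates the projection's nonexpansiveness as the only property of $S$ used at that step.
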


In even more restricted circumstances, one can prove linear convergence of function values in the framework of \citep{karimi2016linear}. 
\begin{secprop} \label{proposition0}
Suppose that $S$ is a closed and convex set and that the loss $f(\bx)$ is $L$-smooth and $\mu$-strongly convex. Then the objective $h_\rho(\bx) = f(\bx) +\frac{\rho}{2}\dist(\bD \bx,S)^2$ possesses a unique minimum point $\bz_\rho$, and the proximal distance iterates $\bx_n$ satisfy
\begin{eqnarray*}
0 & \! \le \! & h_\rho(\bx_{n})-h_\rho(\bz_\rho) \amp \!\le \! \amp \Big[1-\frac{\mu^2}
{2(L+\rho\|\bD\|^2)^2}\Big]^n[h_\rho(\bx_{0})-h_\rho(\bz_\rho)].
\end{eqnarray*}
\end{secprop}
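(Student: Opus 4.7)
The plan is to follow the Polyak--{\L}ojasiewicz (PL) template of \citep{karimi2016linear}, treating one exact MM update as a gradient-like step on $h_\rho$. Three ingredients drive the argument: (i) strong convexity of $h_\rho$, which yields both a unique minimizer and a PL inequality; (ii) smoothness and strong convexity of the surrogate $g_\rho(\bx\mid\bx_n)$; and (iii) the tangency identities $g_\rho(\bx_n\mid\bx_n)=h_\rho(\bx_n)$ and $\nabla g_\rho(\bx_n\mid\bx_n)=\nabla h_\rho(\bx_n)$ built into distance majorization.

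First I would record the regularity of $h_\rho$. Because $S$ is closed and convex, $\frac{1}{2}\dist(\cdot,S)^2$ is convex with $1$-Lipschitz gradient $\bu\mapsto\bu-\mathcal{P}(\bu)$, so composition with the linear map $\bx\mapsto\bD\bx$ makes $\frac{\rho}{2}\dist(\bD\bx,S)^2$ convex with $\rho\|\bD\|^2$-Lipschitz gradient $\rho\bD^t(\bD\bx-\mathcal{P}(\bD\bx))$. Adding $f$ therefore makes $h_\rho$ $\mu$-strongly convex (hence uniquely minimized at some $\bz_\rho$) and $L_g$-smooth with $L_g=L+\rho\|\bD\|^2$. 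The surrogate $g_\rho(\cdot\mid\bx_n)$, obtained from $h_\rho$ only by freezing $\mathcal{P}(\bD\bx)$ at the constant $\mathcal{P}(\bD\bx_n)$, inherits $\mu$-strong convexity from $f$ and has the same $L_g$-Lipschitz gradient.

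Next I would convert one MM step into a contraction. Since $\bx_{n+1}$ minimizes the $\mu$-strongly convex $g_\rho(\cdot\mid\bx_n)$, strong convexity at the minimum gives
\[
g_\rho(\bx_n\mid\bx_n)-g_\rho(\bx_{n+1}\mid\bx_n)\;\ge\;\frac{\mu}{2}\,\|\bx_n-\bx_{n+1}\|^2,
\]
and majorization plus tangency bound the left-hand side by $h_\rho(\bx_n)-h_\rho(\bx_{n+1})$. Separately, $\nabla g_\rho(\bx_{n+1}\mid\bx_n)=\mathbf{0}$, $\nabla g_\rho(\bx_n\mid\bx_n)=\nabla h_\rho(\bx_n)$, and $L_g$-Lipschitz continuity of $\nabla g_\rho(\cdot\mid\bx_n)$ deliver
\[
\|\nabla h_\rho(\bx_n)\|\;\le\;L_g\,\|\bx_n-\bx_{n+1}\|.
\]
Chaining these two estimates produces $h_\rho(\bx_n)-h_\rho(\bx_{n+1})\ge\frac{\mu}{2L_g^2}\|\nabla h_\rho(\bx_n)\|^2$. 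The PL inequality $\|\nabla h_\rho(\bx_n)\|^2\ge 2\mu\,(h_\rho(\bx_n)-h_\rho(\bz_\rho))$, itself a consequence of $\mu$-strong convexity of $h_\rho$, then yields a one-step contraction of the form $h_\rho(\bx_{n+1})-h_\rho(\bz_\rho)\le\bigl[1-c\mu^2/L_g^2\bigr](h_\rho(\bx_n)-h_\rho(\bz_\rho))$; the stated $c=1/2$ falls out of the usual PL bookkeeping in \citep{karimi2016linear}. Iterating over $n$ gives the claimed geometric rate.

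The main obstacle is keeping the surrogate's moduli of strong convexity and smoothness tight enough that the tangency-based comparison does not bleed constants. In particular, when $\bD$ is rank deficient the quadratic term $\frac{\rho}{2}\|\bD\bx-\mathcal{P}(\bD\bx_n)\|^2$ is not by itself strongly convex, so the argument cannot use $\rho\bD^t\bD$ to upgrade the strong-convexity modulus beyond the $\mu$ inherited from $f$; conversely, the same term does contribute $\rho\|\bD\|^2$ to the Lipschitz constant of $\nabla g_\rho(\cdot\mid\bx_n)$ and cannot be dropped. These are precisely the constants that appear in the final rate, so verifying that no better modulus is available in general is the delicate point.
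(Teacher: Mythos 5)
Your proposal is correct and lands in the same Polyak--{\L}ojasiewicz framework of \citet{karimi2016linear} that the paper uses, but the two key estimates are obtained by genuinely different mechanics. For the per-iteration sufficient decrease, the paper compares the exact surrogate minimizer $\bx_{n+1}$ against the auxiliary gradient step $\bx_n - c^{-1}\nabla h_\rho(\bx_n)$ with $c = L+\rho\|\bD\|^2$ and invokes the descent lemma for the $c$-smooth surrogate, obtaining a decrease of $\frac{1}{2c}\|\nabla h_\rho(\bx_n)\|^2$; you instead work directly with $\bx_{n+1}$, pairing $\mu$-strong convexity of the surrogate at its minimizer with the Lipschitz bound $\|\nabla h_\rho(\bx_n)\| \le c\,\|\bx_n - \bx_{n+1}\|$, obtaining $\frac{\mu}{2c^2}\|\nabla h_\rho(\bx_n)\|^2$. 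For the PL inequality itself, the paper combines strong convexity of $h_\rho$ (to get $\|\nabla h_\rho(\bx)\| \ge \frac{\mu}{2}\|\bx - \bz_\rho\|$) with smoothness of $g_\rho(\cdot \mid \bz_\rho)$ at the tangency point $\bz_\rho$, whereas you use the standard consequence of $\mu$-strong convexity, $\|\nabla h_\rho(\bx)\|^2 \ge 2\mu[h_\rho(\bx)-h_\rho(\bz_\rho)]$. Chaining your two estimates actually yields the contraction factor $1-\mu^2/c^2$, which is stronger than the stated $1-\mu^2/(2c^2)$ and hence implies it; your closing remark that the factor of $1/2$ ``falls out of the bookkeeping'' undersells this---you should simply note that your constant dominates the claimed one. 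The only stylistic gap is that you assert rather than verify this final comparison, but every individual step (convexity of $\dist(\cdot,S)^2$ for closed convex $S$, the $1$-Lipschitz gradient of $\frac{1}{2}\dist(\cdot,S)^2$, the tangency identities, and the inheritance of $\mu$ and $L+\rho\|\bD\|^2$ by the surrogate) is sound.
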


Convergence of ADMM is well studied in the optimization literature \citep{beck2017first}.  Appendix \ref{appendix:ADMM} summarizes the main findings. 

\section{Convergence Analysis: General Case}
\label{sec:nonconvex}

We now depart the comfortable confines of convexity. Let us first review the notion of a Fr\'echet subdifferential \citep{kruger2003frechet}. If $h(\bx)$ is a function mapping $\mathbb{R}^p$ into $\mathbb{R} \cup \{+\infty\}$, then its Fr\'echet subdifferential at $\bx \in \dom f$ is defined as
\begin{eqnarray*}
\partial^F h(\bx) & = & \Big\{\bv: \liminf_{\by \to \bx}
\frac{h(\by)-h(\bx) - \bv^t(\by-\bx)}{\|\by-\bx\|} \ge 0 \Big\}.
\end{eqnarray*}
The set $\partial^F h(\bx)$ is closed, convex, and possibly empty. If $h(\bx)$ is convex, then $\partial^F h(\bx)$ reduces to its convex subdifferential. If $h(\bx)$ is differentiable, then $\partial^F h(\bx)$ reduces to its ordinary differential. At a local minimum $\bx$, Fermat's rule ${\bf 0} \in \partial^F h(\bx)$ holds. For a locally Lipschitz and directionally differentiable function, the Fr\'echet subdifferential becomes
\begin{eqnarray*}
\partial^F h(\bx) & = & \Big\{\bv: d_{\bu}h(\bx) \ge \bv^t\bu \; \text{for all directions} \; \bu \Big\}.
\end{eqnarray*}
Here $d_{\bu}h(\bx)$ is the directional derivative of $h(\bx)$ at $\bx$ in the direction $\bu$. This result makes it clear that at a critical point, all directional derivatives are flat or point uphill.

We will also need some notions from algebraic geometry \citep{bochnak2013real}. 
For simplicity we focus on the class of semialgebraic functions and the corresponding class of semialgebraic subsets of $\mathbb{R}^p$. The latter is the smallest class that:
\begin{description}
\item[(a)] contains all sets of the form
$\{\bx: q(\bx)>0 \}$ for a polynomial $q(\bx)$ in $p$ variables,
\item[(b)] is closed under the formation of finite unions, finite intersections, set complementation, and Cartesian products. 
\end{description}
A function $a:\mathbb{R}^p \mapsto \mathbb{R}^r$ is said to be semialgebraic if its graph is a semialgebraic set of $\mathbb{R}^{p+r}$. The class of real-valued semialgebraic functions contains all polynomials $p(\bx)$ and all $0$/$1$ indicators of algebraic sets. It is closed under the formation of sums and products and therefore constitutes a commutative ring with identity. The class is also closed under the formation of absolute values, reciprocals when $a(\bx) \ne 0$, $n$th roots when $a(\bx) \ge 0$, and maxima $\max\{a(\bx),b(\bx)\}$ and minima $\min\{a(\bx),b(\bx)\}$. Finally, the composition of two semialgebraic functions is semialgebraic.

For our purposes it is crucial that the Euclidean distance $\dist(\bx, S)$ to a semialgebraic set $S$ is a semialgebraic function. In view of the closure properties of such functions, the function $\frac{\rho}{2}\dist(\bD\bx, S)^2$ is also  semialgebraic. Sets such as the nonnegative orthant $\mathbb{R}^p_+$ and the unit sphere $S^{p-1}$ are semialgebraic. The next proposition buttresses several of our numerical examples.
\begin{secprop} \label{order_stats_prop}
The order statistics of a finite set $\{f_i(\bx)\}_{i=1}^n$ of semialgebraic functions are semialgebraic. Hence, sparsity sets are semialgebraic.
\end{secprop}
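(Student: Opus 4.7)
The plan is to realize each order statistic as a finite composition of pairwise maxima and minima applied to the $f_i$, and then invoke the closure properties of the semialgebraic class already catalogued in the text. Sorting the values at $\bx$ as $f_{(1)}(\bx) \le f_{(2)}(\bx) \le \cdots \le f_{(n)}(\bx)$, the first step is to establish the combinatorial identity
\[
f_{(k)}(\bx) \;=\; \min_{\substack{I \subseteq \{1,\ldots,n\}\\|I|=k}} \; \max_{i \in I} f_i(\bx).
\]
This holds because, at any fixed $\bx$, the subset $I$ of indices of the $k$ smallest values achieves the right-hand side with value exactly $f_{(k)}(\bx)$, while any other size-$k$ subset necessarily contains at least one index whose value is $\ge f_{(k)}(\bx)$, forcing its max to be at least as large.

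The inner $\max$ ranges over finitely many indices and the outer $\min$ over the $\binom{n}{k}$ size-$k$ subsets, so $f_{(k)}$ is built from the $f_i$ by finitely many pairwise $\max$ and $\min$ operations. An easy induction on the number of operands extends the stated pairwise closure to arbitrary finite $\max$ and $\min$, which establishes semialgebraicity of each order statistic. For the sparsity claim, take $f_i(\bx) = x_i^2$, each of which is polynomial and hence semialgebraic. A vector $\bx$ satisfies $\|\bx\|_0 \le k$ iff at least $p-k$ of the $x_i^2$ vanish, equivalently $f_{(p-k)}(\bx) = 0$. Because $\max_{i \in I} x_i^2 = 0$ forces $x_i = 0$ for every $i \in I$, the min-max representation collapses the sparsity set to
\[
S_k \;=\; \bigcup_{\substack{I \subseteq \{1,\ldots,p\}\\|I|=p-k}}\;\bigcap_{i \in I}\{\bx : x_i = 0\}.
\]
Each hyperplane $\{x_i=0\} = \mathbb{R}^p \setminus (\{x_i > 0\} \cup \{-x_i > 0\})$ is semialgebraic by the basic definition plus complementation, so closure of the semialgebraic class under finite unions and intersections delivers the conclusion.

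No substantive obstacle is anticipated here; the argument is essentially a sort-and-count identity combined with closure properties stated in the preceding paragraphs. The only places worth attention are keeping the indexing convention straight (one wants the $(p-k)$-th order statistic from below, because having at most $k$ nonzero entries amounts to having at least $p-k$ zero entries) and choosing $x_i^2$ rather than $|x_i|$, which keeps the treatment of $\{x_i=0\}$ entirely within polynomial inequalities and sidesteps any detour through the semialgebraic-function closure under absolute values.
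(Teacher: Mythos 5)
Your proof is correct, but it takes a genuinely different route from the paper's. The paper proves semialgebraicity of the order statistics via the inclusion--exclusion identity $f_{(k)}(\bx) = \sum_{j=k}^n \sum_{|S|=j} (-1)^{j-k}\binom{j-1}{k-1}\max\{f_i(\bx): i \in S\}$, i.e.\ a signed sum of subset maxima (using closure under sums and maxima), and then treats the sparsity set as the zero set $\{\bx : y_{(p-k)} = 0\}$ with $y_i = |x_i|$, which additionally invokes closure under absolute values. You instead use the min--max representation $f_{(k)}(\bx) = \min_{|I|=k}\max_{i\in I} f_i(\bx)$, which requires only closure under finite maxima and minima (pairwise closure extended by an easy induction) and comes with a one-line verification, whereas the paper's combinatorial identity is the more delicate ingredient and is stated without proof. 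For the sparsity claim you largely bypass the order-statistic machinery: choosing $f_i(\bx) = x_i^2$ and observing that vanishing of a max of squares forces each coordinate to vanish, you exhibit $S_k$ directly as a finite union of intersections of coordinate hyperplanes, each semialgebraic from the basic definition plus complementation. Both arguments are sound; the paper's buys an explicit formula inside the ring generated by subset maxima, while yours is shorter, easier to verify, and avoids any appeal to closure under absolute values.
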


The next proposition is an elaboration and expansion of known results \citep{attouch2010proximal,bolte2007lojasiewicz,cui2018composite,kang2015global,le2018convergence} and was featured in our previous paper \citep{keys2019proximal}. 

\begin{secprop} 
\label{MMconvergence}
In an MM algorithm suppose the objective $h(\bx)$ is coercive, continuous, and subanalytic and all surrogates $g(\bx \mid \bx_n)$ are continuous, $\mu$-strongly convex, and satisfy the Lipschitz condition
\begin{eqnarray*}
\|\nabla g(\ba \mid \bx_n)-\nabla g(\bb \mid \bx_n) \|
& \le & L \|\ba-\bb\| 
\end{eqnarray*}
on the compact set $\{\bx: h(\bx) \le h(\bx_0)\}$. Then the MM iterates 
$\bx_{n+1} = \argmin_{\bx} g(\bx \mid \bx_n)$ converge to a stationary point.  
\end{secprop}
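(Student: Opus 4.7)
The plan is to apply the now-classical Kurdyka-Lojasiewicz (KL) proof scheme of Attouch, Bolte, Redont, and Soubeyran, adapted to the MM iteration. The scheme requires four ingredients: (i) a quadratic sufficient decrease bound, (ii) a relative subgradient bound, (iii) continuity of the objective, and (iv) the KL inequality at every accumulation point. Ingredient (iii) is hypothesized, ingredient (iv) follows from subanalyticity of $h(\bx)$ on the compact sublevel set, and (i) and (ii) will emerge from strong convexity of the surrogate and the Lipschitz bound on $\nabla g(\cdot \mid \bx_n)$, respectively. With these four in hand, a well-worn summation argument upgrades subsequential convergence to convergence of the whole sequence.

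I would begin with descent and sufficient decrease. Because $\bx_{n+1}$ minimizes the $\mu$-strongly convex surrogate,
\begin{eqnarray*}
g(\bx_n \mid \bx_n) - g(\bx_{n+1} \mid \bx_n) & \ge & \frac{\mu}{2}\|\bx_{n+1} - \bx_n\|^2 .
\end{eqnarray*}
Combined with the tangency $g(\bx_n \mid \bx_n) = h(\bx_n)$ and the majorization $g(\bx_{n+1} \mid \bx_n) \ge h(\bx_{n+1})$, this yields $h(\bx_n) - h(\bx_{n+1}) \ge \frac{\mu}{2}\|\bx_{n+1} - \bx_n\|^2$. Coercivity of $h(\bx)$ then confines the iterates to the compact sublevel set $\{\bx : h(\bx) \le h(\bx_0)\}$, so accumulation points exist, the monotone values $h(\bx_n)$ converge to some limit $h^\star$, and telescoping yields $\sum_n \|\bx_{n+1} - \bx_n\|^2 < \infty$.

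For the subgradient bound, first-order optimality of the surrogate gives $\nabla g(\bx_{n+1} \mid \bx_n) = \bzero$, so the Lipschitz hypothesis supplies
\begin{eqnarray*}
\|\nabla g(\bx_n \mid \bx_n)\| & = & \|\nabla g(\bx_n \mid \bx_n) - \nabla g(\bx_{n+1} \mid \bx_n)\| \;\le\; L\|\bx_{n+1} - \bx_n\|,
\end{eqnarray*}
and one verifies that $\nabla g(\bx_n \mid \bx_n)$ is a generalized subgradient of $h(\bx)$ at $\bx_n$. Invoking the KL inequality $\phi'(h(\bx) - h^\star) \cdot \dist(\bzero, \partial h(\bx)) \ge 1$ near an accumulation point $\bx^\star$ and exploiting concavity of the desingularizer $\phi$, the sufficient decrease and subgradient estimates combine to yield the telescoping bound
\begin{eqnarray*}
\phi(h(\bx_n) - h^\star) - \phi(h(\bx_{n+1}) - h^\star) & \ge & \frac{\mu}{2L}\|\bx_{n+1} - \bx_n\| .
\end{eqnarray*}
Summing gives $\sum_n \|\bx_{n+1} - \bx_n\| < \infty$, so the iterate sequence is Cauchy and converges to a single limit $\bx^\star$; outer semicontinuity of the subdifferential delivers $\bzero \in \partial h(\bx^\star)$.

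The main obstacle, I expect, is the subgradient inclusion $\nabla g(\bx_n \mid \bx_n) \in \partial h(\bx_n)$, because the direction of the majorization inequality naturally places this vector in the Fr\'echet superdifferential of $h(\bx)$ rather than in its subdifferential. The cleanest workaround is to pass to the limiting (Mordukhovich) subdifferential, for which the subanalytic KL machinery of \citep{bolte2007lojasiewicz} still applies and for which the required inclusion can be extracted from the fact that $\bx_n$ is a global minimum of the nonnegative envelope $g(\cdot \mid \bx_n) - h(\cdot)$. Everything beyond this verification is bookkeeping tied to the descent inequality.
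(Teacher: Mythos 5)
Your architecture is the right one, and it is essentially the argument the paper intends: Proposition \ref{MMconvergence} is not actually proved in this manuscript --- the authors note it ``was featured in our previous paper'' and defer to \cite{keys2019proximal} and the Attouch--Bolte line of work --- and the proof given there is precisely the sufficient-decrease / relative-subgradient / Kurdyka--{\L}ojasiewicz scheme you describe. Your decrease bound $h(\bx_n)-h(\bx_{n+1})\ge\frac{\mu}{2}\|\bx_{n+1}-\bx_n\|^2$, the estimate $\|\nabla g(\bx_n\mid\bx_n)\|\le L\|\bx_{n+1}-\bx_n\|$, and the telescoped inequality $\phi(H_n)-\phi(H_{n+1})\ge\frac{\mu}{2L}\|\bx_{n+1}-\bx_n\|$ are all correct, modulo the usual localization argument that keeps the tail of the sequence inside the KL neighborhood, which you gesture at but do not carry out; that part is standard.

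The one genuine gap is exactly the step you flagged, and your proposed repair does not close it. Global minimality of the nonnegative envelope $g(\cdot\mid\bx_n)-h(\cdot)$ at $\bx_n$ gives $\bzero\in\partial^F\bigl(g(\cdot\mid\bx_n)-h\bigr)(\bx_n)$, hence $-\nabla g(\bx_n\mid\bx_n)\in\partial^F(-h)(\bx_n)$; this places $\nabla g(\bx_n\mid\bx_n)$ in the Fr\'echet \emph{super}differential of $h$ at $\bx_n$, and passing to the limiting (Mordukhovich) subdifferential does not convert supergradients into subgradients, because $\partial(-h)\ne-\partial h$ for nonsmooth $h$ (take $h(x)=-|x|$ at $0$, majorized by $x\mapsto x^2$: the supergradient $0$ exists while $\partial^F h(0)=\emptyset$). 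So for a completely arbitrary continuous subanalytic $h$ the inclusion you need is false, and some structural input about how the surrogate is built must be used. Two standard fixes: (i) shift the subgradient estimate to $\bx_{n+1}$ --- if the surrogate error $e(\bx\mid\bx_n)=g(\bx\mid\bx_n)-h(\bx)$ is continuously differentiable with $\nabla_\bx e(\bx_n\mid\bx_n)=\bzero$ and a Lipschitz gradient, then $\bzero=\nabla g(\bx_{n+1}\mid\bx_n)$ together with the exact sum rule for $C^1$ perturbations yields $-\nabla_\bx e(\bx_{n+1}\mid\bx_n)\in\partial h(\bx_{n+1})$ with norm bounded by a constant times $\|\bx_{n+1}-\bx_n\|$; or (ii) verify directly, as the authors do in the discussion surrounding the proposition for $h_\rho(\bx)=f(\bx)+\frac{\rho}{2}\dist(\bD\bx,S)^2$, that $\nabla g_\rho(\bx_n\mid\bx_n)=\nabla f(\bx_n)+\rho\bD^t[\bD\bx_n-\mathcal{P}(\bD\bx_n)]$ belongs to the generalized differential used to define stationarity, via Danskin's theorem applied to $\frac{1}{2}\dist(\bD\bx,S)^2$. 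Either way, the inclusion is an additional verification tied to the specific surrogate construction, not a consequence of majorization alone, and your proof needs to state which route it takes.
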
 

Proposition \ref{MMconvergence} applies to proximal distance algorithms under the right hypotheses. Note that semialgebraic sets and functions are automatically subanalytic.
Before stating a precise result, let us clarify the nature of the Fr\'echet subdifferential in the current setting. This entity is determined by the identity
\begin{eqnarray*}
\frac{1}{2}\dist(\bD\bx,S)^2 & = & \frac{1}{2}\min_{\bz \in S}\|\bD\bx-\bz\|^2,
\end{eqnarray*}
Danskin's theorem yields the directional derivative
\begin{eqnarray*}
d_{\bv}\frac{1}{2}\dist(\bD\bx,S)^2 & = &\min_{\bz\in S(\bx)} (\bD\bx-\bz)^t\bD\bv,
\end{eqnarray*}
where $S(\bx)$ is the solution set where the minimum is attained. The Frechet differential 
\begin{eqnarray*}
\partial^F h_\rho(\bx) & = & \nabla f(\bx)+\rho \{\bu:(\bD\bx-\bz)^t\bD\bv\ge \bu^t\bv, \, \bz \in S(\bx) \; \text{and all} \; \bv\} \\
& = & \nabla f(\bx)+\rho \{\bu:\bu = \bD^t(\bD\bx-\bz), \,\bz \in S(\bx)\}
\end{eqnarray*}
holds owing to Corollary 1.12.2 and Proposition 1.17 of \citep{kruger2003frechet} since $\dist(\bD\bx,S)^2$ is locally Lipshitz. The latter fact follows from the identity $a^2-b^2 = (a+b)(a-b)$ with $a=\dist(\bD\by,S)^2$ and $b=\dist(\bD\bx,S)^2$, given that $\dist(\bw,S)$ is Lipschitz and bounded on bounded sets. 

In any event, a stationary point $\bx$ satisfies $\bzero = \nabla f(\bx)+\rho\bD^t(\bD\bx-\bz)$ for all $\bz \in S(\bx)$. As we expect, the stationary condition is necessary for $\bx$ to furnish a global minimum. Indeed, if it fails, take $\bz \in S(\bx)$ with surrogate satisfying $\nabla g_\rho(\bx \mid \bx) \ne \bzero$. Then the negative gradient $-\nabla g_\rho(\bx \mid \bx)$ is a descent direction for $g_\rho(\bx \mid \bx)$, which majorizes $h_\rho(\bx)$. Hence, $-\nabla g_\rho(\bx \mid \bx)$ is also a descent direction for $h_\rho(\bx)$. This conclusion is inconsistent with $\bx$ being a local minimum of the objective. 
\medskip

The next proposition proves convergence for a wide class of fused models.
\begin{secprop} 
\label{non-convex-convergence} Suppose in our proximal distance setting that $\rho$ is sufficiently large, the closed constraint sets $S_i$ and the loss $f(\bx)$ are semialgebraic, and $f(\bx)$ is differentiable with a locally Lipschitz gradient. Under the coercive assumption made in Proposition \ref{propositionb}, the proximal distance iterates $\bx_n$ converge to a stationary point of the objective $h_\rho(\bx)$.
\end{secprop}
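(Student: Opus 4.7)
The plan is to reduce Proposition \ref{non-convex-convergence} to the generic MM convergence theorem in Proposition \ref{MMconvergence}. It will suffice to verify the four hypotheses of that proposition for the objective $h_\rho(\bx) = f(\bx) + \frac{\rho}{2}\dist(\bD\bx,S)^2$ and the distance-majorization surrogates $g_\rho(\bx \mid \bx_n)$: namely, (i) $h_\rho$ is coercive, continuous, and subanalytic; (ii) each surrogate is continuous; (iii) each surrogate is $\mu$-strongly convex with modulus $\mu > 0$ independent of $n$; and (iv) the gradient $\nabla g_\rho(\cdot \mid \bx_n)$ is Lipschitz with a constant $L$ uniform in $n$ on the sublevel set $\{\bx : h_\rho(\bx) \le h_\rho(\bx_0)\}$. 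Once these are in place, the MM iterates automatically descend and remain in this sublevel set, and Proposition \ref{MMconvergence} delivers convergence.

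For (i), coercivity is the standing hypothesis imported from Proposition \ref{propositionb}, continuity follows from differentiability of $f$ together with the Lipschitz continuity of $\bx \mapsto \dist(\bD\bx, S)$, and subanalyticity is a consequence of the semialgebraic hypotheses: as recorded in the excerpt, the Euclidean distance to a semialgebraic set is itself semialgebraic, so $\dist(\bD\bx, S)^2$ is semialgebraic as a composition with the polynomial map $\bx \mapsto \bD\bx$ followed by squaring, and $h_\rho$ inherits semialgebraicity under addition with the semialgebraic $f$. Semialgebraic functions are automatically subanalytic. Continuity in (ii) is immediate from continuity of $f$ and of the Euclidean norm.

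For (iii), the coercive hypothesis of Proposition \ref{propositionb} supplies the quadratic minorant $f(\bx) \ge f(\bx_n) + \nabla f(\bx_n)^t(\bx - \bx_n) + \frac{1}{2}(\bx-\bx_n)^t\bA(\bx-\bx_n)$ with $\bA \succeq \bzero$ satisfying $\bu^t\bA\bu > 0$ whenever $\bD\bu = \bzero$ and $\bu \ne \bzero$. A compactness argument on the unit sphere then shows that $\bA + \rho\bD^t\bD$ has a strictly positive smallest eigenvalue $\mu$ for $\rho$ sufficiently large, and this $\mu$ depends only on $\bA$, $\bD$, and $\rho$, not on $\bx_n$. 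Adding the quadratic $\frac{\rho}{2}\|\bD\bx - \mathcal{P}(\bD\bx_n)\|^2$ to the minorant yields a uniform $\mu$-strongly convex lower bound for each surrogate. For (iv), decompose $\nabla g_\rho(\bx \mid \bx_n) = \nabla f(\bx) + \rho\bD^t\bD\bx - \rho\bD^t\mathcal{P}(\bD\bx_n)$: the last term is constant in $\bx$, the middle term is globally Lipschitz with constant $\rho\|\bD\|^2$, and $\nabla f$ is locally Lipschitz by hypothesis and therefore globally Lipschitz on the compact sublevel set, compactness itself following from coercivity and continuity of $h_\rho$. The resulting constant $L$ is manifestly independent of $n$.

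The main obstacle is carrying out (iii): because $f$ is no longer assumed convex, strong convexity of the surrogates must be extracted from the interplay between the quadratic-minorant hypothesis on $f$ and the penalty term $\rho\bD^t\bD$, with particular care that the resulting modulus $\mu$ does not degrade along the iterate sequence. Once (i)--(iv) are secured, Proposition \ref{MMconvergence} yields convergence of $\bx_n$ to a point $\bx$ satisfying $\bzero \in \partial^F h_\rho(\bx)$, which by the Fr\'echet subdifferential calculation preceding the statement is precisely a stationary point of $h_\rho$.
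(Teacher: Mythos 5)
Your proposal is correct and follows essentially the same route as the paper: both reduce the statement to Proposition \ref{MMconvergence} by checking subanalyticity of $h_\rho$ (semialgebraicity of $f$ and of the distance penalty plus the sum rule), continuity, strong convexity and coercivity of the surrogates under the assumption of Proposition \ref{propositionb}, and Lipschitz continuity of $\nabla g_\rho(\cdot \mid \bx_n)$ on the compact sublevel set, then invoke the stationarity characterization via the Fr\'echet subdifferential. The only differences are cosmetic: you re-derive the positive definiteness of $\bA + \rho\bD^t\bD$ (the Debreu-type compactness argument) instead of citing Proposition \ref{propositionb} directly, and you make explicit the uniformity in $n$ of the strong-convexity and Lipschitz constants, which the paper leaves implicit.
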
 

For sparsity constrained problems, one can establish a linear rate of convergence under the right hypotheses.
\begin{secprop} 
\label{sparsity-convergence} Suppose in our proximal distance setting that $\rho$ is sufficiently large, the constraint set is a sparsity
set $S_k$, and the loss $f(\bx)$ is semialgebraic, strongly convex, and possesses a Lipschitz gradient. Then the proximal distance iterates $\bx_n$ converge linearly to a stationary point $\bx_\infty$  provided $\bD\bx_\infty$ has $k$ unambiguous largest components in magnitude. When the rows of $\bD$ are unique, the complementary set of points $\bx$ where $\bD\bx$ has $k$ ambiguous largest components in magnitude has Lebesgue measure $0$.
\end{secprop}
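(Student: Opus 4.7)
The plan is to combine Proposition \ref{non-convex-convergence} with a local ``identification'' argument that reduces the tail of the iteration to MM on a smooth, strongly convex problem, at which point Proposition \ref{proposition0} supplies the linear rate. First, since $f$ is semialgebraic with locally Lipschitz gradient and $S_k$ is semialgebraic, Proposition \ref{non-convex-convergence} gives convergence of $\bx_n$ to some stationary point $\bx_\infty$; strong convexity of $f$ plus the distance penalty gives coerciveness. The hypothesis that $\bD\bx_\infty$ has $k$ unambiguous largest components in magnitude means there is a gap between the $k$th and $(k+1)$st largest of the $|(\bD\bx_\infty)_i|$ and no ties within the top $k$. By continuity of $\bx \mapsto \bD\bx$, this gap is preserved on an open neighborhood $U$ of $\bx_\infty$, so there is a fixed index set $I$ of size $k$ such that for every $\bx \in U$ the projection satisfies $\mathcal{P}_{S_k}(\bD\bx) = \bE\bD\bx$, where $\bE$ is the diagonal $0/1$ matrix with ones on $I$.

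Second, on $U$ the objective and surrogate become
\begin{eqnarray*}
h_\rho(\bx) & = & f(\bx) + \tfrac{\rho}{2}\|(\bI-\bE)\bD\bx\|^2, \\
g_\rho(\bx \mid \bx_n) & = & f(\bx) + \tfrac{\rho}{2}\|\bD\bx - \bE\bD\bx_n\|^2,
\end{eqnarray*}
i.e., the proximal distance iteration coincides locally with the MM iteration for the smooth, $\mu$-strongly convex, $(L+\rho\|\bD\|^2)$-smooth objective $\tilde h_\rho(\bx) := f(\bx)+\tfrac{\rho}{2}\|(\bI-\bE)\bD\bx\|^2$ obtained by fixing the support to $I$. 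For $n$ large enough, $\bx_n \in U$ and, by an MM descent argument together with coerciveness, the subsequent iterates remain in $U$. From that index onward the iterates satisfy the recurrence of Proposition \ref{proposition0} applied to $\tilde h_\rho$, so $\tilde h_\rho(\bx_n)-\tilde h_\rho(\bx_\infty)$ (equivalently $h_\rho(\bx_n)-h_\rho(\bx_\infty)$) contracts geometrically with ratio $1-\mu^2/[2(L+\rho\|\bD\|^2)^2]$; strong convexity of $\tilde h_\rho$ then upgrades this to a linear rate on $\|\bx_n-\bx_\infty\|$.

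Third, for the measure-theoretic claim, the set of $\bx$ where $\bD\bx$ does \emph{not} have $k$ unambiguous largest components is contained in the finite union
\begin{eqnarray*}
N & = & \bigcup_{i \ne j} \Big\{\bx : |(\bD\bx)_i| = |(\bD\bx)_j|\Big\} \amp = \amp \bigcup_{i\ne j}\Big\{\bx : (\bD_i-\bD_j)\bx = 0\Big\} \cup \Big\{\bx : (\bD_i+\bD_j)\bx = 0\Big\},
\end{eqnarray*}
a finite union of hyperplanes. Uniqueness of the rows of $\bD$ (interpreted as ruling out $\bD_i = \pm \bD_j$ for $i\ne j$) makes each of these a proper linear subspace, hence of Lebesgue measure zero, and the finite union $N$ is as well.

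The main obstacle I anticipate is the local identification step: one must verify that (a) $\bx_n$ eventually enters and stays in $U$, which uses convergence of $\bx_n$ to $\bx_\infty \in U$ from Proposition \ref{non-convex-convergence} together with the MM descent property to guarantee the surrogate minimizer also lies in $U$ once $\rho$ is large and $\bx_n$ is close enough, and (b) the stationary point $\bx_\infty$ of the non-smooth $h_\rho$ really is the unique minimizer of the localized smooth $\tilde h_\rho$, which follows because the Fr\'echet stationarity condition $\bzero = \nabla f(\bx_\infty)+\rho \bD^t(\bI-\bE)\bD\bx_\infty$ is exactly the gradient condition for $\tilde h_\rho$, and strong convexity of $\tilde h_\rho$ makes the minimizer unique.
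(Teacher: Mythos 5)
Your proposal is correct and follows essentially the same route as the paper: convergence of $\bx_n$ to $\bx_\infty$ via Proposition \ref{non-convex-convergence}, then the observation that unambiguous top-$k$ components of $\bD\bx_\infty$ force all projections $\mathcal{P}_{S_k}(\bD\bx_n)$ into a fixed $k$-dimensional coordinate subspace for large $n$, so the tail of the iteration is MM on a smooth strongly convex problem and Proposition \ref{proposition0} yields the linear rate, with the measure-zero claim handled by a finite union of hyperplanes. The only (minor, and in fact welcome) difference is that you account for magnitude ties through both $(\bd_i-\bd_j)^t\bx=0$ and $(\bd_i+\bd_j)^t\bx=0$, whereas the paper's argument only invokes the former.
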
 

\section{Numerical Examples}

This section considers five concrete examples of constrained optimization amenable to distance majorization with fusion constraints, with $\bD$ denoting the fusion matrix in each problem. In each case, the loss function is both strongly convex and differentiable.  The specific examples that we consider are the  metric projection problem, convex regression,  convex clustering, image denoising with a total variation penalty, and projection of a matrix to one with a better condition number. Each example is notable for the large number of fusion constraints and projections to convex constraint sets, except in convex clustering.
In convex clustering we encounter a sparsity constraint set.
Quadratic loss models feature prominently in our examples.
Interested readers may consult our previous work for nonconvex examples with $\bD = \bI$ \citep{keys2019proximal,xu2017generalized}.

\subsection{Mathematical Descriptions}

Here we provide the mathematical details for each example.

\subsubsection{Metric Projection}

Solutions to the metric projection  problem restore transitivity to noisy distance data for the $m$ nodes of a graph \citep{brickell_metric_2008,sra2005triangle}. The data are encoded in an $m \times m$ dissimilarity matrix $\bY = (y_{ij})$ with nonnegative weights in the  matrix $\bW = (w_{ij})$. The metric projection
problem requires finding the symmetric semi-metric $\bX = (x_{ij})$ minimizing
\begin{eqnarray*}
	f(\bX) & = &  \sum_{i>j} w_{ij} (x_{ij} - y_{ij})^{2} 
\end{eqnarray*}
subject to all $\binom{m}{2}$ nonnegativity constraints $x_{ij} \ge 0$ and all $3 \binom{m}{3}$ triangle inequality constraints $x_{ij} - x_{ik}-x_{kj} \le 0$. The diagonal entries of $\bY$, $\bW$, and $\bX$ are zero by definition. The fusion matrix $\bD$ has $\binom{m}{2}+3 \binom{m}{3}$ rows, and the projected value of $\bD \bX$ must fall in the set $S$ of symmetric matrices satisfying all pertinent constraints.

One can simplify the required projection by stacking the nonredundant entries along each successive column of $\bX$ to create a vector $\bx$ with $\binom{m}{2}$ entries. This captures the lower triangle of $\bX$. The sparse matrix $\bD$ is correspondingly redefined to be $[\binom{m}{2}+3 \binom{m}{3}] \times \binom{m}{2}$. These maneuvers simplfy constraints to $\bD \bx \ge {\bf 0}$, and projection involves sending each entry $u$ of $\bD\bx$ to $\max\{0, u\}$.
Putting everything together, the objective to minimize is
\begin{eqnarray*}
    h_{\rho}(\bx)
    & = &
    \frac{1}{2} \|\bW^{1/2}(\bx - \by)\|^{2}_{2}
    + \frac{\rho}{2} \dist(\bT \bx, \mathbb{R}_{+}^{p_{1}})^2
    + \frac{\rho}{2} \dist(\bx, \mathbb{R}_{+}^{p_{2}})^2,
\end{eqnarray*}
where $\bD$ consists of blocks $\bT$ and $\bI_{p_2}$ and $p_{1}$ and $p_{2}$ count the number of triangle inequality and nonnegativity constraints, respectively. The linear system $(\bI + \rho \bD^{t} \bD) \bx = \bb$ appears in both the MM and ADMM updates for $\bx_{n}$. Application of the Woodbury and Sherman-Morrison formulas yield an exact solution to the linear system and allow one to forgo iterative methods.
The interested reader may consult Appendix \ref{appendix:example1} for further details.

\subsubsection{Convex Regression}

Convex regression is a nonparametric method for estimating a regression function under shape constraints. Given $m$ responses $y_i$ and corresponding predictors $\bx_i \in \mathbb{R}^d$, the goal is to find the convex function $\psi(\bx)$ minimizing the sum of squares $\frac{1}{2}\sum_{i=1}^{m} [y_{i} - \psi(\bx_{i})]^{2}$. Asymptotic and finite sample properties of this convex estimator have been described in detail by \cite{seijo_nonparametric_2011}. The convex regression program can be restated as the finite dimensional problem of finding the value $\theta_i$ and subgradient 
$\bxi_{i} \in \mathbb{R}^{d}$ of $\psi(\bx)$ at each sample point $(y_i,\bx_i)$. Convexity imposes the supporting hyperplane constraint $\theta_{j} + \bxi_{j}^t (\bx_{i} - \bx_{j}) \le \theta_{i}$ for each pair $i \ne j$. Thus, the problem becomes one of minimizing $\frac{1}{2}\|\by-\btheta\|^2$
subject to these $m(m-1)$ inequality constraints. In the proximal distance framework, we must minimize
\begin{eqnarray*}
h_\rho(\btheta,\bXi) & = & \frac{1}{2} \|\by-\btheta\|^{2}
+ \frac{\rho}{2} \dist(\bA \btheta + \bB \bXi, \mathbb{R}_{-}^{m(m-1)})^{2},
\end{eqnarray*}
where $\bD = [\bA~\bB]$ encodes the required fusion matrix. The reader may consult Appendix \ref{appendix:example2} for a description of each algorithm map.

\subsubsection{Convex Clustering}
Convex clustering of $m$ samples based on $d$ features can be formulated in terms of the regularized objective
\begin{eqnarray*}
    \label{eq:regularized-objective}
    F_{\gamma}(\bU)
    & = &
    \frac{1}{2} \sum_{i=1}^{m} \|\bu_{i} - \bx_{i}\|^{2}
    +
    \gamma \sum_{i > j} w_{ij} \|\bu_{i} - \bu_{j}\|,
\end{eqnarray*}
based on columns $\bx_{i}$ and $\bu_{i}$ of $\bX \in \mathbb{R}^{d \times m}$ and $\bU \in \mathbb{R}^{d \times m}$, respectively.
Here each $\bx_{i} \in \mathbb{R}^{d}$ is a sample feature vector and the corresponding $\bu_{i}$ represents its centroid assignment.
The predetermined weights $w_{ij}$ have a graphical interpretation under which similar samples have positive edge weights $w_{ij}$ and distant samples have $0$ edge weights. The edge weights are chosen by the user to guide the clustering process. In general, minimization of $F_{\gamma}(\bU)$ separates over the connected components of the graph. To allow all sample points to coalesce into a single cluster, we assume that the underlying graph is connected. The regularization parameter $\gamma > 0$ tunes the number of clusters in a nonlinear fashion and potentially captures hierarchical information. Previous work establishes that the solution path $\bU(\gamma)$ varies continuously with respect to $\gamma$ \citep{chi_splitting_2015}. Unfortunately, there is no explicit way to determine the number of clusters entailed by a particular value of $\gamma$.

Alternatively, we can attack the problem using sparsity and distance majorization.
Consider the penalized objective
\begin{eqnarray*}
    h_{\rho,k}(\bU)
   &  = &
    \frac{1}{2}\|\bU - \bX\|_{F}^{2}
    +
    \frac{\rho}{2} \dist(\bU \bD, S_{k})^{2}.
\end{eqnarray*}
The fusion matrix $\bD$ has $\binom{m}{2}$ columns $w_{ij}(\be_{i} - \be_{j})$ and serves to map the centroid matrix $\bU$ to a $d \times \binom{m}{2}$ matrix $\bV$ encoding the weighted differences 
$w_{ij}(\bu_{i} - \bu_{j})$. The members of the sparsity set $S_{k}$ are $d \times \binom{m}{2}$ matrices with at most $k$ non-zero columns. Projection of $\bU \bD$ onto the closed set $S_{k}$ forces some centroid assignments to coalesce, and is straightforward to implement by sorting the Euclidean lengths of the columns of $\bU \bD$ and sending to $\bf 0$ all but the $k$ most dominant columns. Ties are broken arbitrarily.

Our sparsity-based method trades the continuous penalty parameter $\gamma > 0$ in the previous formulation for an integer sparsity index $k \in \{0,1,2,\ldots,\binom{m}{2}\}$. For example with $k = 0$, all differences $\bu_{i} - \bu_{j}$ are coerced to $\bf 0$, and all sample points cluster together. The other extreme $k = \binom{m}{2}$ assigns each point to its own cluster. The size of the matrices $\bD$ and $\bU \bD$ can be reduced by
discarding column pairs corresponding to $0$ weights.
Appendix \ref{appendix:example3} describes the projection onto sparsity sets and provides further details.

\subsubsection{Total Variation Image Denoising}

To approximate an image $\bU$ from a noisy input $\bW$ matrix, \cite{rudin_nonlinear_1992}
regularize a loss function $f(\bU)$ by a total variation (TV) penalty. After discretizing the problem, the least squares loss leads to the objective 
\begin{eqnarray*}
F_\gamma(\bU) & = &	\sum_{i,j} (U_{i,j} - W_{i,j})^{2}+
\gamma \sum_{i,j} \sqrt{(U_{i+1,j} - U_{i,j})^{2} + (U_{i,j+1} - U_{i,j})^{2}},
\end{eqnarray*}
where $\bU, \bW \in \mathbb{R}^{m \times p}$ are rectangular monochromatic images and $\gamma$ controls the strength of regularization. The anisotropic norm
\begin{eqnarray*}
\mathrm{TV}_{1}(\bU)
& = & \sum_{i,j} |U_{i+1,j} - U_{i,j}| + |U_{i,j+1} - U_{i,j}|
\amp = \amp	\|\bD_{m} \bU\|_{1} + \|\bU \bD_{p}^{t}\|_{1}
\end{eqnarray*}
is often preferred because it induces sparsity in the differences. Here $\bD_{p}$ is the forward difference operator on $p$ data points. Stacking the columns of $\bU$ into a vector $\bu = \vec(\bU)$
allows one to identify a fusion matrix $\bD$ and write $\mathrm{TV}_{1}(\bU)$ compactly as $\mathrm{TV}_{1}(\bu) = \|\bD \bu\|_{1}$. In this context we reformulate the denoising problem
as minimizing $f(\bu)$ subject to the set constraint $\|\bD \bu\|_{1} \le \gamma$. This revised formulation directly quantifies the quality of a solution in terms of its total variation and brings into play fast pivot-based algorithms for projecting onto multiples of the $\ell_{1}$ unit ball \citep{condat2016}.
Appendix \ref{appendix:example4} provides descriptions of each algorithm.

\subsubsection{Projection of a Matrix to a Good Condition Number}

Consider an $m \times p$ matrix $\bM$ with $m \ge p$ and full singular value decomposition $\bM=\bU\bSigma\bV^t$. The condition number of $\bM$ is the ratio $\sigma_{\rm \max}/\sigma_{\rm min}$ of the largest to the smallest singular value of $\bM$. We denote the diagonal of $\bSigma$ as $\bsigma$. Owing to the von Neumann-Fan inequality, the closest matrix $\bN$ to $\bM$ in the Frobenius norm has the singular value decomposition $\bN=\bU\bX\bV^t$, where the diagonal $\bx$ of $\bX$ satisfies inequalities pertinent to a decent condition number \citep{borwein2010convex}. Suppose $c \ge 1$ is the maximum condition number. Then every pair $(x_i,x_j)$ satisfies $x_i - cx_j \le 0$. Note that $x_i - cx_i > 0$ if and only if $x_i <0$. Thus, nonnegativity of the entries of $\bx$ is enforced. The proximal distance approach to the condition number projection problem invokes the objective and majorization
\begin{eqnarray*}
h_{\rho}(\bx) & = & \frac{1}{2}\|\bsigma-\bx\|^2 + \frac{\rho}{2}\sum_{(i,j)} \dist(x_i - c x_j,\mathbb{R}_-)^2\\
& = & \frac{1}{2}\|\bsigma-\bx\|^2 + \frac{\rho}{2}\sum_{(i,j)} (x_i - c x_j)_+^2 \\
& \le & \frac{1}{2}\|\bsigma-\bx\|^2  
+ \frac{\rho}{2}\sum_{(i,j)} (x_i - c x_j - q_{nij})^2
\end{eqnarray*}
at iteration $n$, where $q_{nij}= \min\{x_{ni} - c x_{nj},0\}$. We can write the majorization more concisely as 
\begin{eqnarray*}
h_{\rho}(\bx) & \le & \frac{1}{2}\|\bA_\rho\bx -\br_n\|^2, \quad 
\bA_\rho \amp = \amp \begin{pmatrix} \bI_p \\ \sqrt{\rho}\bD \end{pmatrix}, \quad
\br_n \amp = \amp \begin{pmatrix} \bsigma \\ \sqrt{\rho}\vec{\bQ_n} \end{pmatrix},
\end{eqnarray*}
where $\vec{\bQ_n}$ stacks the columns of $\bQ_n=(q_{nij})$ and the $p^2 \times p$ fusion matrix $\bD$ satisfies $(\bD \bx)_{k} = x_i - c x_j$ for each component $k$. The minimum of the surrogate occurs at the point $\bx_{n+1} = (\bA_\rho^t\bA_\rho)^{-1}\bA_\rho^t\br_n$.
This linear system can be solved exactly. Appendix \ref{appendix:example5} provides additional details.

\subsection{Numerical Results}

Our numerical experiments compare various strategies for implementing Algorithm \ref{alg:pdi}.
We consider two variants of proximal distance algorithms.
The first directly minimizes the majorizing surrogate (MM), while the second performs steepest descent (SD) on it.
In addition to the aforementioned methods, we tried the subspace MM algorithm described in Section \ref{sec:sd}.
Unfortunately, this method was outperformed in both time and accuracy comparisons by Nesterov accelerated MM; the MM subspace results are therefore omitted.
We also compare our proximal distance approach to ADMM as described in Section \ref{sec:admm}.
In many cases updates require solving a large linear system; we found that the method of conjugate gradients sacrificed little accuracy and largely outperformed LSQR and therefore omit comparisons.
The clustering and denoising examples are exceptional in that the associated matrices $\bD^{t} \bD$ are sufficiently ill-conditioned to cause failures in conjugate gradients.
Table \ref{tab:params} summarizes choices in control parameters across each example.
\begin{table}[tbp]
    \centering
	\begin{footnotesize}
    \begin{tabular}{ccccc}
        \toprule
        & $\delta_{h}$ & $\delta_{d}$ & $\delta_{q}$ & $\rho(t) = r^{t}$ \\
        \midrule
        Metric Projection & $10^{-3}$ & $10^{-2}$ & $10^{-6}$ & $\min\{10^{8}, 1.2^{t-1}\}$ \\
        Convex Regression & $10^{-3}$ & $10^{-2}$ & $10^{-6}$ & $\min\{10^{8}, 1.2^{t-1}\}$ \\
        Convex Clustering & $10^{-2}$ & $10^{-5}$ & $10^{-6}$ & $\min\{10^{8}, 1.2^{t-1}\}$ \\
        Image Denoising & $10^{-1}$ & $10^{-1}$ & $10^{-6}$ & $\min\{10^{8}, 1.5^{t-1}\}$ \\
        Condition Numbers & $10^{-3}$ & $10^{-2}$ & $10^{-6}$ & $\min\{10^{8}, 1.2^{t-1}\}$ \\
        \bottomrule
    \end{tabular}
    \end{footnotesize}
    \caption{\label{tab:params} Summary of control parameters used in each example.}
\end{table}

We now explain example by example the implementation details behind our efforts to benchmark the three strategies (MM, SD, and ADMM) in implementing Algorithm \ref{alg:pdi}. In each case we initialize the algorithm with the solution of the corresponding unconstrained problem. Performance is assessed in terms of speed in seconds or milliseconds, number of iterations until convergence, the converged value of the loss $f(\bx)$, and the converged distance to the constraint set $\dist(\bD \bx, S)$, as described in Algorithm \ref{alg:pdi}. Additional metrics are highlighted where applicable.
The term \textit{inner iterations} refers to the number of iterations to solve a penalized subproblem $\argmin h_{\rho}(\bx)$ for a given $\rho$ whereas \textit{outer iterations} count the total number of subproblems solved.
Lastly, we remind readers that the approximate solution to $\argmin h_{\rho(t)}(\bx)$ is used as a warm start in solving $\argmin h_{\rho(t+1)}(\bx)$.

\subsubsection{Metric Projection.}

In our comparisons, we use input matrices  $\bY \in \mathbb{R}^{m \times m}$ whose iid entries $y_{ij}$ are drawn uniformly from the interval $[0,10]$ and set weights $w_{ij} = 1$. Each algorithm is allotted a maximum of $200$ outer and $10^{5}$ inner iterations, respectively, to achieve a gradient norm of $\delta_{h} = 10^{-3}$ and distance to feasibility of $\delta_{d} = 10^{-2}$. The relative change parameter is set to $\delta_{q} = 0$ and the annealing schedule is set to $\rho(t) = \min\{10^{8}, 1.2^{t-1}\}$ for the proximal distance methods. Table \ref{tab:ex1} summarizes the performance of the three algorithms. Best values appear in boldface. All three algorithms converge to a similar solution as indicated by the final loss $\|\by - \bx\|^{2}$ and distance values. It is clear that SD matches or outperforms MM and ADMM approaches on this example. Notably, the linear system appearing in the MM update admits an exact solution (see Appendix \ref{appendix:example1-update}), yet SD has a faster time to solution with fewer iterations taken.
\begin{table}[tbp]
	\centering
	\setlength{\tabcolsep}{1.2pt}
    \begin{scriptsize}
    \begin{tabular}{rcccccccccccc}
    \toprule
     & \multicolumn{3}{c}{Time (s)} & \multicolumn{3}{c}{Loss $\times 10^{-3}$} & \multicolumn{3}{c}{Distance $\times 10^{3}$} & \multicolumn{3}{c}{Iterations}\\
    \cmidrule(lr){2-4} \cmidrule(lr){5-7} \cmidrule(lr){8-10} \cmidrule(lr){11-13}
    $m$ & MM & SD & ADMM & MM & SD & ADMM & MM & SD & ADMM & MM & SD & ADMM\\
    \midrule
    16 & $0.0429$ & $\bf{0.0338}$ & $0.107$ & $\bf{0.237}$ & $\bf{0.237}$ & $\bf{0.237}$ & $\bf{9.24}$ & $\bf{9.24}$ & $\bf{9.24}$ & $4980$ & $\bf{3920}$ & $7030$\\
     & $(0.000709)$ & $(0.000137)$ & $(0.000858)$ &  &  &  &  &  &  & $(37)$ & $(37)$ & $(37)$\\
    32 & $\bf{1.24}$ & $1.28$ & $2.6$ & $\bf{1.14}$ & $\bf{1.14}$ & $\bf{1.14}$ & $\bf{9.13}$ & $\bf{9.13}$ & $\bf{9.13}$ & $16000$ & $\bf{15400}$ & $17300$\\
     & $(0.00309)$ & $(0.00617)$ & $(0.0149)$ &  &  &  &  &  &  & $(41)$ & $(41)$ & $(41)$\\
    64 & $19.5$ & $\bf{16.7}$ & $43.9$ & $\bf{4.69}$ & $\bf{4.69}$ & $\bf{4.69}$ & $\bf{8.7}$ & $\bf{8.7}$ & $\bf{8.7}$ & $30100$ & $\bf{24200}$ & $33700$\\
     & $(0.00835)$ & $(0.0148)$ & $(0.0616)$ &  &  &  &  &  &  & $(44)$ & $(44)$ & $(44)$\\
    128 & $171$ & $\bf{150}$ & $725$ & $\bf{18.4}$ & $\bf{18.4}$ & $\bf{18.4}$ & $\bf{9.44}$ & $\bf{9.44}$ & $\bf{9.44}$ & $29900$ & $\bf{23700}$ & $51900$\\
     & $(0.613)$ & $(0.28)$ & $(0.772)$ &  &  &  &  &  &  & $(44)$ & $(44)$ & $(44)$\\
    256 & $1670$ & $\bf{1570}$ & $9110$ & $\bf{75.3}$ & $\bf{75.3}$ & $\bf{75.3}$ & $\bf{8.68}$ & $\bf{8.68}$ & $\bf{8.68}$ & $32500$ & $\bf{26700}$ & $76100$\\
     & $(0.702)$ & $(5.63)$ & $(75.4)$ &  &  &  &  &  &  & $(46)$ & $(46)$ & $(46)$\\
    \bottomrule
    \end{tabular}
    \end{scriptsize}
	\caption{
	    \label{tab:ex1} Metric projection. Times are averaged over 3 replicates with standard deviations in parentheses. Reported iteration counts reflect the total inner iterations taken with outer iterations in parentheses.
    }
\end{table}
The selected convergence metrics in Figure~\ref{fig:converge} vividly illustrate stability of solutions $\bx_{\rho} = \argmin h_{\rho}(\bx)$ along an annealing path from $\rho = 1$ to $\rho = 1.2^{40} \approx 1470$.
Specifically, solving each penalized subproblem along the sequence results in marginal increase in the loss term with appreciable decrease in the distance penalty.
Except for the first outer iteration, there is minimal decrease of the loss, distance penalty, or penalized objective within a given outer iteration even as the gradient norm vanishes.
The observed tradeoff between minimizing a loss model and minimizing a nonnegative penalty is well-known in penalized optimization literature \citep[see Proposition 7.6.1 on p.~183]{beltrami1970algorithmic,lange2016mm}.
\begin{figure}[!b]
    \centering
    \includegraphics[width=0.35\textwidth]{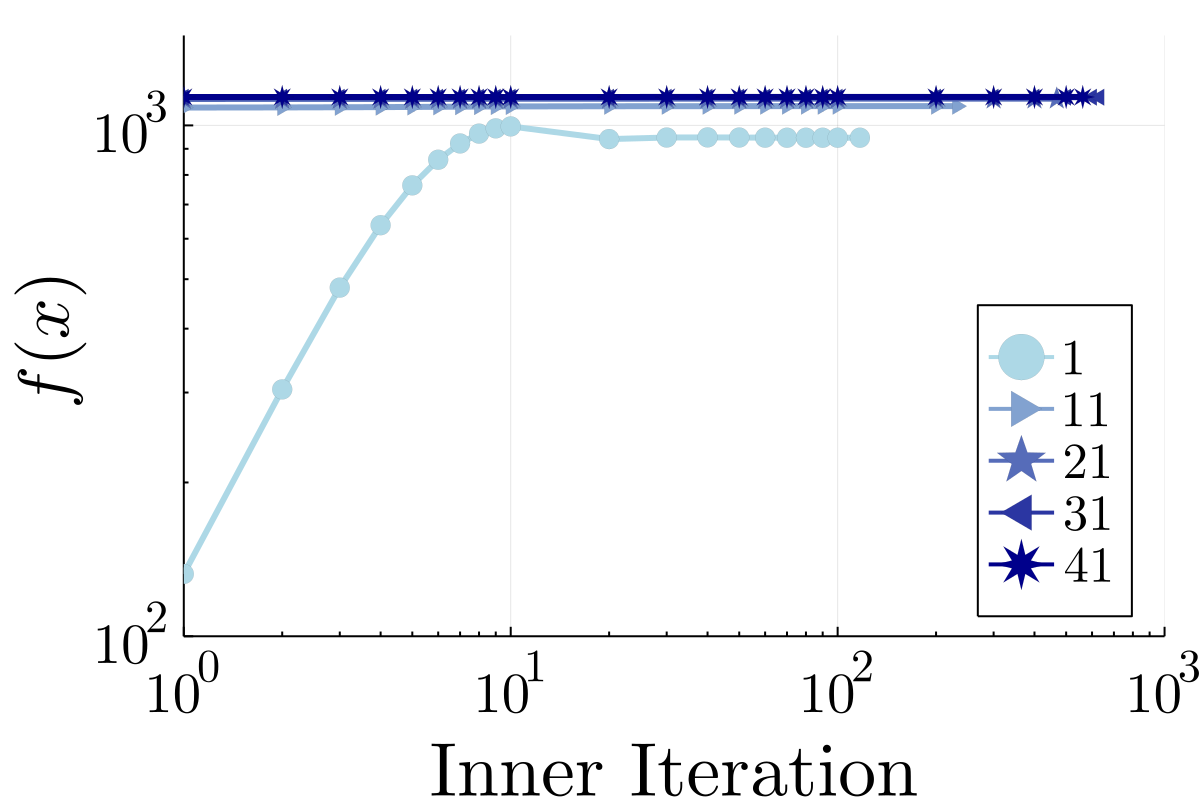}%
    \includegraphics[width=0.35\textwidth]{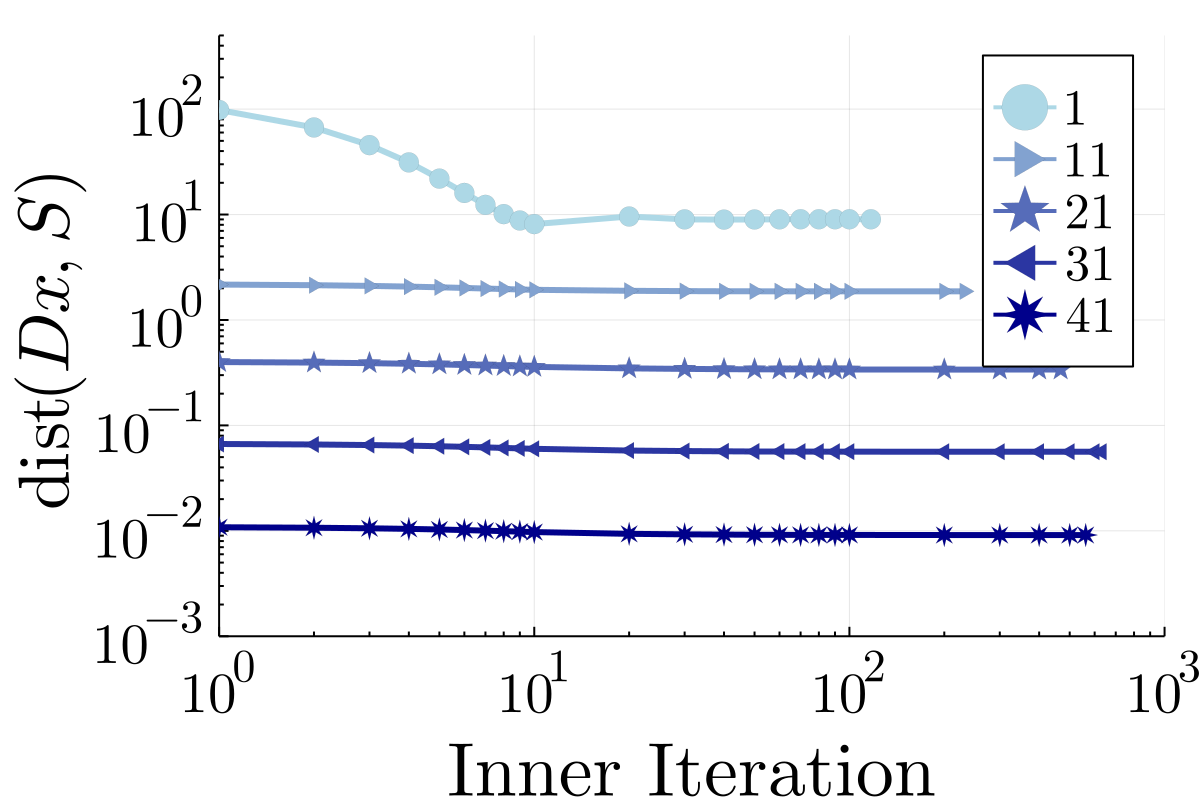}
    \includegraphics[width=0.35\textwidth]{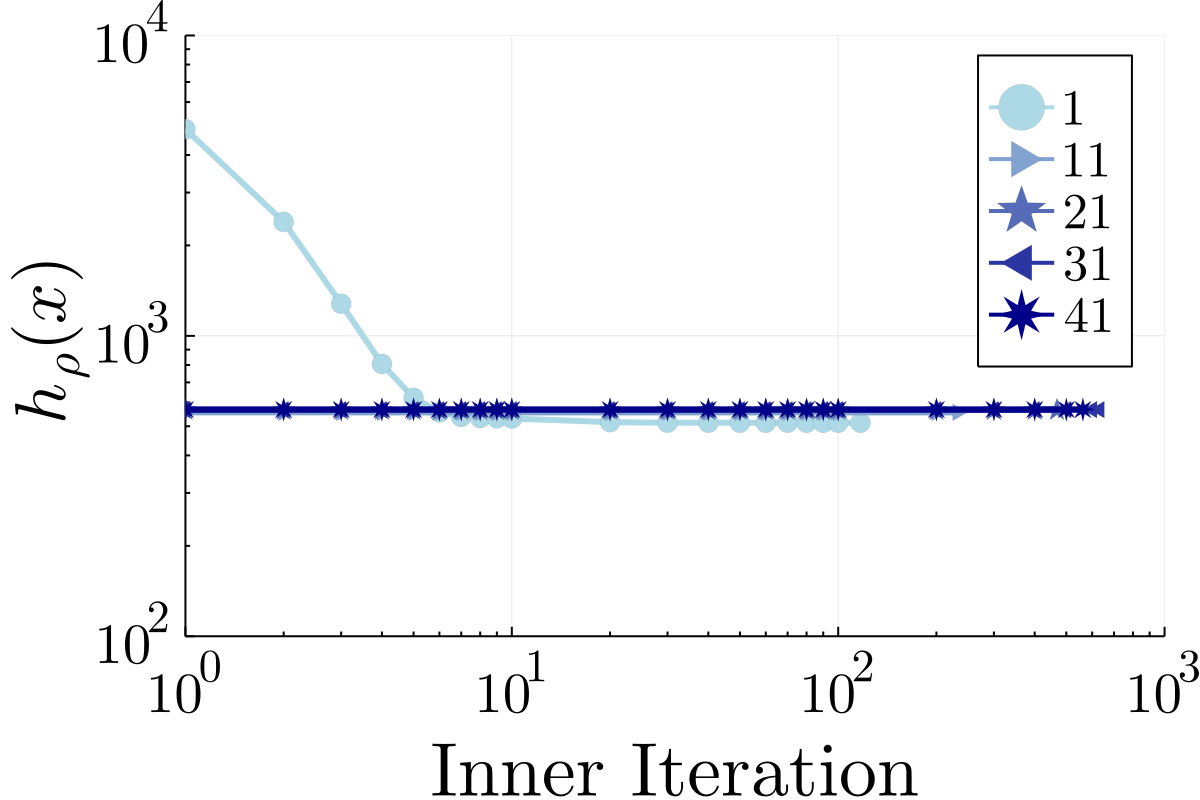}%
    \includegraphics[width=0.35\textwidth]{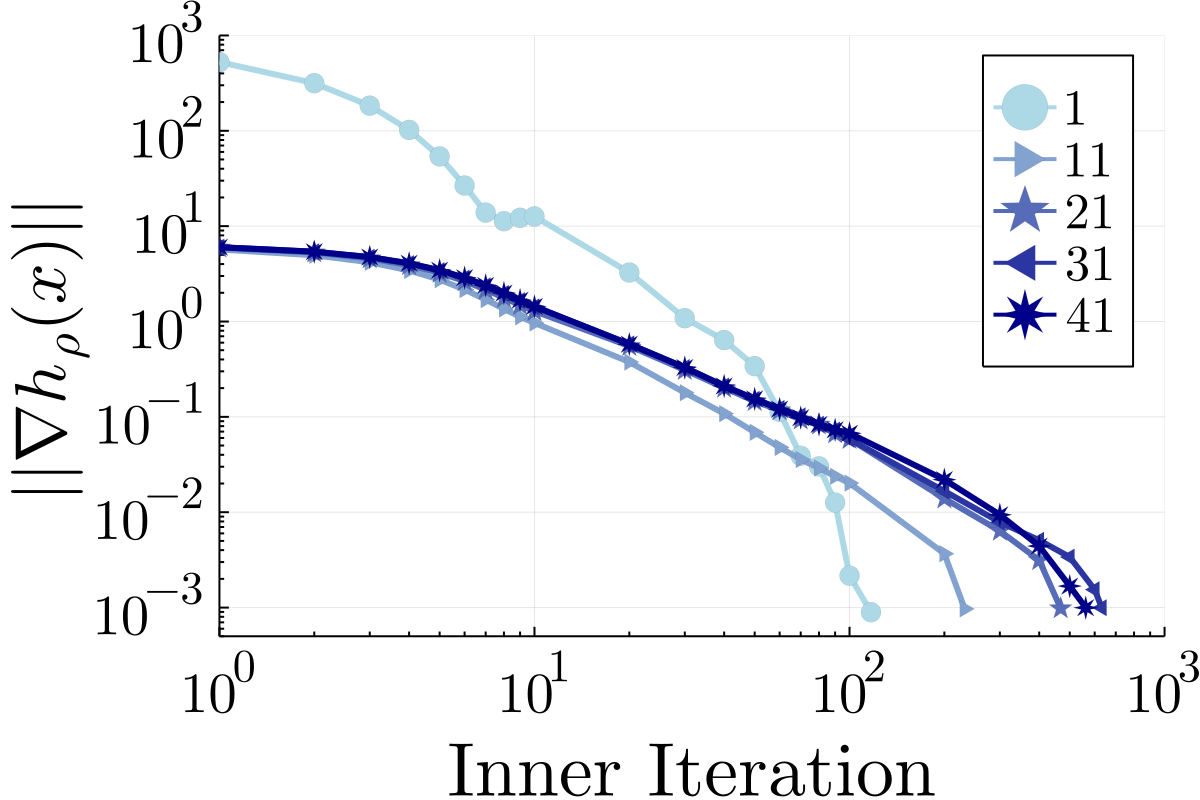}
    \caption{\label{fig:converge} Loss, distance, penalized objective, and gradient norm for SD on metric projection problem with $32$ nodes, labeled by outer iteration.}
\end{figure}
\newpage

\subsubsection{Convex Regression.}

In our numerical examples the observed functional values $y_i$ are independent Gaussian deviates with means $\psi(\bx_{i})$ and common variance $\sigma^2=0.1$. The predictors are iid deviates sampled from the uniform distribution on $[-1,1]^d$. We choose the
simple convex function $\psi(\bx_{i})=\|\bx_i\|^2$ for our benchmarks for ease in interpretation; the interested reader may consult the work of \cite{mazumder_computational_2019} for a detailed account of the applicability of the technique in general. Each algorithm is allotted a maximum of $200$ outer and $10^{4}$ inner iterations, respectively, to converge with $\delta_{h} = 10^{-3}$, $\delta_{d} = 10^{-2}$, and $\delta_{q} = 10^{-6}$.
The annealing schedule is set to $\rho(t) = \min\{10^{8}, 1.2^{t-1}\}$. 

Table \ref{tab:ex2} demonstrates that although the SD approach is appreciably faster than both MM and ADMM, the latter appear to converge on solutions with marginal improvements in minimizing the loss $\|\by - \btheta\|^{2}$, distance, and mean squared error (MSE) measured using ground truth functional values $\psi(\bx_{i})$ and estimates $\theta_{i}$. Interestingly, increasing both the number of features and samples does not necessarily increase the amount of required computational time in using a proximal distance approach; for example, see results with $d=2$ and $d=20$ features. This may be explained by sensitivity to the annealing schedule.
 \begin{table}[tbp]
  	\centering
  	\setlength{\tabcolsep}{2pt}
  	\begin{scriptsize}
  	\resizebox{\textwidth}{!}{%
    \begin{tabular}{rrcccccccccccc}
    \toprule
    &  & \multicolumn{3}{c}{Time (s)} & \multicolumn{3}{c}{Loss $\times 10^{3}$} & \multicolumn{3}{c}{Distance $\times 10^{4}$} & \multicolumn{3}{c}{MSE $\times 10^{2}$}\\
    \cmidrule(lr){3-5} \cmidrule(lr){6-8} \cmidrule(lr){9-11} \cmidrule(lr){12-14}
    $d$ & $m$ & MM & SD & ADMM & MM & SD & ADMM & MM & SD & ADMM & MM & SD & ADMM\\
    \midrule
    1 & 50 & $0.015$ & $\bf{0.0109}$ & $0.0171$ & $\bf{454}$ & $\bf{454}$ & $\bf{454}$ & $\bf{87.3}$ & $87.4$ & $87.5$ & $\bf{70.1}$ & $\bf{70.1}$ & $\bf{70.1}$\\
    &  & $(0.00477)$ & $(0.00162)$ & $(0.0033)$ &  &  &  &  &  &  &  &  & \\
    & 100 & $0.0382$ & $\bf{0.0311}$ & $0.0578$ & $\bf{510}$ & $\bf{510}$ & $\bf{510}$ & $\bf{94.3}$ & $94.4$ & $94.5$ & $\bf{75.3}$ & $\bf{75.3}$ & $\bf{75.3}$\\
    &  & $(0.002)$ & $(0.000301)$ & $(0.00179)$ &  &  &  &  &  &  &  &  & \\
    & 200 & $0.138$ & $\bf{0.0991}$ & $0.205$ & $\bf{471}$ & $\bf{471}$ & $\bf{471}$ & $\bf{92.2}$ & $\bf{92.2}$ & $\bf{92.2}$ & $\bf{71}$ & $\bf{71}$ & $\bf{71}$\\
    &  & $(0.00662)$ & $(0.00138)$ & $(0.00213)$ &  &  &  &  &  &  &  &  & \\
    & 400 & $0.565$ & $\bf{0.464}$ & $0.862$ & $\bf{501}$ & $\bf{501}$ & $\bf{501}$ & $\bf{96.9}$ & $97$ & $97$ & $\bf{79.1}$ & $\bf{79.1}$ & $\bf{79.1}$\\
    &  & $(0.012)$ & $(0.000493)$ & $(0.0184)$ &  &  &  &  &  &  &  &  & \\
    2 & 50 & $1.71$ & $\bf{0.693}$ & $16$ & $122$ & $126$ & $\bf{118}$ & $85.9$ & $\bf{85.3}$ & $85.5$ & $72.2$ & $73.1$ & $\bf{70.8}$\\
    &  & $(0.00334)$ & $(0.00341)$ & $(0.00511)$ &  &  &  &  &  &  &  &  & \\
    & 100 & $11.8$ & $\bf{3.33}$ & $66.6$ & $\bf{162}$ & $163$ & $\bf{162}$ & $99.4$ & $\bf{98}$ & $98.4$ & $95.4$ & $95.8$ & $\bf{95.1}$\\
    &  & $(0.0189)$ & $(0.00704)$ & $(0.0844)$ &  &  &  &  &  &  &  &  & \\
    & 200 & $51.4$ & $\bf{14.1}$ & $230$ & $\bf{233}$ & $234$ & $\bf{233}$ & $98.1$ & $\bf{94.2}$ & $96.8$ & $123$ & $123$ & $\bf{122}$\\
    &  & $(0.0701)$ & $(0.00805)$ & $(0.273)$ &  &  &  &  &  &  &  &  & \\
    & 400 & $200$ & $\bf{50.3}$ & $917$ & $239$ & $239$ & $\bf{238}$ & $94.3$ & $\bf{90.8}$ & $91.9$ & $\bf{140}$ & $\bf{140}$ & $\bf{140}$\\
    &  & $(1.06)$ & $(0.299)$ & $(1.48)$ &  &  &  &  &  &  &  &  & \\
    10 & 50 & $0.19$ & $\bf{0.00722}$ & $0.196$ & $0.000891$ & $0.00109$ & $\bf{0.000838}$ & $0.821$ & $2.23$ & $\bf{0.488}$ & $\bf{8.59}$ & $8.63$ & $8.61$\\
    &  & $(0.00281)$ & $(9.99 \times 10^{-5})$ & $(0.00743)$ &  &  &  &  &  &  &  &  & \\
    & 100 & $0.854$ & $\bf{0.0644}$ & $0.873$ & $\bf{0.000937}$ & $0.00097$ & $0.000943$ & $0.154$ & $\bf{0}$ & $0.11$ & $10.3$ & $\bf{10.2}$ & $10.3$\\
    &  & $(0.0002)$ & $(0.000503)$ & $(0.00191)$ &  &  &  &  &  &  &  &  & \\
    & 200 & $3.77$ & $\bf{0.398}$ & $3.89$ & $\bf{0.000883}$ & $0.00099$ & $0.00101$ & $0.281$ & $\bf{0}$ & $0.292$ & $9.64$ & $\bf{9.63}$ & $9.65$\\
    &  & $(0.00486)$ & $(0.00888)$ & $(0.0132)$ &  &  &  &  &  &  &  &  & \\
    & 400 & $26.8$ & $\bf{3.17}$ & $27.6$ & $\bf{0.000992}$ & $0.000997$ & $0.000999$ & $0.185$ & $0.288$ & $\bf{0.176}$ & $9.41$ & $9.42$ & $\bf{9.39}$\\
    &  & $(0.405)$ & $(0.0183)$ & $(0.0488)$ &  &  &  &  &  &  &  &  & \\
    20 & 50 & $0.39$ & $\bf{0.00791}$ & $0.399$ & $0.000991$ & $0.00542$ & $\bf{0.00091}$ & $\bf{0.027}$ & $7.06$ & $0.0696$ & $9.77$ & $\bf{9.34}$ & $9.57$\\
    &  & $(0.00132)$ & $(2.6 \times 10^{-5})$ & $(0.00193)$ &  &  &  &  &  &  &  &  & \\
    & 100 & $1.46$ & $\bf{0.0684}$ & $1.58$ & $0.000995$ & $\bf{0.000965}$ & $0.000996$ & $0.0308$ & $\bf{0}$ & $\bf{0}$ & $9.13$ & $9.22$ & $\bf{9}$\\
    &  & $(0.000164)$ & $(0.000174)$ & $(0.0329)$ &  &  &  &  &  &  &  &  & \\
    & 200 & $7.7$ & $\bf{0.414}$ & $7.78$ & $0.000984$ & $0.00113$ & $\bf{0.000961}$ & $\bf{0.251}$ & $1.74$ & $0.438$ & $\bf{9.6}$ & $9.61$ & $\bf{9.6}$\\
    &  & $(0.166)$ & $(0.00109)$ & $(0.0105)$ &  &  &  &  &  &  &  &  & \\
    & 400 & $30.2$ & $\bf{3.03}$ & $30$ & $\bf{0.000997}$ & $0.00105$ & $0.00142$ & $\bf{0.0921}$ & $1.68$ & $0.646$ & $\bf{10}$ & $10.2$ & $10.2$\\
    &  & $(0.0275)$ & $(0.00605)$ & $(0.0157)$ &  &  &  &  &  &  &  &  & \\
    \bottomrule
    \end{tabular}
    } 
  	\end{scriptsize}
  	\caption{\label{tab:ex2} Convex regression. Times are reported as averages over 3 replicates.}
  \end{table}

\subsubsection{Convex Clustering.}

To evaluate the performance of the different methods on convex clustering, we consider a mixture of simulated data and discriminant analysis data from the UCI Machine Learning Repository \citep{dua2019uci}. The simulated data in \texttt{gaussian300} consists of 3 Gaussian clusters generated from bivariate normal distributions with means $\bmu = (0.0, 0.0)^{t}$, $(2.0, 2.0)^{t}$, and $(1.8, 0.5)^{t}$, standard deviation $\sigma = 0.1$, and class sizes $n_{1} = 150, n_{2} = 50, n_{3} = 100$. This easy dataset is included to validate Algorithm \ref{alg:search} described later as a reasonable solution path heuristic. The data in \texttt{iris} and \texttt{zoo} are representative of clustering with purely continuous or purely discrete data, respectively. In these two datasets, samples with same class label form a cluster. Finally, the simulated data \texttt{spiral500} is a classic example that thwarts $k$-means clustering. Each algorithm is allotted a maximum of $10^{4}$ inner iterations to solve a $\rho$-penalized subproblem at level $\delta_{h} = 10^{-2}$.
The annealing schedule is set to $\rho(t) = \min\{10^{8}, 1.2^{t-1}\}$ over 100 outer iterations with $\delta_{d} = 10^{-5}$ and $\delta_{q} = 10^{-6}$.
\begin{algorithm}[tbp]
    \footnotesize
    \begin{algorithmic}[1]
        \Procedure{cvxclusterpath}{$\bX$, $s_{0}$, $s_{\mathrm{step}}$}
        \State $\bU \gets \bX$
		\Comment{Initialize centroid assignments.}
        \State $s \gets s_{0}$
		\Comment{Initialize sparsity level in $[0,1]$.}
		\State $k_{\mathrm{max}} \gets \binom{m}{2}$
		\Comment{Determine upper bound from samples $m$.}
		\While{$s < 1$}
			\State $k \gets \mathrm{round}((1-s) \times k_{\mathrm{max}})$
			\Comment{Set parameter for sparsity projection.}
            \State $\bU_{s} \gets \argmin h_{\rho,k}(\bU)$
			\Comment{Minimize with choice of $k$.}
			\State $c \gets \mathrm{count}(\bU_{s})$
			\Comment{Count the number of coalesced centroid differences.}
			\State $s_{\mathrm{proposal}} \gets c /k_{\mathrm{max}}$
            \Comment{Propose a new sparsity level based on satisfied constraints.}
			\If{$s_{\mathrm{proposal}} > s + s_{\mathrm{step}}$}
				\State $s \gets s_{\mathrm{proposal}}$
				\Comment{Accept proposal if it increases sparsity.}
			\Else
				\State $s \gets s + s_{\mathrm{step}}$
				\Comment{Otherwise move by a fixed amount.}
			\EndIf
        \EndWhile
        \EndProcedure
    \end{algorithmic}
    \caption{Search Heuristic}
    \label{alg:search}
\end{algorithm}

Because the number of clusters is usually unknown, we implement the search heuristic outlined in Algorithm \ref{alg:search}. The idea behind the heuristic is to gradually coerce clustering without exploring the full range of the hyperparameter $k$.
As one decreases the number of admissible nonzero centroid differences $k$ from $k_{\max}$ to $0$, sparsity ($1-k/k_{\max}$) in the columns of $\bU \bD$ increases to reflect coalescing centroid assignments.
Thus, Algorithm 1 generates a list of candidate clusters that can be evaluated by various measures of similarity \citep{vinh_information_2010}. 
For example, the adjusted Rand index (ARI) provides a reasonable measure of the distance to the ground truth in our examples as it accounts for both the number of identified clusters and cluster assignments.
We also report the related normed Mutual Information (NMI).
The ARI takes values on $[-1,1]$ whereas NMI appears on a $[0,1]$ scale.

ADMM, as implemented here, is not remotely competitive on these examples given its extremely long compute times and failure to converge in some instances. These times are only exacerbated by the search heuristic and therefore omit ADMM from this example.
The findings reported in Table \ref{tab:ex3} indicate the same accuracy for MM (using LSQR) and SD as measured by loss and distance to feasibility. Here we see that the combination of the proximal distance algorithms and the overall search heuristic (Algorithm \ref{alg:search}) yields perfect clusters in the \texttt{gaussian300} example on the basis of ARI and NMI. To its disadvantage, the search heuristic is greedy and generally requires tuning.
Both MM and SD achieve similar clusterings as indicated by ARI and NMI. Notably, SD generates candidate clusterings faster than MM.
\begin{table}[tbp]
	\centering
	\setlength{\tabcolsep}{2pt}
	\begin{scriptsize}
    \begin{tabular}{rrrrcccccccccc}
    \toprule
    & & &  & \multicolumn{2}{c}{Time (s)} & \multicolumn{2}{c}{Loss} & \multicolumn{2}{c}{Distance $\times 10^{5}$} & \multicolumn{2}{c}{ARI} & \multicolumn{2}{c}{NMI}\\
    \cmidrule(lr){5-6} \cmidrule(lr){7-8} \cmidrule(lr){9-10} \cmidrule(lr){11-12} \cmidrule(lr){13-14}
    dataset & features & samples & classes & MM & SD & MM & SD & MM & SD & MM & SD & MM & SD\\
    \midrule
    zoo & 16 & 101 & 7 & $95.6$ & $\bf{77.1}$ & $\bf{1600}$ & $\bf{1600}$ & $\bf{8.62}$ & $\bf{8.62}$ & $0.841$ & $\bf{0.848}$ & $0.853$ & $\bf{0.856}$\\
    & & &  & $(0.268)$ & $(2.02)$ &  &  &  &  & $(0)$ & $(0.0118)$ & $(0)$ & $(0.00256)$\\
    iris & 4 & 150 & 3 & $76.4$ & $\bf{62.8}$ & $\bf{596}$ & $\bf{596}$ & $\bf{8.8}$ & $\bf{8.8}$ & $\bf{0.575}$ & $\bf{0.575}$ & $\bf{0.734}$ & $\bf{0.734}$\\
    & & &  & $(0.129)$ & $(2)$ &  &  &  &  & $(0)$ & $(0)$ & $(0)$ & $(0)$\\
    gaussian300 & 2 & 300 & 3 & $190$ & $\bf{155}$ & $\bf{598}$ & $\bf{598}$ & $\bf{8.98}$ & $\bf{8.98}$ & $\bf{1}$ & $\bf{1}$ & $\bf{1}$ & $\bf{1}$\\
    & & &  & $(0.173)$ & $(0.177)$ &  &  &  &  & $(0)$ & $(0)$ & $(0)$ & $(0)$\\
    spiral500 & 2 & 500 & 2 & $715$ & $\bf{561}$ & $\bf{998}$ & $\bf{998}$ & $\bf{8.98}$ & $\bf{8.98}$ & $\bf{0.133}$ & $\bf{0.133}$ & $\bf{0.366}$ & $\bf{0.366}$\\
    & & &  & $(18.5)$ & $(0.501)$ &  &  &  &  & $(0)$ & $(0)$ & $(0)$ & $(0)$\\
    \bottomrule
    \end{tabular}
	\end{scriptsize}
	\caption{\label{tab:ex3} Convex clustering. Times reflect the total time spent generating candidate clusterings using Algorithm \ref{alg:search}. Additional metrics correspond to the optimal clustering on the basis of maximal ARI. Time and clustering indices are averaged over 3 replicates with standard deviations reported in parentheses.
	} 
\end{table}

\subsubsection{Total Variation Image Denoising.}

To evaluate our denoising algorithm, we consider two standard test images, \texttt{cameraman} and \texttt{peppers\_gray}. White noise with $\sigma = 0.2$ is applied to an image and then reconstructed using our proximal distance algorithms. Only MM and SD are tested with a maximum of $100$ outer and $10^{4}$ inner iterations, respectively, and convergence thresholds $\delta_{h} = 10^{-1}$, $\delta_{d} = 10^{-1}$, and $\delta_{q} = 10^{-6}$. A moderate schedule $\rho(t) = \{10^{8}, 1.5^{t-1}\}$ performs well even with such lax convergence criteria. Table \ref{tab:ex4} reports convergence metrics and image quality indices, MSE and Peak Signal-to-Noise Ratio (PSNR). Timings reflect the total time spent generating solutions, starting from a 0\% reduction in the total variation of the input image $\bU$ up to 90\% reduction in increments of 10\%.
Explicitly, we take $\gamma_{0} = \mathrm{TV}_{1}(\bU)$ and vary the control parameter $\gamma = (1-s) \gamma_{0}$ with $s \in [0,1]$ to control the strength of denoising.
Figure \ref{fig:images} depicts the original and reconstructed images along the solution path.
\begin{table}[!b]
	\centering
	\setlength{\tabcolsep}{2pt}
	\begin{scriptsize}
    \begin{tabular}{rrrcccccccccc}
    \toprule
    & & & \multicolumn{2}{c}{Time (s)} & \multicolumn{2}{c}{Loss} & \multicolumn{2}{c}{Distance $\times 10^{3}$} & \multicolumn{2}{c}{MSE $\times 10^{5}$} & \multicolumn{2}{c}{PSNR}\\
    \cmidrule(lr){4-5} \cmidrule(lr){6-7} \cmidrule(lr){8-9} \cmidrule(lr){10-11} \cmidrule(lr){12-13}
    image & width & height & MM & SD & MM & SD & MM & SD & MM & SD & MM & SD\\
    \midrule
    cameraman & 512 & 512 & $557$ & $\bf{181}$ & $\bf{8090}$ & $\bf{8090}$ & $\bf{93.1}$ & $\bf{93.1}$ & $\bf{284}$ & $\bf{284}$ & $\bf{25.5}$ & $\bf{25.5}$\\
    & &  & $(5.01)$ & $(0.906)$ &  &  &  &  &  &  &  & \\
    peppers\_gray & 512 & 512 & $553$ & $\bf{183}$ & $\bf{8020}$ & $\bf{8020}$ & $92.6$ & $\bf{92.5}$ & $\bf{290}$ & $\bf{290}$ & $\bf{25.4}$ & $\bf{25.4}$\\
    & &  & $(6.28)$ & $(2.11)$ &  &  &  &  &  &  &  & \\
    \bottomrule
    \end{tabular}
	\end{scriptsize}
	\caption{\label{tab:ex4} Image denoising. Times reflect the total time spent generating candidate images, averaged over 3 replicates, ultimately achieving 90\% reduction in total variation}
\end{table}

\begin{figure}[p]
	\centering
    \includegraphics[width=0.3\textwidth]{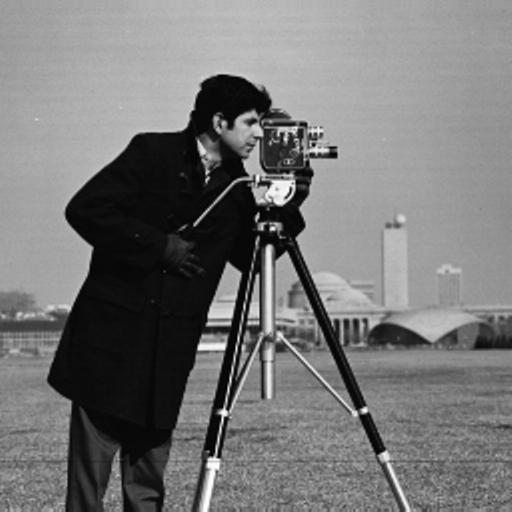}%
    \includegraphics[width=0.3\textwidth]{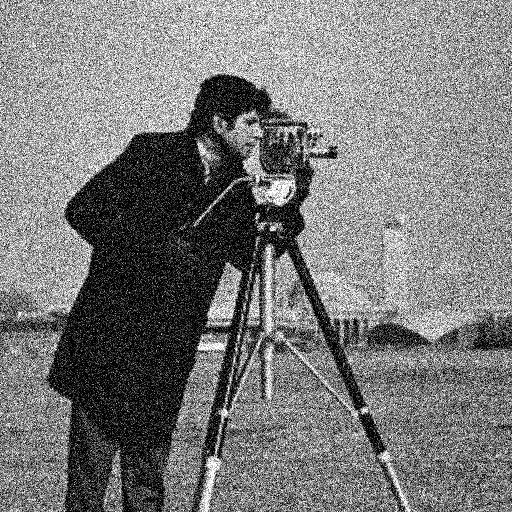}%
    \includegraphics[width=0.3\textwidth]{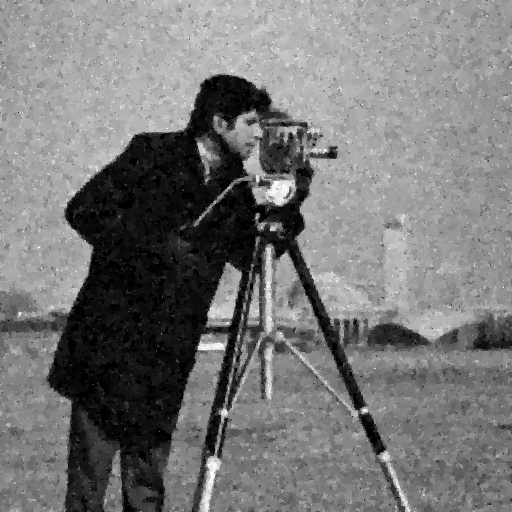}
    \includegraphics[width=0.3\textwidth]{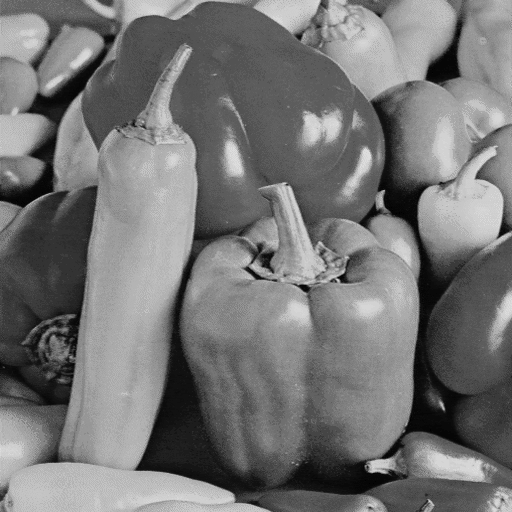}%
    \includegraphics[width=0.3\textwidth]{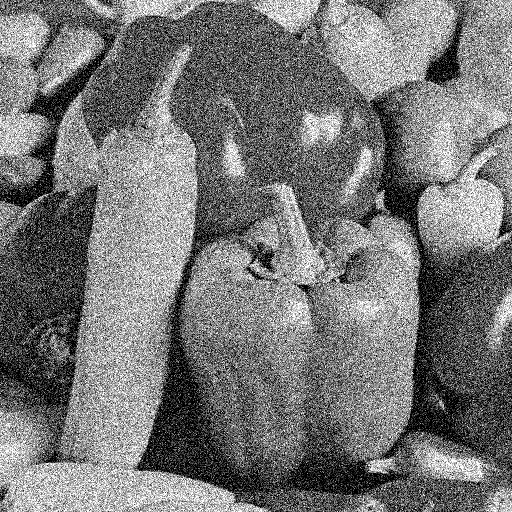}%
    \includegraphics[width=0.3\textwidth]{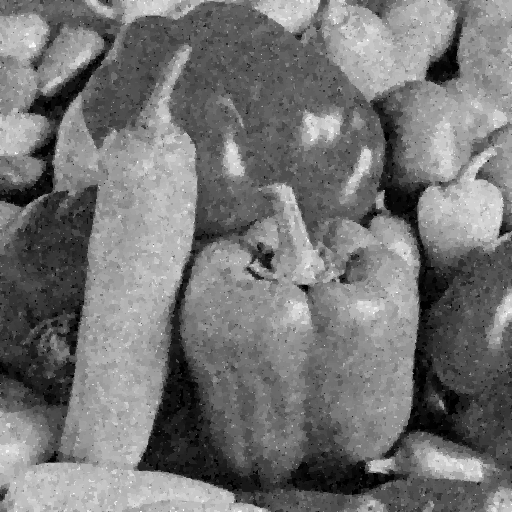}
    \label{fig:images}
	\caption{
		Sample images along the solution path of the search heuristic.
		Images are arranged from left to right as follows: reference, noisy input, and 90\% reduction.
	}
\end{figure}

\newpage

\subsubsection{Projection of a Matrix to a Good Condition Number.}

We generate base matrices $\bM \in \mathbb{R}^{p \times p}$ as random correlation matrices using MatrixDepot.jl \citep{zhang_matrix_2016}, which relies on Davies' and Higham's refinement \citep{davies_numerically_2000} of the Bendel-Mickey algorithm \citep{bendel_population_1978}. Simulations generate matrices with condition numbers c$(\bM)$ in the set $\{119,1920,690\}$. Our subsequent analyses target condition number decreases by a factor $a$ such $a \in \{2, 4, 16, 32\}$.
Each algorithm is allotted a maximum of $200$ outer and $10^{4}$ inner iterations, respectively with choices $\delta_{h} = 10^{-3}$, $\delta_{d} = 10^{-2}, \delta_{q} = 10^{-6}$, and $\rho(t) = 1.2^{t-1}$. Table \ref{tab:ex5} summarizes the performance of the three algorithms. The quality of approximate solutions is similar across MM, SD, and ADMM in terms of loss, distance, and final condition number metrics.
Interestingly, the MM approach requires less time to deliver solutions of comparable quality to SD solutions as the size $p$ of the input matrix $\bM \in \mathbb{R}^{p \times p}$ increases.
\begin{table}[tbp]
	\centering
	\setlength{\tabcolsep}{2pt}
	\begin{scriptsize}
	\resizebox{\textwidth}{!}{%
		\begin{tabular}{rrrcccccccccccc}
        \toprule
        & &  & \multicolumn{3}{c}{Time (ms)} & \multicolumn{3}{c}{Loss $\times 10^{3}$} & \multicolumn{3}{c}{Distance $\times 10^{5}$} & \multicolumn{3}{c}{$c(\bX)$}\\
        \cmidrule(lr){4-6} \cmidrule(lr){7-9} \cmidrule(lr){10-12} \cmidrule(lr){13-15}
        $p$ & $c(\bM)$ & $a$ & MM & SD & ADMM & MM & SD & ADMM & MM & SD & ADMM & MM & SD & ADMM\\
        \midrule
        10 & 119 & 2 & $0.576$ & $\bf{0.391}$ & $0.582$ & $0.549$ & $\bf{0.548}$ & $0.549$ & $41.1$ & $41.1$ & $\bf{40.6}$ & $\bf{59.4}$ & $\bf{59.4}$ & $\bf{59.4}$\\
        & &  & $(0.104)$ & $(0.00629)$ & $(0.172)$ &  &  &  &  &  &  &  &  & \\
        & & 4 & $0.413$ & $\bf{0.359}$ & $0.543$ & $\bf{4.92}$ & $\bf{4.92}$ & $\bf{4.92}$ & $236$ & $\bf{234}$ & $236$ & $\bf{29.7}$ & $\bf{29.7}$ & $\bf{29.7}$\\
        & &  & $(0.00321)$ & $(0.00129)$ & $(0.101)$ &  &  &  &  &  &  &  &  & \\
        & & 16 & $0.439$ & $\bf{0.424}$ & $0.535$ & $\bf{130}$ & $\bf{130}$ & $\bf{130}$ & $\bf{926}$ & $929$ & $927$ & $\bf{7.44}$ & $\bf{7.44}$ & $\bf{7.44}$\\
        & &  & $(0.00319)$ & $(0.00427)$ & $(0.00395)$ &  &  &  &  &  &  &  &  & \\
        & & 32 & $0.662$ & $\bf{0.496}$ & $0.824$ & $\bf{821}$ & $\bf{821}$ & $\bf{821}$ & $984$ & $\bf{983}$ & $\bf{983}$ & $\bf{3.72}$ & $\bf{3.72}$ & $\bf{3.72}$\\
        & &  & $(0.00336)$ & $(0.00357)$ & $(0.00459)$ &  &  &  &  &  &  &  &  & \\
        100 & 1920 & 2 & $\bf{37.7}$ & $51.1$ & $149$ & $\bf{0.00119}$ & $\bf{0.00119}$ & $\bf{0.00119}$ & $0.208$ & $0.208$ & $\bf{0.204}$ & $\bf{960}$ & $\bf{960}$ & $\bf{960}$\\
        & &  & $(2.89)$ & $(3.77)$ & $(0.972)$ &  &  &  &  &  &  &  &  & \\
        & & 4 & $\bf{17.1}$ & $26.5$ & $66.4$ & $\bf{0.0107}$ & $\bf{0.0107}$ & $\bf{0.0107}$ & $\bf{0.845}$ & $\bf{0.845}$ & $0.877$ & $\bf{480}$ & $\bf{480}$ & $\bf{480}$\\
        & &  & $(0.164)$ & $(2.8)$ & $(0.485)$ &  &  &  &  &  &  &  &  & \\
        & & 16 & $\bf{10.1}$ & $11.9$ & $35.3$ & $\bf{0.436}$ & $\bf{0.436}$ & $\bf{0.436}$ & $\bf{17.8}$ & $17.9$ & $17.9$ & $\bf{120}$ & $\bf{120}$ & $\bf{120}$\\
        & &  & $(0.301)$ & $(0.0472)$ & $(0.437)$ &  &  &  &  &  &  &  &  & \\
        & & 32 & $\bf{26}$ & $33.4$ & $74$ & $\bf{3.26}$ & $\bf{3.26}$ & $\bf{3.26}$ & $\bf{95.2}$ & $\bf{95.2}$ & $\bf{95.2}$ & $\bf{60}$ & $\bf{60}$ & $\bf{60}$\\
        & &  & $(2.23)$ & $(0.49)$ & $(0.893)$ &  &  &  &  &  &  &  &  & \\
        1000 & 59400 & 2 & $\bf{60.3}$ & $75.3$ & $157$ & $\bf{1.15 \times 10^{-6}}$ & $\bf{1.15 \times 10^{-6}}$ & $\bf{1.15 \times 10^{-6}}$ & $\bf{0}$ & $\bf{0}$ & $\bf{0}$ & $29700$ & $29700$ & $\bf{29600}$\\
        & &  & $(0.83)$ & $(1.35)$ & $(3.98)$ &  &  &  &  &  &  &  &  & \\
        & & 4 & $\bf{57.4}$ & $71.8$ & $131$ & $\bf{1.04 \times 10^{-5}}$ & $\bf{1.04 \times 10^{-5}}$ & $\bf{1.04 \times 10^{-5}}$ & $\bf{0}$ & $\bf{0}$ & $\bf{0}$ & $\bf{14800}$ & $\bf{14800}$ & $\bf{14800}$\\
        & &  & $(3.37)$ & $(0.882)$ & $(8.44)$ &  &  &  &  &  &  &  &  & \\
        & & 16 & $\bf{55.6}$ & $71.9$ & $152$ & $\bf{0.000258}$ & $\bf{0.000258}$ & $0.000261$ & $\bf{0}$ & $\bf{0}$ & $\bf{0}$ & $3710$ & $3710$ & $\bf{3690}$\\
        & &  & $(3.3)$ & $(2.05)$ & $(38.8)$ &  &  &  &  &  &  &  &  & \\
        & & 32 & $\bf{23200}$ & $30000$ & $87400$ & $\bf{0.0011}$ & $\bf{0.0011}$ & $0.00111$ & $0.11$ & $0.11$ & $\bf{0}$ & $1860$ & $1860$ & $\bf{1850}$\\
        & & & $(855)$ & $(337)$ & $(1340)$ &  &  &  &  &  &  &  &  & \\
        \bottomrule
        \end{tabular}
        } 
	\end{scriptsize}
	\caption{\label{tab:ex5} Condition number experiments.  Here $c(\bM)$  is the condition number of the input matrix, $a$ is the decrease factor, and $c(\bX)$  is the condition number of the solution.}
\end{table}

\section{Discussion}

We now recapitulate the main findings of our numerical experiments. Tables \ref{tab:ex1} through \ref{tab:ex5} show a consistent pattern of superior speed by the steepest descent (SD) version of the proximal distance algorithm. This is hardly surprising since unlike ADMM and MM, SD avoids solving a linear system at each iteration. SD's speed advantage tends to persist even when the linear system can be solved exactly. The condition number example summarized in Table \ref{tab:ex5} is an exception to this rule. Here the MM updates leverage a very simple matrix inverse. MM is usually faster than ADMM. We attribute MM's performance edge to the extra matrix-vector multiplications involving the fusion matrix $\bD$ required by ADMM. In fairness, ADMM closes the speed gap and matches MM on convex regression.

The choice of annealing schedule can strongly impact the quality of solutions. Intuitively, driving the gradient norm $\|\nabla h_{\rho}(\bx)\|$ to nearly 0 for a given $\rho$ keeps the proximal distance methods on the correct annealing path and yields better approximations. Provided the penalized objective is sufficiently smooth, one expects the solution $\bx_{\rho} \approx \argmin h_{\rho}(\bx)$ to be close to the solution  $\bx_{\rho^{\prime}} \approx \argmin h_{\rho^{\prime}}(\bx)$ when the ratio $\rho^{\prime}/ \rho >1$ is not too large. Thus, choosing a conservative $\delta_h$ for the convergence criterion $\|\nabla h_{\rho}(\bx)\| \le \delta_h$ may guard against a poorly specified annealing schedule.
Quantifying sensitivity of intermediate solutions $\bx_{\rho}$ with respect to $\rho$ is key in avoiding an increase in inner iterations per subproblem; for example, as observed in Figure \ref{fig:converge}. Given the success of our practical annealing recommendation to overcome the unfortunate coefficients in Propositions \ref{propositionc} and \ref{proposition0}, this topic merits further consideration in future work.

In practice, it is sometimes unnecessary to impose strict convergence criteria on the proximal distance iterates.
It is apparent that the convergence criteria on convex clustering and image denoising are quite lax compared to choices in other examples, specifically in terms of $\delta_{h}$.
Figure \ref{fig:converge} suggests that most of the work in metric projection involves driving the distance penalty downhill rather than in fitting the loss. Surpisingly, Table \ref{tab:ex3} shows that our strict distance criterion  $\delta_{d} = 10^{-5}$ in clustering is achieved. This implies $\dist(\bD \bx, S)^{2} \le 10^{-10}$ on the selected solutions with $\rho \le 10^{8}$, yet we only required $\|\nabla h_{\rho}(\bx)\| \le 10^{-2}$ on each subproblem. Indeed, not every problem may benefit from precise solution estimates. The image processing example underscores this point as we are able to recover denoised images with the choices $\delta_{h} = \delta_{d} = 10^{-1}$. Problems where patterns or structure in solutions are of primary interest may stand to benefit from relaxed convergence criteria.

Our proximal distance method, as described in Algorithm \ref{alg:pdi}, enjoys several advantages. First, fusion constraints fit naturally in the proximal distance framework. Second, proximal distances enjoy the descent property. Third, there is a nearly optimal step size for gradient descent when second-order information is available on the loss.  Fourth, proximal distance algorithms are competitive if not superior to ADMM on many problems. Fifth, proximal distance algorithms like iterative hard thresholding rely on set projection and are therefore helpful in dealing with hard sparsity constraints. The main disadvantages of the proximal distance methods are (a) the overall slow convergence due to the loss of curvature information on the distance penalty and (b) the need for a reasonable annealing schedule. In practice, a little experimentation can yield a reasonable schedule for an entire class of problems. Many competing methods are only capable of dealing with soft constraints imposed by the lasso and other convex penalties. To their detriment, soft constraints often entail severe parameter shrinkage and lead to an excess of false positives in model selection.

Throughout this manuscript we have stressed the flexibility of the proximal distance framework in dealing with a wide range of constraints as a major strength. From our point of view, proximal distance iteration adequately approximates feasible, locally optimal solutions to constrained optimization problems for well behaved constraint sets, for instance convex sets or semialgebraic sets. Combinatorially complex constraints or erratic loss functions can cause difficulties for the proximal distance methods. The quadratic distance penalty $\dist(\bD \bx, S)^{2}$ is usually not an issue, and projection onto the constraint should be fast. Poor loss functions may either lack second derivatives or may possess a prohibitively expensive and potentially ill-conditioned Hessian $d^{2} f(\bx)$. In this setting techniques such as coordinate descent and regularized and quasi-Newton methods are viable alternatives for minimizing the surrogate $g_{\rho}(\bx \mid \bx_{n})$ generated by distance majorization. In any event, it is crucial to design a surrogate that renders each subproblem along the annealing path easy to solve. This may entail applying additional majorizations in $f(\bx)$. Balanced against this possibility is the sacrifice of curvature information with each additional majorization.

We readily acknowledge that other algorithms may perform better than MM and proximal distance algorithms on specific problems. The triangle fixing algorithm for metric projection is a case in point \citep{brickell_metric_2008}, as are the numerous denoising algorithms based on the $\ell_{1}$ norm. This objection obscures the generic utility of the proximal distance principle. ADMM can certainly be beat on many specific problems, but nobody seriously suggests that it be rejected across the board. Optimization, particularly constrained optimization, is a fragmented subject, with no clear winner across problem domains. Generic methods serve as workhorses, benchmarks, and backstops.

As an aside, let us briefly note that ADMM can be motivated by the MM principle, which is the same idea driving proximal distance algorithms. The optimal pair $(\bx,\by)$ and $\blambda$ furnishes a stationary point of the Lagrangian. Because the Lagrangian is linear in $\blambda$, its maximum for fixed $(\bx,\by)$ is $\infty$. To correct this defect, one can add a viscosity minorization to the Lagrangian. This produces the modified Lagrangian
\begin{eqnarray*}
\mathcal{L}_\mu(\bx,\by, \blambda) & \!\! = \!\! & f(\bx)+ g(\by)
+ \mu \blambda^t(\bD\bx-\by)+\frac{\mu}{2} \|\bD\bx -\by\|^2 
- \frac{\alpha}{2}\|\blambda-\blambda_n \|^2.
\end{eqnarray*}
The penalty term has no impact on the $\bx$ and $\by$ updates. However, the MM  update for $\blambda$ is determined by the stationary condition
\begin{eqnarray*}
{\bf 0} & = & \mu (\bD\bx_{n+1}-\by_{n+1})-\alpha(\blambda-\blambda_n),
\end{eqnarray*}
so that
\begin{eqnarray*}
\blambda_{n+1} & = & \blambda_n+ \frac{\mu}{\alpha} (\bD\bx_{n+1}-\by_{n+1}).
\end{eqnarray*}
The choice $\alpha =1$ gives the standard ADMM update. Thus, the ADMM algorithm alternates decreasing and increasing the Lagrangian in a search for the saddlepoint represented by the optimal trio $(\bx,\by,\blambda)$.

In closing we would like to draw the reader's attention to some generalizations of the MM principle and connections to other well-studied algorithm classes. For instance, a linear fusion constraint $\bD\bx \in S$ can in principle by replaced by a nonlinear fusion constraint $M(\bx)\in S$. The objective and majorizer are then
\begin{eqnarray*}
h_\rho(\bx) & = &  f(\bx)+\frac{\rho}{2}\dist[M(\bx),S]^2 \\
g(\bx \mid \bx_n) & = & f(\bx)+ \frac{\rho}{2}\|M(\bx)-\mathcal{P}_S[M(\bx_n)]\|^2.
\end{eqnarray*}
The objective has gradient $\bg=\nabla f(\bx)+\rho dM(\bx)^t\{M(\bx)-\mathcal{P}_S[M(\bx)]\}$. The second differential of the majorizer is approximately $d^2 f(\bx)+\rho dM(\bx)^td M(\bx)$ for $M(\bx)$ close to $\mathcal{P}_S[M(\bx)]$. Thus, gradient descent can be implemented with step size 
\begin{eqnarray*}
\gamma & = & \frac{\|\bg_n\|^2}{\bg_n^t d^2f(\bx_n) \bg_n + \rho \|dM(\bx_n)\bg_n\|^2}, 
\end{eqnarray*}
assuming the denominator is positive.

Algebraic penalties such as $\|g(\bx)\|^2$ reduce to distance penalties with constraint set $\{\bf 0\}$. The corresponding projection operator sends any vector $\by$ to $\bf 0$ so that the algebraic penalty $\|g(\bx)\|^2 = \dist[g(\bx), \{\bf 0\}]^2$. This observation is pertinent to constrained least squares with $g(\bx)=\bd-\bC\bx$ \citep{golub1996matrix}. The proximal distance surrogate can be expressed as
\begin{eqnarray*}
\frac{1}{2}\|\by-\bA\bx\|^2 +\frac{\rho}{2}\|\bd-\bC\bx\|^2 & = &
\frac{1}{2}\left\|\begin{pmatrix} \by \\ \sqrt{\rho}\bd  \end{pmatrix} - \begin{bmatrix} \bA \\
\sqrt{\rho}\bC \end{bmatrix} \bx \right\|^2
\end{eqnarray*}
and minimized by standard least squares algorithms. No annealing is necessary. Inequality constraints $g(\bx) \le {\bf 0}$ behave somewhat differently. The proximal distance majorization
$\dist[g(\bx),\mathbb{R}^m_-]^2 \le \|g(\bx)-\mathcal{P}_{\mathbb{R}^m_-}[g(\bx_n)]\|^2$ is not the same as the Beltrami quadratic penalty $g(\bx)_+^2$ \citep{beltrami1970algorithmic}. However, the standard majorization \citep{lange2016mm}
\begin{eqnarray*}
g(\bx)_+^2 & \le & \|g(\bx)-\mathcal{P}_{\mathbb{R}^m_-}[g(\bx_n)]\|^2
\end{eqnarray*}
brings them back into alignment.

\section{Proofs}
\label{sec:proofs}

In this section we provide proofs for the convergence results discussed in Section \ref{sec:analysis} and Section \ref{sec:nonconvex} for the convex and noncovex cases, respectively.

\subsection{Proposition \ref{propositiona}   }
\begin{proof}
Without loss of generality we can translate the coordinates so that $\by={\bf 0}$. Let $B$ be the unit sphere $\{\bx: \|\bx\|= 1\}$. Our first aim is to show that $h_\rho(\bx) > f({\bf 0})$ throughout $B$. Consider the set $B \cap T$, which is possibly empty. On this set the infimum $b$ of $f(\bx)$ is attained, so $b>f({\bf 0})$ by assumption. The set $B \setminus T$ will be divided into two regions,
a narrow zone adjacent to $T$ and the remainder. Now let us show that there exists a $\delta > 0$ such that $h_\rho(\bx) \ge f(\bx)\ge f({\bf 0})+\delta$ for all $\bx \in B$ with $\dist(\bD\bx, S) \le \delta$. If this is not so, then there exists a sequence $\bx_n \in B$ with $f(\bx_n) < f({\bf 0})+\frac{1}{k}$ and $\dist(\bD\bx_n,S) \le \frac{1}{k}$. By compactness, some subsequence of $\bx_n$ converges to $\bz \in B \cap T$ with $f(\bz)\le f({\bf 0})$, contradicting the uniqueness of $\by$. Finally, let $a = \min_{\bx \in B}f(\bx)$. To deal with the remaining region take $\rho$ large enough so that $a+\frac{\rho}{2}\delta^2 > f({\bf 0})$. For such $\rho$, $h_\rho(\bx) > f({\bf 0})$ everywhere on $B$. It follows that on the unit ball $\{\bx: \|\bx\| \le 1\}$, $h_\rho(\bx)$ is minimized at an interior point. Because $h_\rho(\bx)$ is convex, a local minimum is necessarily a global minimum.

To show that the objective $h_\rho(\bx)$ is coercive, it suffices to show that it is coercive along every ray $\{t\bv:t\ge 0, \; \|\bv\|=1\}$ \citep{lange2016mm}. The convex function $r(t) = h_\rho(t\bv)$ satisfies $r(t) \ge r(1)+r_{+}'(1)(t-1)$. Because $r(0) < r(1)$, the point $1$ is on the upward slope of $r(t)$, and the one-sided derivative $r_{+}'(1)>0$. Coerciveness follows from this observation.
\end{proof}

\subsection{Proposition \ref{propositionb}  }

\begin{proof} The first assertion follows from the bound $g_\rho(\bx \mid \bx_n) \ge h_\rho(\bx)$. To prove the second assertion, we note that it suffices to prove the existence of some constant $\rho>0$ such that the matrix $\bA+\rho \bD^t\bD$ is positive definite \citep{debreu1952definite}. If no choice of $\rho$ renders $\bA+\rho \bD^t\bD$ positive definite, then there is a sequence of unit vectors $\bu_{m}$ and a sequence of scalars $\rho_{m}$ tending to $\infty$ such that 
\begin{eqnarray}
\bu_{m}^t \bA \bu_{m}+\rho_{m} \bu_{m}^t \bD^t\bD \bu_{m} & \le & 0 . \label{hestenes}
\end{eqnarray}
By passing to a subsequence if needed, we may assume that the sequence
$\bu_{m}$ converges to a unit vector $\bu$.  On the one hand, because $\bD^t\bD$ is positive semidefinite, inequality (\ref{hestenes}) compels the conclusions $\bu_{m}^t \bA \bu_{m} \le 0$, which must carry over to the limit. On the other hand, dividing inequality (\ref{hestenes}) by $\rho_{m}$ and taking limits imply $\bu^t \bD^t \bD \bu \le 0$ and therefore $\|\bD \bu\| = 0$.  Because the limit vector $\bu$ violates the condition $\bu^t \bA \bu>0$, the required $\rho >0$ exists.
\end{proof}

\subsection{Proposition \ref{propositionc}  }

\begin{proof}
Systematic decrease of the iterate values $h_\rho(\bx_n)$ is a consequence of the MM principle. The existence of
$\bz_\rho$ follows from Proposition \ref{propositiona}. To prove the stated bound, first observe that the function
$g_\rho(\bx \mid \bx_n)-\frac{\rho}{2}\|\bD\bx\|^2$ is convex, being the sum of the convex function $f(\bx)$ and a linear function. Because
$\nabla g_\rho(\bx_{n+1} \mid \bx_n)^t(\bx-\bx_{n+1}) \ge \bf 0$ for any $\bx$ in $S$, the supporting hyperplane inequality implies that
\begin{eqnarray*}
g_{\rho}(\bx \mid \bx_n)-\frac{\rho}{2}\|\bD\bx\|^2
& \ge & g_{\rho}(\bx_{n+1} \mid \bx_n)-\frac{\rho}{2}\|\bD\bx_{n+1}\|^2 \\
&  & -\rho  \bx_{n+1}^t\bD^t\bD(\bx-\bx_{n+1}) ,
\end{eqnarray*}
or equivalently
\begin{eqnarray}
g_{\rho}(\bx \mid \bx_n) & \ge & g_{\rho}(\bx_{n+1} \mid \bx_n) 
+ \frac{\rho}{2}  \|\bD(\bx-\bx_{n+1})\|^2. \label{error_bd1}
\end{eqnarray}
Now note that the difference
\begin{eqnarray*}
d(\bx \mid \by) & = &  \frac{1}{2}\|\bx-\mathcal{P}(\by)\|^2 -\frac{1}{2}\|\bx -\mathcal{P}(\bx)\|^2
\end{eqnarray*}
has gradient
\begin{eqnarray*}
\nabla d(\bx \mid \by) & = & \mathcal{P}(\bx) - \mathcal{P}(\by).
\end{eqnarray*}
Because $\mathcal{P}(\bx)$ is non-expansive, the gradient $\nabla d(\bx \mid \by)$ is Lipschitz with constant $1$.
The tangency conditions $d(\by \mid \by)=0$ and $\nabla d(\by\mid \by)= {\bf 0}$ therefore yield 
\begin{eqnarray}
d(\bx \mid \by ) & \le & d(\by \mid \by) +
 \nabla d(\by \mid \by)^t(\bx-\by ) +\frac{1}{2}\|\bx-\by\|^2 \nonumber \\
& = &\frac{1}{2}\|\bx-\by\|^2  \label{error_bd2}
\end{eqnarray}
for all $\bx$.
At a minimum $\bz_\rho$ of $h_\rho(\bx)$, combining inequalities (\ref{error_bd1}) and (\ref{error_bd2}) gives
\begin{eqnarray*}
&  & h_\rho(\bx_{n+1})+\frac{\rho}{2} \|\bD(\bz_\rho-\bx_{n+1})\|^2 \\
& \le & g_\rho(\bx_{n+1} \mid \bx_n)+\frac{\rho}{2}\|\bD(\bz_\rho-\bx_{n+1})\|^2 \\
& \le & g_\rho(\bz_\rho \mid \bx_n) \\
& = & h_\rho(\bz_\rho)-\frac{\rho}{2}\|\bD\bz_\rho-\mathcal{P}(\bD\bz_\rho)\|^2+\frac{\rho}{2}\|\bD\bz_\rho-\mathcal{P}(\bD\bx_n)\|^2
\\
& = &  h_\rho(\bz_\rho) + \rho d(\bD\bz_\rho \mid \bD\bx_n) \\
& \le & h_\rho(\bz_\rho)+\frac{\rho}{2}\|\bD\bz_\rho-\bD\bx_{n}\|^2 .
\end{eqnarray*}
Adding the result 
\begin{eqnarray*}
h_\rho(\bx_{n+1})-h_\rho(\bz_\rho) & \le & 
\frac{\rho}{2}\Big[\|\bD(\bz_\rho-\bx_{n})\|^2
 -\|\bD(\bz_\rho-\bx_{n+1})\|^2\Big]
\end{eqnarray*}
over $n$ and invoking the descent property $h_\rho(\bx_{n+1}) \le h_\rho(\bx_n)$, telescoping produces the desired error bound
\begin{eqnarray*}
h_\rho(\bx_{n+1})-h_\rho(\bz_\rho) & \le &
\frac{\rho}{2(n+1)}  \Big[\|\bD(\bz_\rho-\bx_{0})\|^2 
-\|\bD(\bz_\rho-\bx_{n+1})\|^2\Big] \\
& \le & \frac{\rho}{2(n+1)}\|\bD(\bz_\rho-\bx_{0})\|^2.
\end{eqnarray*}
 This is precisely the asserted bound.
\end{proof}

\subsection{Proposition \ref{proposition0}  }

\begin{proof}
The existence and uniqueness of $\bz_\rho$ are obvious. The remainder of the proof hinges on the facts that $h_\rho(\bx)$ is $\mu$-strongly convex and the surrogate $g_\rho (\bx \mid \bw)$ is $L+\rho\|\bD\|^2$-smooth for all $\bw$. The latter assertion follows from 
\begin{eqnarray*}
\nabla g_\rho (\bx \mid \bw) - \nabla g_\rho (\by \mid \bw)  
& = & \nabla f(\bx)-\nabla f(\by) + \rho \bD^t\bD(\bx-\by).
\end{eqnarray*}
These facts together with $\nabla g_\rho(\bz_\rho \mid \bz_\rho) =\nabla h_\rho(\bz_\rho)= {\bf 0}$ imply 
\begin{eqnarray}
h_\rho(\bx)-h_\rho(\bz_\rho) & \le & g_\rho (\bx \mid \bz_\rho) - g_\rho (\bz_\rho \mid \bz_\rho)  
\nonumber \\
& \le & \nabla g_\rho(\bz_\rho \mid \bz_\rho)^t(\bx-\bz_\rho)+ \frac{L+\rho\|\bD\|^2}{2}\|\bx-\bz_\rho\|^2 
\label{smoothineq} \\
& = & \frac{L+\rho \|\bD\|^2}{2}\|\bx-\bz_\rho\|^2 . \nonumber
\end{eqnarray}
The strong convexity condition
\begin{eqnarray*}
0 & \ge & h_\rho(\bz_\rho) - h_\rho (\bx) \amp \ge \amp \nabla h_\rho (\bx)^t
(\bz_\rho-\bx) +\frac{\mu}{2}\|\bz_\rho-\bx\|^2
\end{eqnarray*}
entails
\begin{eqnarray*}
\|\nabla h_\rho(\bx) \|\cdot \|\bz_\rho-\bx\| & \ge & 
-\nabla h_\rho(\bx)^t(\bz_\rho-\bx) \amp \ge \amp \frac{\mu}{2}\|\bz_\rho-\bx\|^2 .
\end{eqnarray*}
It follows that $\|\nabla h_\rho(\bx) \| \ge \frac{\mu}{2}\|\bx-\bz_\rho\|$.
This last inequality and inequality (\ref{smoothineq}) produce the 
Polyak-{\L}ojasiewicz bound
\begin{eqnarray*}
\frac{1}{2}\|\nabla h_\rho(\bx) \|^2 & \ge & \frac{\mu^2}{2(L+\rho\|\bD\|^2)}
[h_\rho(\bx)-h_\rho(\bz_\rho)].
\end{eqnarray*}
Taking $c=L+\rho\|\bD\|^2$ and 
\begin{eqnarray*}
\bx = \bx_n- c^{-1} \nabla g_\rho(\bx_n \mid \bx_n) \amp = \amp \bx_n- c^{-1} \nabla h_\rho(\bx_n),
\end{eqnarray*}
the Polyak-{\L}ojasiewicz bound gives
\begin{eqnarray*}
h_\rho(\bx_{n+1})-h_\rho(\bx_n) & \le &
g_\rho(\bx_{n+1} \mid \bx_n)-g_\rho(\bx_n \mid \bx_n) \\
& \le & g_\rho(\bx \mid \bx_n)-g_\rho(\bx_n \mid \bx_n) \\
& \le & - c^{-1} \nabla g_\rho(\bx_n \mid \bx_n)^t\nabla h_\rho(\bx_n)
+\frac{c}{2}\| c^{-1}\nabla h_\rho(\bx_n)\|^2 \\
& = & - \frac{1}{2c}\| \nabla h_\rho(\bx_n)\|^2 \\
& \le & - \frac{\mu^2}{2c^2}
[h_\rho(\bx_n)-h_\rho(\bz_\rho)].
\end{eqnarray*}
Rearranging this inequality yields
\begin{eqnarray*}
h_\rho(\bx_{n+1})-h_\rho(\bz_\rho) & \le &
\Big[1- \frac{\mu^2}{2c^2}\Big]
[h_\rho(\bx_n)-h_\rho(\bz_\rho))],
\end{eqnarray*}
which can be iterated to give the stated bound.
\end{proof}

\subsection{Proposition \ref{order_stats_prop}}

\begin{proof} The first claim is true owing to the inclusion-exclusion formula 
\begin{eqnarray*}
f_{(k)}(\bx) & = & \sum_{j=k}^n \sum_{|S|=j}(-1)^{j-k}\binom{j-1}{k-1}\max\{f_i(\bx), i \in S\}
\end{eqnarray*}
and the previously stated closure properties. For $n=3$ and $k=2$ the inclusion-exclusion formula reads $f_{(2)}=\max\{f_1,f_2\}+\max\{f_1,f_3\}+\max\{f_2,f_3\}-2\max\{f_1,f_2,f_3\}$. To prove the second claim, note that a sparsity set in $\mathbb{R}^p$ with at most $k$ nontrivial coordinates can be expressed as the zero set $\{\bx: y_{(p-k)}=0\}$, where $y_i=|x_i|$. Thus, it is semialgebraic. 
\end{proof}

\subsection{Proposition \ref{non-convex-convergence}}
\begin{proof}
To validate the subanalytic premise of Proposition
\ref{MMconvergence}, first note that semialgebraic functions and sets are automatically subanalytic. The penalized loss
\begin{eqnarray*}
h_\rho(\bx) & = & f(\bx)+\frac{\rho}{2}\dist(\bx, S)^2
\end{eqnarray*}
is semialgebraic by the sum rule. Under the assumption stated in Proposition \ref{propositionb}, $g_\rho(\bx \mid \bx_n)$ is strongly convex and coercive for $\rho$ sufficiently large. Continuity of $g_\rho(\bx \mid \bx_n)$ is a consequence of the continuity of $f(\bx)$. The Lipschitz condition follows from the fact that the sum of two Lipschitz functions is Lipschitz. Under these conditions and regardless of which projected point $P_{S}(\bx)$ is chosen, the MM iterates are guaranteed to converge to a stationary point.
\end{proof}

\subsection{Proposition \ref{sparsity-convergence}}
\begin{proof}
Proposition \ref{non-convex-convergence} proves that the proximal distance iterates $\bx_n$ converge to $\bx_\infty$. Suppose that $\bD\bx_\infty$ has $k$ unambiguous largest components in magnitude. Then $\bD\bx_n$ shares this property for large $n$. It follows that all $\bp_n=\mathcal{P}_{S_k}(\bD\bx_n)$ occur in the same $k$-dimensional subspace $S$ for large $n$. Thus, we can replace the sparsity set $S_k$ by the subspace $S$ in minimization from some $n$ onward. Convergence at a linear rate now follows from Proposition \ref{proposition0}.

To prove that the set $A$ of points $\bx$ such that $\bD\bx$ has $k$ ambiguous largest components in magnitude has measure $0$, observe that it is contained in the set $T$ where two or more coordinates tie. Suppose $\bx$ satisfies the tie condition $\bd_i^t\bx=\bd_j^t\bx$ for two rows $\bd_i^t$ and $\bd_j^t$ of $\bD$. If the rows of $\bD$ are unique, then the equality $(\bd_i-\bd_j)^t\bx=0$ defines a hyperplane in $\bx$ space and consequently has measure $0$. Because there are a finite number of row pairs, $T$ as a union has measure $0$.
\end{proof}

\begin{appendices}

\appendix

\section{Convergence Properties of ADMM}
\label{appendix:ADMM}

To avail ourselves of the known results, we define three functions  
\begin{eqnarray*}
H_\rho(\bx,\by)& = & f(\bx)+\frac{\rho}{2}\dist(\by,S)^2 \\
\mathcal{L}_\rho(\bx, \by, \blambda) & = & H_\rho(\bx,\by) + \lambda^t(\bD\bx-\by)\\
q(\blambda) & = & \min_{\bx,\by} \mathcal{L}_\rho(\bx, \by, \blambda),
\end{eqnarray*}
the second and third being the Lagrangian and dual function. This notation leads to following result; see \cite{beck2017first} for an accessible proof.
\begin{secprop} \label{admm_convergence}
Suppose that $S$ is closed and convex and that the loss $f(\bx)$ is  proper, closed, and convex with
domain whose relative interior is nonempty.  Also assume the dual function $q(\blambda)$ achieves its maximum value. If the objective $f(\bx)+ \frac{\rho}{2}\|\bD\bx\|^2 + \ba^t \bx$ achieves its minimum value for all $\ba \ne {\bf 0}$, then the ADMM running averages 
\begin{eqnarray*}
\bar{\bx}_n & = & \frac{1}{n}\sum_{k=1}^n \bx_k \;\; \text{and} \;\; 
\bar{\by}_n \amp = \amp \frac{1}{n}\sum_{k=1}^n \by_k 
\end{eqnarray*}
satisfy
\begin{eqnarray*}
\vert H_\rho(\bar{\bx}_n,\bar{\by}_n)-h_\rho(\bx_\rho)\vert  &  = &  O\Big(\frac{\rho}{n} \Big) \\
\| D \bar{\bx}_n -\by_n\| & =  &  O\Big(\frac{1}{k} \Big).
\end{eqnarray*}
\end{secprop}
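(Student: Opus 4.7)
The plan is to cast the problem in the standard ADMM template $\min f(\bx)+g(\by)$ subject to $\bD\bx-\by=\bzero$, identifying $g(\by)=\frac{\rho}{2}\dist(\by,S)^2$. Because $S$ is closed and convex, $g$ is convex, continuously differentiable with $\rho$-Lipschitz gradient $\nabla g(\by)=\rho(\by-\mathcal{P}_S(\by))$, and $g(\by)\ge 0$. Coupled with the hypotheses that $f$ is proper, closed, convex with nonempty relative interior of its domain and that $q(\blambda)$ attains its maximum, this places us squarely in the convex ADMM regime. Standard Fenchel duality implies the existence of a saddle point $(\bx_\rho,\by_\rho,\blambda_\rho)$ of the Lagrangian $\mathcal{L}_\rho$, with $\by_\rho=\bD\bx_\rho$ and $H_\rho(\bx_\rho,\by_\rho)=h_\rho(\bx_\rho)$. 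The auxiliary coercivity assumption on $f(\bx)+\frac{\rho}{2}\|\bD\bx\|^2+\ba^t\bx$ guarantees that the $\bx$-update (\ref{admm1}) and hence the whole ADMM recursion is well-defined at every iteration.

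First I would write down the first-order optimality conditions for the subproblem updates (\ref{admm1})--(\ref{admm2}). For the $\bx$-step there exists $\bu_{n+1}\in\partial f(\bx_{n+1})$ with $\bu_{n+1}+\bD^t\tilde{\blambda}_{n+1}=\bzero$, where $\tilde{\blambda}_{n+1}=\blambda_n+\mu(\bD\bx_{n+1}-\by_n)$; for the $\by$-step, $\nabla g(\by_{n+1})=\blambda_{n+1}$ via the multiplier update (\ref{admm3}). Plugging these into the convexity inequalities for $f$ and $g$ at any comparison point $(\bx,\by)$ with $\bD\bx=\by$, and summing, produces the one-step variational inequality
\begin{eqnarray*}
H_\rho(\bx_{n+1},\by_{n+1})-H_\rho(\bx,\by)+\blambda^t(\bD\bx_{n+1}-\by_{n+1})
 \le \frac{1}{2\mu}\bigl(\|\blambda-\blambda_n\|^2-\|\blambda-\blambda_{n+1}\|^2\bigr) \\
+\frac{\mu}{2}\bigl(\|\by-\by_n\|^2-\|\by-\by_{n+1}\|^2\bigr),
\end{eqnarray*}
valid for every dual vector $\blambda$. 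This is the textbook ``three-point'' telescoping inequality underlying ergodic ADMM analysis.

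Next I would telescope over $k=0,\dots,n-1$, divide by $n$, and invoke Jensen's inequality using convexity of $H_\rho$ and linearity of the constraint to replace the averaged $H_\rho(\bx_k,\by_k)$ by $H_\rho(\bar\bx_n,\bar\by_n)$. Choosing the comparison point to be the saddle point $(\bx_\rho,\by_\rho,\blambda_\rho)$ and then optimizing separately over $\blambda$ (e.g.\ $\blambda=\blambda_\rho+\bar\br_n/\|\bar\br_n\|$ for the primal residual bound) yields both the objective gap $|H_\rho(\bar\bx_n,\bar\by_n)-h_\rho(\bx_\rho)|\le C_1/n$ and the feasibility estimate $\|\bD\bar\bx_n-\bar\by_n\|\le C_2/n$. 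The constants $C_1,C_2$ scale with $\|\blambda_\rho\|^2/\mu+\mu\|\by_0-\by_\rho\|^2+\|\blambda_0-\blambda_\rho\|^2/\mu$, and since $\blambda_\rho=\nabla g(\by_\rho)$ has magnitude proportional to $\rho$, one obtains the stated $O(\rho/n)$ bound; this is what accounts for the explicit $\rho$ in the numerator.

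The main obstacle is justifying the ergodic telescoping cleanly when $g$ is only indirectly a distance penalty and when the step length $\mu$ is allowed to vary adaptively as in Section \ref{sec:admm}. For fixed $\mu$ the argument is classical, and since the proposition is stated only for fixed $\mu$ one can invoke \citep{beck2017first} to handle the telescoping. The second delicate point is controlling the term $\blambda^t(\bD\bar\bx_n-\bar\by_n)$ by the feasibility residual without losing the rate; the trick is the dual-norm maximization described above, which requires the dual function to attain its maximum (one of our hypotheses) to ensure $\|\blambda_\rho\|<\infty$. Coercivity of the $\bx$-subproblem ensures that the inner minimization in $\bx$ is actually achieved at each iteration, so the subgradient identities above are legitimate and the proof goes through without further complications.
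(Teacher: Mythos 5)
The paper does not actually prove this proposition: it is stated with a pointer to \citep{beck2017first} for the argument, and your sketch is essentially the standard ergodic ADMM analysis from that reference (subproblem optimality conditions, the three-point inequality, telescoping plus Jensen, and dual-norm maximization at a saddle point whose existence follows from the relative-interior and dual-attainment hypotheses), with the $\rho$ in the numerator traced to the smoothness constant of $g(\by)=\frac{\rho}{2}\dist(\by,S)^2$ and the size of $\blambda_\rho=\nabla g(\by_\rho)$. So your proposal is correct and matches the route the paper implicitly relies on; the only cosmetic caveats are the mild mixing of scaled and unscaled multiplier conventions in your optimality conditions and your (sensible) reading of the proposition's second bound as $\|\bD\bar{\bx}_n-\bar{\by}_n\|=O(1/n)$.
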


Note that Proposition \ref{propositionb} furnishes a sufficient condition under which the functions $f(\bx)+ \frac{\rho}{2}\|\bD\bx\|^2 + \ba^t \bx$ achieve their minima. Linear convergence holds under stronger assumptions.

\begin{secprop} \label{admm_linear_convergence}
Suppose that $S$ is closed and convex, that the loss $f(\bx)$ is $L$-smooth and $\mu$-strongly convex,
and that the map determined by $\bD$ is onto. Then the ADMM iterates converge at a linear rate.
\end{secprop}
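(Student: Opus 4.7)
The plan is to place this inside the established framework for linear convergence of ADMM when one block function is strongly convex and smooth, the other is convex with Lipschitz gradient, and the coupling matrix satisfies a rank condition (e.g., the Deng--Yin theorem or the analogous development in Beck's text). First I would confirm that the second component $g(\by)=\frac{\rho}{2}\dist(\by,S)^2$ fits the template: since $S$ is closed and convex, the projector $\mathcal{P}_S$ is firmly nonexpansive, so
\begin{eqnarray*}
\nabla g(\by) & = & \rho\bigl[\by-\mathcal{P}_S(\by)\bigr]
\end{eqnarray*}
exists everywhere and is $\rho$-Lipschitz. Combined with the assumed $\mu$-strong convexity and $L$-smoothness of $f$, and with the surjectivity of $\bD$ (equivalently, $\bD\bD^t \succ 0$ with smallest eigenvalue $\sigma>0$), the standing hypotheses of the classical linear convergence theorem are met.

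Next I would verify the saddle point structure. The KKT conditions are $\nabla f(\bx^*)+\bD^t\blambda^*=\bzero$, $\blambda^*=\rho[\by^*-\mathcal{P}_S(\by^*)]$, and $\bD\bx^*=\by^*$. Strong convexity of $f$ yields a unique $\bx^*$, the identity $\by^*=\bD\bx^*$ then pins $\by^*$, and surjectivity of $\bD$ recovers the unique multiplier via $\blambda^*=-(\bD\bD^t)^{-1}\bD\nabla f(\bx^*)$. With $\blambda^*$ in hand, the heart of the argument is to exhibit a Lyapunov functional of the form
\begin{eqnarray*}
V_n & = & \frac{1}{\mu}\,\|\blambda_n-\blambda^*\|^2 + \mu\,\|\by_n-\by^*\|^2,
\end{eqnarray*}
and prove a contraction $V_{n+1} \le (1-\delta)V_n$ for an explicit $\delta\in(0,1)$ that depends only on $\mu$, $L$, $\rho$, and $\sigma$. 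The ingredients are standard: the first-order conditions of the $\bx$-update, combined with $\mu$-strong convexity of $f$, give a three-point inequality of the form $\mu\|\bx_{n+1}-\bx^*\|^2 \le \langle \bD^t(\blambda^*-\blambda_{n+1}),\bx_{n+1}-\bx^*\rangle + (\text{cross terms})$; the first-order conditions of the $\by$-update yield an analogous bound using $\rho$-smoothness of $g$; and the dual update $\blambda_{n+1}-\blambda_n=\mu(\bD\bx_{n+1}-\by_{n+1})$ links the primal feasibility residual to the multiplier increment.

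The main obstacle is closing the algebra of the contraction. The critical maneuver is to convert bounds involving $\bD^t(\blambda_n-\blambda^*)$ into bounds involving $\|\blambda_n-\blambda^*\|^2$, which is precisely where surjectivity of $\bD$ is used: the inequality $\|\bD^t\bv\|^2\ge\sigma\|\bv\|^2$ holds for every $\bv$ because $\bD\bD^t\succeq\sigma\bI$. Once this conversion is in place and the cross terms are absorbed via Young's inequality with carefully chosen weights, one obtains a strict geometric decay of $V_n$, and geometric decay of $V_n$ in turn forces $\|\bx_n-\bx^*\|$, $\|\by_n-\by^*\|$, and $\|\blambda_n-\blambda^*\|$ to decay geometrically as well. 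Since this entire chain of estimates is essentially the Deng--Yin argument specialized to $g(\by)=\frac{\rho}{2}\dist(\by,S)^2$, the cleanest exposition is to cite the general theorem and check that our hypotheses satisfy its premises, leaving the explicit contraction constant to be read off from that source.
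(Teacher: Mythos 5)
Your proposal is correct, but it follows a genuinely different path from the paper. The paper's ``proof'' is a one-line appeal to \cite{giselsson2016linear}, who establish linear convergence by operator methods: under strong convexity and smoothness of $f$ and surjectivity of $\bD$, the Douglas--Rachford map on the dual problem is a contraction, and that reference also supplies a range of rates and guidance on metric/step-size selection. You instead invoke the Deng--Yin framework: verify that $g(\by)=\frac{\rho}{2}\dist(\by,S)^2$ is convex with $\rho$-Lipschitz gradient (correct, since $\bI-\mathcal{P}_S$ is firmly nonexpansive), that $f$ is $\mu$-strongly convex and $L$-smooth, and that $\bD\bD^t\succeq\sigma\bI$ with $\sigma>0$, then run the primal--dual Lyapunov contraction $V_{n+1}\le(1-\delta)V_n$ with $V_n$ built from $\|\blambda_n-\blambda^*\|^2$ and $\|\by_n-\by^*\|^2$. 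Both routes are legitimate reductions to known results; yours buys an explicit contraction factor in terms of $\mu$, $L$, $\rho$, $\sigma$ and the step size, and makes visible exactly where surjectivity enters (the passage from $\|\bD^t\bv\|$ to $\|\bv\|$), while the paper's citation buys sharper, metric-optimized rates with no algebra. Your KKT bookkeeping ($\nabla f(\bx^*)+\bD^t\blambda^*=\bzero$, $\blambda^*=\nabla g(\by^*)$, $\blambda^*=-(\bD\bD^t)^{-1}\bD\nabla f(\bx^*)$) is right. Two small cautions: the symbol $\mu$ is doing double duty in your write-up --- in the paper it is the ADMM step length, and the weights in the Deng--Yin Lyapunov function should be the step-size/penalty parameter, not the strong-convexity modulus --- and if you cite Deng--Yin you should check which of their scenarios you are in (here the second block matrix is $-\bI$ and $\bD$ has full row rank, so their hypotheses are indeed met), since their conditions are attached to specific blocks.
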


\cite{giselsson2016linear} proved Proposition \ref{admm_linear_convergence} by operator methods. A range of convergence rates is 
specified there.

\section{Additional Details for Metric Projection Example}
\label{appendix:example1}

Given a \(n \times n\) dissimilarity matrix $\bC = (c_{ij})$ with non-negative weights $w_{ij}$, our goal is to find a semi-metric \(\bX = (x_{ij})\).
We start by denoting  $\mathrm{trivec}$ an operation that maps a symmetric  matrix $\bX$ to a vector \(\bx\), \(\bx = \mathrm{trivec}(\bX)\) (Figure \ref{fig:trivec-operation}). Then we write the metric projection objective as
\begin{equation*}
h_{\rho}(\bx)
=
\frac{1}{2} \|\bW^{1/2}(\bx - \bc)\|^{2}_{2}
+ \frac{\rho}{2} \dist(\bT \bx, \mathbb{R}_{+}^{m_{1}})^2
+ \frac{\rho}{2} \dist(\bx, \mathbb{R}_{+}^{m_{2}})^2,
\end{equation*}
where   \(\bc = \mathrm{trivec}(\bC)\).
\begin{figure}[!htbp]
	\centering
	\includegraphics[width=0.5\textwidth]{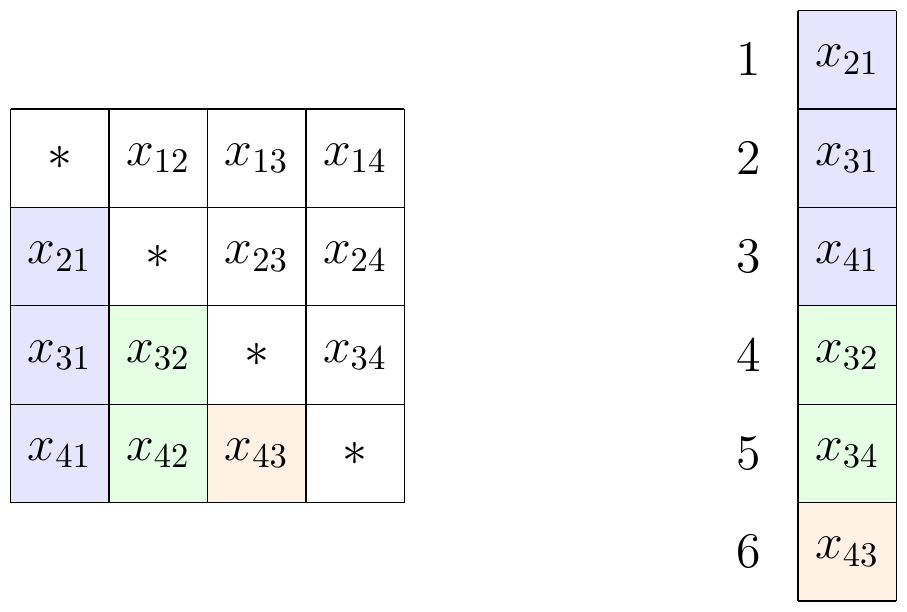}
	\caption{
		Example of a symmetric matrix \(\bX\) and its minimal representation \(\bx = \mathrm{trivec}(\bX)\).
	}
	\label{fig:trivec-operation}
\end{figure}
Here $\bT$ encodes triangle inequalities and the $m_{i}$ count the number of contraints of each type.
The usual distance majorization furnishes a surrogate
\begin{align*}
g_{\rho}(\bx \mid \bx_{n})
&=
\frac{1}{2} \|\bW^{1/2} (\bx - \bc)\|^{2}_{2}
+
\frac{\rho}{2} \|\bT \bx - \mathcal{P}(\bT \bx_{n}, \mathbb{R}_{+}^{m_{1}})\|^{2}_{2}
+
\frac{\rho}{2} \lVert{\bx - \mathcal{P}(\bx_{n}, \mathbb{R}_{+}^{m_{2}})}\rVert^{2}_{2} \\
&=
\frac{1}{2} \|\bW^{1/2} (\bx - \bc)\|^{2}_{2}
+
\frac{\rho}{2} \|\bD \bx - \mathcal{P}(\bD \bx_{n})\|_{2}^{2}.
\end{align*}
The notation \(\mathcal{P}(\cdot, S)\) denotes projection onto a set \(S\).
The fusion matrix $\bD = [\bT; \bI]$ stacks the two operators; the joint projection operates in a block-wise fashion.

\subsection{MM}

We rewrite the surrogate explicitly as a least squares problem minimizing $\|\bA \bx - \bb_{n}\|^{2}_{2}$:
\begin{equation*}
\bx_{n+1} = \underset{\bx}{\argmin} \frac{1}{2} \left\|
\begin{bmatrix}
\bW^{1/2} \\
\sqrt{\rho} \bD
\end{bmatrix} \bx
-
\begin{bmatrix}
\bc \\
\sqrt{\rho} \mathcal{P}(\bD \bx_{n})
\end{bmatrix}
\right\|_{2}^{2},
\end{equation*}
where $\bc \equiv \by$ from the main text.
Updating the RHS $\bb_{n}$ in the linear system reduces to evaluating the projection and copy operations.
It is worth noting that triangle fixing algorithms that solve the metric nearness problem operate in the same fashion, except they work one triangle a time.
That is, each iteration solves $\binom{n}{3}$ least squares problems compared to 1 in this formulation.
A conjugate gradient type of algorithm solves the normal equations directly using $\bA^{t}\bA$, whereas LSQR type methods use only $\bA$ and $\bA^{t}$.

\subsection{Steepest Descent}

The updates $\bx_{n+1} = \bx_{n} - t_{n} \nabla h_{\rho}(\bx_{n})$ admit an exact solution for the line search parameter $t_{n}$.
Recall the generic formula from the main text:
\begin{equation*}
t_{n} = \frac{\|\bv_{n}\|^{2}}{\bv_{n}^{t}\bA\bv_{n} + \rho \|\bD \bv_{n}\|^{2}}.
\end{equation*}
Identifying $\bv_{n}$ with $\nabla h_{\rho}(\bx_{n})$ we have
\begin{align*}
\nabla h_{\rho}(\bx_{n})
&= \bW(\bx_{n} - \bc) + \rho \bD^{t} [\bD \bx_{n} - \mathcal{P}(\bD \bx_{n})] \\
&= \bW(\bx_{n} - \bc) + \rho (\bI + \bT^{t}\bT) \bx_{n} - \rho[\bT^{t} \mathcal{P}(\bT \bx_{n}, \mathbb{R}_{+}^{m_{1}}) + \mathcal{P}(\bx_{n}, \mathbb{R}_{+}^{m_{1}})], \\
t_{n}
&=
\frac{\|\bv_{n}\|^{2}}{\|\bW^{1/2} \bv_{n}\|^{2} + \rho \|\bD \bv_{n}\|^{2}}.
\end{align*}

\subsection{ADMM}

Taking $\by$ as the dual variable and $\blambda$ as scaled multipliers, the updates for each ADMM block are

\begin{align*}
\bx_{n+1}
&= \underset{\bx}{\argmin} \left\|
\begin{bmatrix}
\bW^{1/2} \\
\sqrt{\mu} \bD
\end{bmatrix} \bx
-
\begin{bmatrix}
\bc \\
\sqrt{\mu} (\by_{n} - \blambda_{n}))
\end{bmatrix}
\right\|_{2}^{2}, \\
\by_{n+1}
&= \frac{\alpha}{1+\alpha} \mathcal{P}(\bz_{n}) + \frac{1}{1+\alpha} \bz_{n};
\qquad \bz_{n} = \bD \bx_{n+1} + \blambda_{n},~\alpha = \rho / \mu.
\end{align*}
Finally, the Multipliers follow the standard update.

\subsection{ Properties of the Triangle Inequality Matrix}

These results have been documented before and are useful in designing fast subroutines for $\bD \bx$ and $\bD^t \bD \bx$. Recall that $m$ counts the number of nodes in the problem and $p = \binom{m}{2}$ is the number of parameters. In this notation $\bD = \begin{pmatrix} \bT \\ \bI_p \end{pmatrix}$ and 
$\bD^t\bD= \bT^t\bT + \bI_p$.

\begin{proposition}
	The matrix \(\bT\) has \(3 \binom{m}{3}\) rows and \(\binom{m}{2}\) columns.
\end{proposition}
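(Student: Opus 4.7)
The plan is a direct counting argument, since the matrix $\bT$ is defined purely by the combinatorial structure of triangle inequalities on an $m$-node graph.

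First, I would establish the column count. After applying the $\mathrm{trivec}$ operation, the parameter vector $\bx$ has one coordinate per unordered pair $\{i,j\}$ with $i \ne j$, since the underlying matrix $\bX$ is symmetric with zero diagonal. There are exactly $\binom{m}{2}$ such pairs, so $\bT$ has $\binom{m}{2}$ columns.

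Next I would count the rows by enumerating constraints. Every triangle inequality $x_{ij} - x_{ik} - x_{kj} \le 0$ corresponds to a choice of three distinct nodes $i$, $j$, $k$ together with the designation of one of the three edges of the triangle as the ``long'' edge bounded by the sum of the other two. For each unordered triple $\{i,j,k\}$ of nodes, there are exactly three such designations, yielding three distinct inequalities of the form
\begin{eqnarray*}
x_{ij} - x_{ik} - x_{kj} & \le & 0,\\
x_{ik} - x_{ij} - x_{jk} & \le & 0,\\
x_{jk} - x_{ij} - x_{ik} & \le & 0.
\end{eqnarray*}
Since there are $\binom{m}{3}$ unordered triples, the total number of triangle inequalities, and hence the total number of rows of $\bT$, is $3\binom{m}{3}$.

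The only subtlety is to confirm that none of these $3\binom{m}{3}$ constraints is redundant as a row of $\bT$ (i.e., that each gives a genuinely distinct row). This follows because the three rows associated with a single triple $\{i,j,k\}$ differ in which variable appears with coefficient $+1$, and rows from distinct triples involve different triples of variables. No calculation beyond this bookkeeping is required, so I do not anticipate any significant obstacle; the argument is pure enumeration.
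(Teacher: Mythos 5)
Your proposal is correct and follows essentially the same counting argument as the paper: $\binom{m}{2}$ free parameters from the symmetric, zero-diagonal structure of $\bX$, and three triangle inequalities per unordered triple of nodes, giving $3\binom{m}{3}$ rows. The extra remark about row distinctness is harmless but not needed, since the proposition only counts constraints.
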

\begin{proof}
	Interpret \(\bX\) as the adjacency matrix for a complete directed graph on \(m\) nodes without self-edges.
	When \(\bX\) is symmetric the number of free parameters is therefore \(\binom{m}{2}\).
	An oriented \(3\)-cycle is formed by fixing \(3\) nodes so there are \(\binom{m}{3}\) such cycles.
	Now fix the orientation of the \(3\)-cycles and note that each triangle encodes \(3\) metric constraints.
	The number of constraints is therefore \(3 \binom{m}{3}\).
\end{proof}

\begin{proposition}
	Each column of \(\bT\) has \(3 (m-2)\) nonzero entries.
\end{proposition}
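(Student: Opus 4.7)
The plan is to fix an edge $\{i,j\}$ and count, over all triangle inequality constraints that involve the parameter $x_{ij}$, how many times this column of $\bT$ receives a nonzero entry. Since the rows of $\bT$ are indexed by oriented triples of nodes (or equivalently by the three inequalities associated with each unordered triple $\{i,j,k\}$), the natural organization is: enumerate triples that contain both $i$ and $j$, and within each such triple count how many of its three constraints have a nonzero coefficient on $x_{ij}$.

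First, I would write down the three inequalities generated by an unordered triple $\{i,j,k\}$ explicitly, namely
\begin{eqnarray*}
x_{ij}-x_{ik}-x_{kj} & \le & 0,\\
x_{ik}-x_{ij}-x_{jk} & \le & 0,\\
x_{jk}-x_{ji}-x_{ik} & \le & 0.
\end{eqnarray*}
The key observation is that the edge $\{i,j\}$ appears in every one of these three inequalities: once with coefficient $+1$ (when $x_{ij}$ sits on the left-hand side) and twice with coefficient $-1$ (when it appears as one of the two summands on the right-hand side of the other two). Hence each triple $\{i,j,k\}$ containing both $i$ and $j$ contributes exactly three nonzero entries to the column of $\bT$ indexed by $\{i,j\}$.

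Next, I would count the number of such triples. Since $i$ and $j$ are fixed, the third vertex $k$ ranges over the remaining $m-2$ nodes, giving $m-2$ triples. Multiplying by the three nonzero contributions per triple yields $3(m-2)$ nonzero entries in the column, as claimed. A brief remark should note that distinct triples produce distinct rows of $\bT$, so the contributions do not collide and the count is exact rather than an upper bound.

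I do not anticipate a serious obstacle here; the only subtlety is to be precise about the row-indexing convention of $\bT$ (ordered vs. unordered triples) and to verify that the stated three inequalities per triple are exactly the rows the authors use, so that every appearance of $\{i,j\}$ in some constraint is counted once and only once. Once that bookkeeping is nailed down, the identity $3(m-2)$ follows immediately from the product of ``number of triples containing $\{i,j\}$'' and ``constraints per triple in which $\{i,j\}$ participates.''
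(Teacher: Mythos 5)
Your proof is correct, but it takes a different route from the paper. You give a direct count: fix the edge $\{i,j\}$, observe that each of the $m-2$ triples containing both $i$ and $j$ generates three inequalities, every one of which involves $x_{ij}$ (once with coefficient $+1$ on the left-hand side, twice with coefficient $-1$ on the right), and multiply. The paper instead argues by induction on $m$: the base case $m=3$ is a single triangle with three constraints, and passing from $m$ to $m+1$ nodes creates exactly one new triangle containing any fixed existing edge, hence three new constraints, giving $3(m-2)+3 = 3[(m+1)-2]$. Your direct argument is arguably preferable here: it is shorter, it avoids the bookkeeping of tracking which new triangles are created in the inductive step, and it delivers the sign breakdown ($m-2$ entries equal to $+1$ and $2(m-2)$ entries equal to $-1$) essentially for free, which is exactly the content of the paper's \emph{next} proposition. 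Your closing remark that distinct triples give distinct rows, so contributions do not collide, is the right thing to check and matches the paper's row-indexing convention of $3\binom{m}{3}$ rows, three per unordered triple.
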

\begin{proof}
	In view of the previous result, the entries \(T_{ij}\) encode whether edge \(j\) participates in constraint \(i\).
	We proceed by induction on the number of nodes \(m\).
	The base case \(m = 3\) involves one triangle and is trivial.
	Note that a triangle encodes \(3\) inequalities.
	
	Now consider a complete graph on \(m\) nodes and suppose the claim holds.
	Without loss of generality, consider the collection of \(3\)-cycles oriented clockwise and fix an edge \(j\).
	Adding a node to the graph yields \(2 m\) new edges, two for each of the existing \(m\) nodes.
	This action also creates one new triangle for each existing edge.
	Thus, edge \(j\) appears in \(3(m-2) + 3 = 3 [(m+1)-2]\) triangle inequality constraints based on the induction hypothesis.
\end{proof}

\begin{proposition}
	Each column of \(\bT\) has \(m-2\) \(+1\)s and \(2(m-2)\) \(-1\)s.
\end{proposition}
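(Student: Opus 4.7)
The plan is to fix an arbitrary column of $\bT$, corresponding to some edge $(i,j)$ with $i<j$, and then partition the $3(m-2)$ constraints in which this edge participates according to the third node $k \notin \{i,j\}$ that completes the triangle. By the previous proposition, for each such $k$ the edge $(i,j)$ contributes nonzero entries to exactly three rows of $\bT$, namely the three triangle inequalities indexed by the triple $\{i,j,k\}$.

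For a fixed triple $\{i,j,k\}$, the three triangle inequalities can be written in the form
\begin{eqnarray*}
x_{ij}-x_{ik}-x_{kj} &\le& 0, \\
x_{ik}-x_{ij}-x_{jk} &\le& 0, \\
x_{jk}-x_{ji}-x_{ik} &\le& 0,
\end{eqnarray*}
one for each choice of which edge sits alone on the left side. Exploiting the symmetry $x_{ji}=x_{ij}$ forced by the $\mathrm{trivec}$ parameterization, I would read off the coefficient of $x_{ij}$ in each of these three rows: it is $+1$ in the first row (where $(i,j)$ is the distinguished edge) and $-1$ in each of the other two rows (where $(i,j)$ appears on the right-hand side). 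Thus the triple $\{i,j,k\}$ contributes one $+1$ and two $-1$s to the column of edge $(i,j)$.

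Summing over the $m-2$ choices of $k$, the total counts become $m-2$ entries equal to $+1$ and $2(m-2)$ entries equal to $-1$, which matches the previous proposition's count of $3(m-2)$ nonzero entries and establishes the claim.

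The main obstacle, and the only subtle point, is making sure the bookkeeping is consistent with the symmetric parameterization: once $\bX$ is stored as $\bx=\mathrm{trivec}(\bX)$, each unordered pair contributes a single column of $\bT$, so one must verify that the three inequalities attached to a triple $\{i,j,k\}$ are not double-counted and that the coefficient assignment is invariant under the identification $x_{ij}\equiv x_{ji}$. Once this is observed, the counting is immediate.
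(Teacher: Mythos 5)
Your proof is correct and takes essentially the same approach as the paper's: both arguments observe that within each of the $m-2$ triangles containing a fixed edge, that edge appears exactly once as the distinguished left-hand-side term (coefficient $+1$) and twice on the right-hand side (coefficient $-1$), then sum over triangles using the count of $3(m-2)$ nonzero entries from the previous proposition. Your version is just slightly more explicit in writing out the three inequalities per triple.
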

\begin{proof}
	Interpret the inequality \(x_{ij} \le x_{ik} + x_{kj}\) with \(i > k > j\) as the ordered triple \(x_{ij}, x_{ik}, x_{kj}\).
	The statement is equivalent to counting
	\begin{align*}
	a(N)
	&=
	\text{number of times \(x_{ij}\) appears in position 1, and},\\
	b(N)
	&=
	\text{number of times \(x_{ij}\) appears in position 2 or 3},
	\end{align*}
	where \(N\) denotes the number of constraints.
	In view of the previous proposition, it is enough to prove \(a(N) = m-2\).
	Note that \(a(3) = 1\), meaning that \(x_{ij}\) appears in position \(1\) exactly once within a given triangle.
	Given that an edge \((i,j)\) appears in \(3 (m-2)\) constraints, divide this quantity by the number of constraints per triangle to arrive at the stated result.
\end{proof}

\begin{proposition}
	The matrix \(\bT\) has full column rank.
\end{proposition}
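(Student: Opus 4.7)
The plan is to prove that the null space of $\bT$ is trivial, which, combined with the count of $\binom{m}{2}$ columns established in the first counting proposition, is equivalent to full column rank. Index the entries of $\bx$ by unordered edges $\{i,j\}$ as $x_{ij}$. Since each row of $\bT$ encodes an oriented triangle inequality of the form $x_{ij} - x_{ik} - x_{jk}$, the homogeneous system $\bT \bx = \bzero$ asserts that every such expression vanishes.

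The key step is to exploit the fact, already recorded in the preceding propositions, that for every unordered triple $\{i,j,k\}$ \emph{all three} orientations contribute rows to $\bT$. This yields the simultaneous equalities
\begin{align*}
x_{ij} &= x_{ik} + x_{jk}, \\
x_{ik} &= x_{ij} + x_{jk}, \\
x_{jk} &= x_{ij} + x_{ik}.
\end{align*}
Adding the first two equations gives $x_{ij} + x_{ik} = x_{ij} + x_{ik} + 2 x_{jk}$, hence $x_{jk} = 0$. Substituting back (or repeating the symmetric maneuver on the other two pairs) forces $x_{ij} = x_{ik} = 0$ as well.

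To close the argument, I would observe that since $m \ge 3$, every edge $\{i,j\}$ lies in at least one triangle, obtained by adjoining any third node $k$. The previous step therefore forces every coordinate of $\bx$ to vanish, so $\bx = \bzero$, $\bT$ has trivial null space, and thus full column rank. The only point that requires attention is confirming under the paper's indexing convention that all three oriented constraints per triangle really do appear as distinct rows of $\bT$; this is already implicit in the row count of $3\binom{m}{3}$ established earlier. Once that convention is pinned down, the algebraic core of the proof is the one-line cancellation above, so I do not anticipate a substantive obstacle.
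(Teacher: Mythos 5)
Your proof is correct, but it follows a genuinely different route from the paper's. You argue directly that the null space of $\bT$ is trivial: for any $\bx$ with $\bT\bx = \bzero$, the three oriented constraints belonging to a single triangle $\{i,j,k\}$ give $x_{ij} = x_{ik} + x_{jk}$, $x_{ik} = x_{ij} + x_{jk}$, $x_{jk} = x_{ij} + x_{ik}$, and adding two of these cancels to force each coordinate to vanish; since every edge of the complete graph on $m \ge 3$ nodes lies in a triangle, $\bx = \bzero$. Your one point of caution — that all three orientations per triangle really appear as rows — is indeed guaranteed by the paper's row count of $3\binom{m}{3}$, so there is no gap. The paper instead works with the Gram matrix $\bA = \bT^t\bT$: using the earlier column-count propositions it computes $a_{ii} = 3(m-2)$ and shows $a_{ij} = -1$ exactly when edges $i$ and $j$ share a node (else $0$), whence $\sum_{j \ne i}|a_{ij}| = 2(m-2) < 3(m-2) = |a_{ii}|$, i.e.\ strict diagonal dominance, which gives full rank. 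Your argument is shorter and more elementary, needing only the row structure of $\bT$; the paper's computation buys more than the rank statement alone, since the explicit entries of $\bT^t\bT$ and its diagonal dominance feed directly into the subsequent eigenvalue decomposition ($m-2$, $2m-2$, $3m-4$) and the Woodbury-based exact solvers used in the metric projection updates.
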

\begin{proof}
	It is enough to show that \(\bA = \bT^t\bT\) is full rank.
	The first two propositions imply
	\begin{equation*}
	a_{ii}
	=
	\langle{\bT_{i}, \bT_{i}}\rangle
	=
	\sum (\pm 1)^{2}
	=
	3 (m-2).
	\end{equation*}
	To compute the off-diagonal entries, fix a triangle and note that two edges \(i\) and \(j\) appear in all three of its constraints of the form \(x_{i} \le x_{j} + x_{k}\).
	There are three possibilities for a given constraint \(c\):
	\begin{equation*}
	T_{c,i} T_{c,j} \amp = \amp
	\begin{cases}
	-1, &\text{if \(i\) LHS, \(j\) RHS or vice-versa} \\
	\amp 1,  &\text{if \(i\) and \(j\) both appear on RHS} \\
	\amp 0,  &\text{if one of \(i\) or \(j\) is missing}.
	\end{cases}
	\end{equation*}
	It follows that
	\begin{equation*}
	a_{ij}
	=
	\langle{\bT_{i}, \bT_{j}}\rangle
	=
	\begin{cases}
	-1, &\text{if edges $i$ and $j$ overlap in constraints} \\
	\amp 0, &\text{otherwise}.
	\end{cases}
	\end{equation*}
	By Proposition B.2, an edge \(i\) appears in \(3 (m-2)\) constraints.
	Imposing the condition that edge \(j\) also appears reduces this number by \(m-2\), the number of remaining nodes that can contribute edges in our accounting.
	The calculation
	\begin{equation*}
	\sum_{j \neq i} |a_{ij}| = 2 (m-2) < 3 (m-2) = |a_{ii}|,
	\end{equation*}
	establishes that \(\bA\) is strictly diagonally dominant and hence full rank.
\end{proof}

\begin{proposition}
	The matrix \(\bT^t\bT\) has at most \(3\) distinct eigenvalues of the form \(m-2\), \(2m-2\), and \(3m-4\) with multiplicities \(1\), \(m-1\), and \(\frac{1}{2} m (m-3)\), respectively.
\end{proposition}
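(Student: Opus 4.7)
The plan is to recognize $\bT^t\bT$ as a scalar shift of $\bN\bN^t$, where $\bN$ is the vertex--edge incidence matrix of $K_m$, and then compute the spectrum via the much smaller $m \times m$ Gram matrix $\bN^t\bN$.

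First I would combine the entries identified in the preceding proposition to write $\bT^t\bT = 3(m-2)\bI - \bM$, where $\bM$ has zero diagonal and $M_{ef} = 1$ exactly when edges $e$ and $f$ share a single endpoint. Defining $\bN \in \mathbb{R}^{\binom{m}{2} \times m}$ by $N_{ev} = 1$ when $v$ is an endpoint of $e$ and $0$ otherwise, a direct count gives $(\bN\bN^t)_{ef} = |e \cap f|$, which equals $2$ on the diagonal, $1$ when $e$ and $f$ share a vertex, and $0$ otherwise. Hence $\bN\bN^t = \bM + 2\bI$, which rearranges into the key identity
\begin{eqnarray*}
\bT^t\bT & = & (3m-4)\bI - \bN\bN^t.
\end{eqnarray*}

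Next I would evaluate $\bN^t\bN$ entrywise. Its $(u,v)$ entry counts edges of $K_m$ containing both $u$ and $v$, yielding $m-1$ on the diagonal and $1$ off the diagonal, so $\bN^t\bN = (m-2)\bI_m + \bJ_m$. The spectrum of $\bJ_m$ is classical: eigenvalue $m$ with eigenvector $\bone$ and eigenvalue $0$ on $\bone^\perp$. Therefore $\bN^t\bN$ has eigenvalues $2m-2$ (multiplicity $1$) and $m-2$ (multiplicity $m-1$). Both are strictly positive for $m \ge 3$, so $\bN$ has full column rank and the nonzero spectrum of $\bN\bN^t$ coincides with that of $\bN^t\bN$, with the remaining $\binom{m}{2} - m = \frac{1}{2}m(m-3)$ eigenvalues equal to $0$.

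Finally I would substitute these eigenvalues into the identity displayed above. The eigenvalue $2m-2$ of $\bN\bN^t$ contributes $3m-4-(2m-2) = m-2$ with multiplicity $1$; the eigenvalue $m-2$ contributes $3m-4-(m-2) = 2m-2$ with multiplicity $m-1$; and the eigenvalue $0$ contributes $3m-4$ with multiplicity $\tfrac{1}{2}m(m-3)$, giving exactly the claim. The only step requiring a bit of ingenuity is spotting the identification $\bN\bN^t = \bM + 2\bI$; after that, the whole argument collapses to diagonalizing $(m-2)\bI_m + \bJ_m$ and invoking the standard identification of nonzero eigenvalues of $\bN^t\bN$ with those of $\bN\bN^t$.
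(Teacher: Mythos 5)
Your proposal is correct and follows essentially the same route as the paper: both rest on the identity $\bT^t\bT = (3m-4)\bI - \bM\bM^t$ with $\bM$ the vertex--edge incidence matrix of the complete graph, and both obtain the spectrum by diagonalizing the small Gram matrix $\bM^t\bM = (m-2)\bI_m + \bone_m\bone_m^t$ and transferring the nonzero eigenvalues. The only difference is that you justify the key identity explicitly via the line-graph adjacency structure, a step the paper dismisses as ``easy to see.''
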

\begin{proof}
Let $\bM \in \{0,1\}^{\binom{m}{2} \times m}$ be the incidence matrix of a complete graph with $m$ vertices. That is $\bM$ has entry $m_{e,v}=1$ if vertex $v$ occurs in edge $e$ and 0 otherwise. Each row of $\bM$ has two entries equal to 1; each column of $\bM$ has $m-1$ entries equal to 1. It is easy to see
\[
\bT^t \bT = (3m-4) \bI_{\binom{m}{2}} - \bM \bM^t.
\]
The Gram matrices $\bM^t \bM$ and $\bM \bM^t$ share the same positive eigenvalues. Since $\bM^t \bM = (m-2) \bI_m + m (\boldsymbol{1}_m / \sqrt m) (\boldsymbol{1}_m / \sqrt m)^t$ has eigenvalue $2m-2$ with multiplicity 1 and eigenvalue $m-2$ with multiplicity $m-1$, $\bM \bM^t$ has eigenvalue $2m-2$ with multiplicity 1, eigenvalue $m-2$ with multiplicity $m-1$, and eigenvalue 0 with multiplicity $m(m-3)/2$. Therefore the eigenvalues of $\bT^t \bT$ are $m-2$, $2m-2$, and $3m-4$ with multiplicities $1$, $m-1$, and $m(m-3)/2$ respectively.
\end{proof}

In general, it is easy to check that the matrix $m \times m$ matrix $a \bI + b{\bf 1}{\bf 1}^t$ has the eigenvector ${\bf 1}$ with eigenvalue $a+mb$ and $m-1$ orthogonal eigenvectors 
\begin{eqnarray*}
\bu_i & = & \frac{1}{i-1} \sum_{j=1}^{i-1} {\bf e}_j-{\bf e}_i ,
\qquad i=2,\ldots,m
\end{eqnarray*}
with eigenvalue $a$.  Note that each $\bu_i$ is perpendicular to 
${\bf 1}$. None of these eigenvectors is normalized to have length 1. Although the
eigenvectors $\bu_i$ are certainly convenient, they are not unique.

To recover the eigenvectors of $\bT^t\bT$, and hence those $\bD^t\bD$, we can leverage the eigenvectors of $\bM^t \bM$, which we know. The following generic observations are pertinent.
If a matrix $\bA$ has full SVD $\bU\bS\bV^t$, then its transpose has full SVD 
$\bA^t = \bV\bS\bU^t$. As mentioned $\bA\bA^t$ and $\bA^t\bA$ share the same nontrivial
eigenvalues. These can be recovered as the nontrivial diagonal entries of $\bS^2$. 
Suppose we know the eigenvectors $\bU$ of $\bA\bA^t = \bU\bS^2\bU^t$. Since  
$\bA^t \bU = \bV\bS$, then presumably we can recover some of the eigenvectors 
$\bV$ as $\bA^t\bU\bS^{+}$, where $\bS^+$ is the diagonal pseudo-inverse of $\bS$.

\subsection{Fast Subroutines for Solving Linear Systems}
\label{appendix:example1-update}
Using the Woodbury formula, the inverse of $\bT^t \bT$ can be expressed as
\begin{eqnarray*}
& & (\bT^t \bT)^{-1} \\
&=& \left[ (3m-4) \bI_{\binom{m}{2}} - \bM \bM^t \right]^{-1} \\
&=& (3m-4)^{-1} \bI_{\binom{m}{2}} - (3m-4)^{-2} \bM [-\bI_m + (3m-4)^{-1} \bM^t \bM]^{-1} \bM^t \\
&=& (3m-4)^{-1} \bI_{\binom{m}{2}} - (3m-4)^{-1} \bM [-(2m-2)\bI_m + \boldsymbol{1}_m \boldsymbol{1}_m^t]^{-1} \bM^t \\
&=& (3m-4)^{-1} \bI_{\binom{m}{2}} - (3m-4)^{-1} \bM [-(2m-2)^{-1} \bI_m - (2m-2)^{-1}(m-2)^{-1} \boldsymbol{1}_m \boldsymbol{1}_m^t] \bM^t \\
&=& \frac{1}{3m-4} \bI_{\binom{m}{2}} + \frac{2}{(3m-4)(m-1)(m-2)} \boldsymbol{1}_{\binom{m}{2}} \boldsymbol{1}_{\binom{m}{2}}^t + \frac{1}{2(3m-4)(m-1)} \bM \bM^t.
\end{eqnarray*}
Solving linear system $\bT^t \bT$ invokes two matrix vector multiplications involving the incidence matrix $\bM$. $\bM \bv$ corresponds to taking pairwise sums of the components of a vector $\bv$ of length $m$. $\bM^t \bw$ corresponds to taking a combination of column and row sums of a lower triangular matrix with the lower triangular part populated by the components of a vector $\bw$ with length $\binom{m}{2}$. Both operations cost $O(m^2)$ flops.
This result can be extended to the full fusion matrix $\bD^{t} \bD$ that incorporates non-negativity constraints and, more importantly, to the linear system $\bI + \rho \bD^{t} \bD$:
\begin{equation*}
\begin{split}
	[\bI_{\binom{m}{2}} + \rho \bD^{t} \bD]^{-1}
	&=
	[\bI_{\binom{m}{2}} + \rho (\bT^{t}\bT + \bI_{\binom{m}{2}})]^{-1} \\
	&=
	a~\bI_{\binom{m}{2}}
	+ ab\rho~\bM \bM^{t}
	+ 4abc\rho^{2}~\boldsymbol{1}_{\binom{m}{2}} \boldsymbol{1}_{\binom{m}{2}}^{t}; \\
\end{split}
\qquad
\begin{split}
	a &= [3(m-1)\rho + 1]^{-1} \\
	b &= [(2m-1)\rho + 1]^{-1} \\
	c &= [(m-1)\rho + 1]^{-1}.
\end{split}
\end{equation*}

\section{Additional Details for Convex Regression Example}
\label{appendix:example2}

We start by formulating the proximal distance version of the problem:
\begin{equation*}
h_{\rho}(\bv)
=
\frac{1}{2} \|\bM \bv - \by\|^{2}_{2}
+ \frac{\rho}{2} \dist(\bD \bv, \mathbb{R}_{-}^{m})^{2},
\end{equation*}
where $\bv = [\btheta; \vec(\bXi)]$ stacks each optimization variable into a vector of length $n(1+d)$.
This maneuver introduces matrices
\begin{equation*}
\bM = \begin{bmatrix}
\bI_{n \times n} & \boldsymbol{0}_{n \times nd}
\end{bmatrix},
\qquad
\bD = \begin{bmatrix}
\bA & \bB
\end{bmatrix},
\end{equation*}
where $[\bA \btheta]_{k} = \theta_{j} - \theta_{i}$ and $[\bB \vec(\Xi)]_{k} = \langle{\bx_{i} - \bx_{j}, \bxi_{j}}\rangle$ according to the ordering $i > j$.

\subsection{MM}
We rewrite the surrogate explicitly a least squares problem minimizing $\|\tilde{\bM} \bv - \tilde{\bb}_{n}\|^{2}_{2}$:
\begin{equation*}
\bv_{n+1} = \underset{\bv}{\argmin} \frac{1}{2} \left\|
\begin{bmatrix}
\bM \\
\sqrt{\rho} \bD
\end{bmatrix} \bv
-
\begin{bmatrix}
\bb \\
\sqrt{\rho} \mathcal{P}(\bD \bv_{n})
\end{bmatrix}
\right\|_{2}^{2},
\end{equation*}
where $\bb \equiv \by$ to avoid clashing with notation in ADMM below.
In this case it seems  better to store $\bD$ explicitly in order to avoid computing $\bx_{i} - \bx_{j}$ each time one applies $\bD$, $\bD^{t}$, or $\bD^{t} \bD$.

\subsection{Steepest Descent}

The updates $\bv_{n+1} = \bv_{n} - t_{n} \nabla h_{\rho}(\bv_{n})$ admit an exact solution for the line search parameter $t_{n}$.
Taking $\bq_{n} = \nabla h_{\rho}(\bv_{n})$ as the gradient we have
\begin{align*}
\bq_{n}
&= \bA^{t} \bA(\bv_{n} - \bb) + \rho \bD^{t} [\bD \bv_{n} - \mathcal{P}(\bD \bv_{n})], \\
t_{n}
&=
\frac{\|\bq_{n}\|^{2}}{\|\bA \bq_{n}\|^{2} + \rho \|\bD \bq_{n}\|^{2}}.
\end{align*}
Note that $\bA \bq_{n} = \nabla_{\btheta} h_{\rho}(\bv_{n})$, the gradient with respect to function values $\btheta$.

\subsection{ADMM}

Take $\by$ as the dual variable and $\blambda$ as scaled multipliers. Then the ADMM updates are
\begin{align*}
\bv_{n+1}
&=
\underset{\bv}{\argmin} \frac{1}{2} \left\|
\begin{bmatrix}
\bA \\
\sqrt{\mu} \bD
\end{bmatrix} \bv
-
\begin{bmatrix}
\bb \\
\sqrt{\mu} (\by_{n} - \blambda_{n})
\end{bmatrix}
\right\|_{2}^{2},\\
\by_{n+1}
&= \frac{\alpha}{1+\alpha} \mathcal{P}(\bz_{n}) + \frac{1}{1+\alpha} \bz_{n};
\qquad \bz_{n} = \bD \bv_{n+1} + \blambda_{n},~\alpha = \rho / \mu,
\end{align*}
and with the update for the multipliers being standard.

\section{Additional Details for Convex Clustering Example}
\label{appendix:example3}

We write  $\bu = \vec(\bU)$ and $\bx = \vec(\bX)$, so the surrogate becomes
\begin{equation*}
g_{\rho}(\bu \mid \bu_{n})
=
\frac{1}{2}\|\bu - \bx\|_{2}^{2}
+
\frac{\rho}{2} \|\bD \bu - \mathcal{P}_{S_{k}}(\bD \bu_{n})\|^{2}.
\end{equation*}

\subsection{MM}
Rewrite the surrogate explicitly a least squares problem minimizing $\|\bA \bu - \bb_{n}\|^{2}_{2}$:
\begin{equation*}
\bu_{n+1} = \underset{\bu}{\argmin} \frac{1}{2} \left\|
\begin{bmatrix}
\bI \\
\sqrt{\rho} \bD
\end{bmatrix} \bu
-
\begin{bmatrix}
\bx \\
\sqrt{\rho} \mathcal{P}_{S_{k}}(\bD \bu_{n})
\end{bmatrix}
\right\|_{2}^{2}.
\end{equation*}

\subsection{Steepest Descent}

The updates $\bu_{n+1} = \bu_{n} - t_{n} \nabla h_{\rho}(\bu_{n})$ admit an exact solution for the line search parameter $t_{n}$.
Taking $\bq_{n} = \nabla h_{\rho}(\bu_{n})$ as the gradient we have
\begin{align*}
\bq_{n}
&= (\bu_{n} - \bx) + \rho \bD^{t} [\bD \bu_{n} - \mathcal{P}_{S_{k}}(\bD \bu_{n})], \\
t_{n}
&=
\frac{\|\bq_{n}\|^{2}}{\|\bq_{n}\|^{2} + \rho \|\bD \bq_{n}\|^{2}}.
\end{align*}
Note that blocks in $[\bD \bu_{n} - \mathcal{P}_{S_{k}}(\bD\bu_{n})]_{\ell}$ are equal to $\boldsymbol{0}$ whenever the projection of block $[\bD \bu_{n}]_{\ell}$ is non-zero.

\subsection{ADMM}

Take $\by$ as the dual variable and $\blambda$ as scaled multipliers.
Minimizing the $\bu$ block involves solving a single linear system:
\begin{align*}
\bu_{n+1}
&=
\underset{\bu}{\argmin} \frac{1}{2} \left\|
\begin{bmatrix}
\bI \\
\sqrt{\mu} \bD
\end{bmatrix} \bu
-
\begin{bmatrix}
\bx \\
\sqrt{\mu} (\by_{n} - \blambda_{n})
\end{bmatrix}
\right\|_{2}^{2}, \\
\by_{n+1}
&= \frac{\alpha}{1+\alpha} \mathcal{P}_{S_{k}}(\bz_{n}) + \frac{1}{1+\alpha} \bz_{n};
\qquad \bz_{n} = \bD \bu_{n+1} + \blambda_{n},~\alpha = \rho / \mu.
\end{align*}
Multipliers follow the standard update.

\subsection{Blockwise Sparse Projection}

The projection $\mathcal{P}_{S_{k}}$ maps a matrix to a sparse representation with $k$ non-zero columns (or blocks in the case of the vectorized version).
In the context of clustering, imposing sparsity permits a maximum of $k$ violations in consensus of centroid assignments, $\bu_{i} = \bu_{j}$.
Letting $\Delta_{\ell} \equiv \Delta_{ij} = \|\bu_{i} - \bu_{j}\|$ denote pairwise distances and $M = \binom{m}{2}$ denote the number of unique pairwise distances, we define the projection along blocks $\bv_{\ell} = \bu_{i} - \bu_{j}$ for each pair as
\begin{align*}
\mathcal{P}_{S_{k}}(\bv_{\ell})
=
\begin{cases}
\bv_{\ell}, & \text{if}~\Delta_{\ell} \in \{\Delta_{(M)}, \Delta_{(M-1)}, \ldots \Delta_{(M-k+1)}\} \\
\boldsymbol{0}, & \text{otherwise}.
\end{cases}
\end{align*}
Here the notation $x_{(i)}$ represents the $i$-th element in an ascending list.
Concretely, the magnitude of a difference $\bv_{\ell}$ must be within the top $k$ distances.
An alternative, helpful definition is based on the smallest distances
\begin{align*}
\mathcal{P}_{S_{k}}(\bv_{\ell})
=
\begin{cases}
\boldsymbol{0}, & \text{if}~\Delta_{k} \in \{\Delta_{(1)}, \Delta_{(2)}, \ldots \Delta_{(k)}\} \\
\bv_{\ell}, & \text{otherwise}
\end{cases}
\end{align*}
Thus, it is enough to find a pivot $\Delta_{(M-k+1)}$ or $\Delta_{(k)}$ that splits the list into the top $k$ elements.
Because the hyperparameter $k$ has a finite range in $\{0,1,2,\ldots,\binom{m}{2}\}$ one can exploit symmetry to reduce the best/average computational complexity in a search procedure.
We implement this projection using a partial sorting algorithm based on quicksort, and note that it is set-valued in general.

\section{Additional Details for Image Denoising  Example}
\label{appendix:example4}

Here we restate the total variation denoising problem to take advantage of proximal operators in the proximal distance framework.
We minimize the penalized objective
\begin{align*}
h_{\rho}(\bU)
&=
\frac{1}{2}\|\bu - \bw\|_{F}^{2}
+
\frac{\rho}{2} \dist(\bD \bu, S_{\gamma})^{2},
\end{align*}
where $\bw = \vec(\bW)$ is a noisy input image and $S_{\gamma}$ is the $\ell_{1}$ ball with radius $\gamma$. Thus, $\gamma$ may be interpreted as the target total variation of the reconstructed image.
Distance majorization yields the surrogate
\begin{equation*}
g_{\rho}(\bu \mid \bu_{n})
=
\frac{1}{2}\|\bu - \bw\|_{2}^{2}
+
\frac{\rho}{2} \|\bD \bu - \mathcal{P}_{\gamma}(\bD \bu_{n})\|^{2}.
\end{equation*}
Here $\mathcal{P}_{\gamma}(\bD \bu)$ enforces sparsity in all derivatives through projection onto the $\ell_{1}$ ball with radius $\gamma$.
Because $\bD$ is ill-conditioned, we append an additional row with zeros everywhere except the last entry; that is, $\bD = [\bD_{n}, \bD_{p}, \be_{p}]$ with $\bu \in \mathbb{R}^{p}$.
In this case, the sparse projection applies to all but the last component of $\bD \bu$.

\subsection{MM}
Rewrite the surrogate explicitly as a least squares problem:
\begin{equation*}
\bu_{n+1} = \underset{\bx}{\argmin} \frac{1}{2} \left\|
\begin{bmatrix}
\bI \\
\sqrt{\rho} \bD
\end{bmatrix} \bu
-
\begin{bmatrix}
\bw \\
\sqrt{\rho} \mathcal{P}_{\gamma}(\bD \bu_{n})
\end{bmatrix}
\right\|_{2}^{2}.
\end{equation*}

\subsection{Steepest Descent}

The updates $\bu_{n+1} = \bu_{n} - t_{n} \nabla h_{\rho}(\bu_{n})$ admit an exact solution for the line search parameter $t_{n}$.
Taking $\bq_{n} = \nabla h_{\rho}(\bu_{n})$ as the gradient we have
\begin{align*}
\bq_{n}
&= (\bu_{n} - \bw) + \rho \bD^{t} [\bD \bu_{n} - \mathcal{P}_{\gamma}(\bD \bu_{n})], \\
t_{n}
&=
\frac{\|\bq_{n}\|^{2}}{\|\bq_{n}\|^{2} + \rho \|\bD \bq_{n}\|^{2}}.
\end{align*}

\subsection{ADMM}

We denote by  $\by$  the dual variable and $\blambda$ the scaled multipliers.
Minimizing the $\bu$ block involves solving a single linear system:
\begin{align*}
\bu_{n+1}
&=
\underset{\bx}{\argmin} \frac{1}{2} \left\|
\begin{bmatrix}
\bI \\
\sqrt{\mu} \bD
\end{bmatrix} \bu
-
\begin{bmatrix}
\bw \\
\sqrt{\mu} (\by_{n} - \blambda_{n})
\end{bmatrix}
\right\|_{2}^{2}, \\
\by_{n+1}
&= \frac{\alpha}{1+\alpha} \mathcal{P}_{\gamma}(\bz_{n}) + \frac{1}{1+\alpha} \bz_{n};
\qquad \bz_{n} = \bD \bu_{n+1} + \blambda_{n},~\alpha = \rho / \mu.
\end{align*}
Multipliers follow the standard update.

\section{Additional Details for Condition Number Example}
\label{appendix:example5}

Given a matrix $\bM = \bU \bSigma \bV^{-1}$ with singular values $\sigma_{1} \ge \sigma_{2} \ge \ldots \ge \sigma_{p}$, we seek a new matrix $\bN = \bU \bX \bV^{-1}$ such that $\mathrm{cond}(\bB) = x_{1} / x_{p} \le c$.
We minimize the penalized objective
\begin{align*}
h_{\rho}(\bx)
&=
\frac{1}{2}\|\bx - \bsigma\|^{2}
+
\frac{\rho}{2} \dist(\bD \bx, \mathbb{R}^{p^{2}}_{-})^{2},
\end{align*}
as suggested by the Von Neumann-Fan inequality.
The fusion matrix $\bD = \bC + \bS$ encodes the constraints $x_{i} - c x_{j} \le 0$.
Distance majorization yields the surrogate
\begin{equation*}
g_{\rho}(\bx \mid \bx_{n})
=
\frac{1}{2}\|\bx - \bw\|_{2}^{2}
+
\frac{\rho}{2} \|\bD \bx - \mathcal{P}_{-}(\bD \bx_{n})\|^{2}.
\end{equation*}
To be specific, the matrix $\bC=-c {\bf 1}_{p} \otimes \bI_{p}$ scales the $p \times p$ identity matrix by $-c$ and stacks it $p$ times. Similarly, the matrix $\bS = \bI_{p} \otimes {\bf 1}_{p}$ stacks $p$ matrices of dimension $p \times p$. Each of these stacked matrices has $(p-1)$ ${\bf 0}_p$ columns and one shifted ${\bf 1}_p$ column. For example, for $p=2$
\begin{eqnarray*}
\bS & = & \begin{bmatrix} \begin{pmatrix} 1 & 0 \\ 1 & 0 \end{pmatrix} \\
\begin{pmatrix} 0 & 1 \\ 0 & 1 \end{pmatrix} \end{bmatrix}.
\end{eqnarray*}

\subsection{MM}
Rewrite the surrogate explicitly a least squares problem minimizing $\|\bA \bx - \bb_{n}\|^{2}_{2}$:
\begin{equation*}
\bx_{n+1} = \underset{\bx}{\argmin} \frac{1}{2} \left\|
\begin{bmatrix}
\bI \\
\sqrt{\rho} \bD
\end{bmatrix} \bx
-
\begin{bmatrix}
\bsigma \\
\sqrt{\rho} \mathcal{P}(\bD \bx_{n})
\end{bmatrix}
\right\|_{2}^{2}.
\end{equation*}
Applying the matrix inverse from before yields an explicit formula (with $a$ and $b$ defined as before):
\begin{equation*}
\bx_{n+1}
=
\frac{1}{a}\left[
\bz_{n} - \frac{\boldsymbol{1}^{t}\bz_{n}}{p - (a/b)} \boldsymbol{1}
\right];
\qquad
\bz_{n} = \bsigma + \rho \bD^{t} \mathcal{P}(\bD \bx_{n}),
~a = 1 + \rho p (c^{2} + 1), ~b = 2 \rho c.
\end{equation*}

\subsection{Steepest Descent}

The updates $\bx_{n+1} = \bx_{n} - t_{n} \nabla h_{\rho}(\bx_{n})$ admit an exact solution for the line search parameter $t_{n}$.
Taking $\bq_{n} = \nabla h_{\rho}(\bx_{n})$ as the gradient we have
\begin{align*}
\bq_{n}
&= (\bx_{n} - \bu) + \rho \bD^{t} [\bD \bx_{n} - \mathcal{P}_{\nu}(\bD \bx_{n})], \\
t_{n}
&=
\frac{\|\bq_{n}\|^{2}}{\|\bq_{n}\|^{2} + \rho \|\bD \bq_{n}\|^{2}}.
\end{align*}

\subsection{ADMM}

Take $\by$ as the dual variable and $\blambda$ as scaled multipliers.
The formula for the MM algorithm applies in updating $\bx_{n}$, except we replace $\rho$ with $\mu$ and $\mathcal{P}(\bD \bx_{n})$ with $\by_{n} - \blambda_{n}$:
\begin{align*}
\bx_{n+1}
&=
\frac{1}{a}\left[
\bz^{1}_{n} - \frac{\boldsymbol{1}^{t}\bz^{1}_{n}}{p - (a/b)} \boldsymbol{1}
\right];
\qquad
\bz_{n}^{1}
=
\bsigma + \mu \bD^{t} (\by_{n} - \blambda_{n}),
~a = 1 + \mu p (c^{2} + 1), ~b = 2 \mu c \\
\by_{n+1}
&= \frac{\alpha}{1+\alpha} \mathcal{P}(\bz^{2}_{n}) + \frac{1}{1+\alpha} \bz^{2}_{n};
\qquad \bz^{2}_{n} = \bD \bx_{n+1} + \blambda_{n},~\alpha = \rho / \mu.
\end{align*}
Multipliers follow the standard update.

\subsection{Explicit Matrix Inverse}

Both ADMM and MM reduce to solving a linear system.
Fortunately, the Hessian for $h_{\rho}(\bx)$ reduces to a Householder-like matrix.
First we note that it is trivial to multiply either $\bC^t$ or $\bS^t$ by a $p^2$-vector. The more interesting problem is calculating $(\bA_\rho^t\bA_\rho)^{-1}$, where $\nabla h_{\rho}^{2} = \bA_\rho^t\bA_\rho=\bI_p+\rho \bD^t\bD$. The reader can check the identities 
\begin{eqnarray*}
\bC^t\bC & = &  c^2p \bI_{p} \;\; \text{and} \;\;  \bC^t\bS \amp = \amp -c{\bf 1}_p {\bf 1}_p^t , \\
\bS^t\bC & = & -c {\bf 1}_p {\bf 1}_p^t \;\; \text{and} \;\; \bS^t\bS \amp = \amp p \bI_p.
\end{eqnarray*}
It follows that $\bA_\rho^t\bA_\rho = (1+\rho p (c^{2} + 1))\bI_{p} - 2c\rho {\bf 1}_{p}{\bf 1}_{p}^t$.
Applying the Sherman-Morrison formula to results in
\begin{align*}
[\bI_{p} + \rho \bD^{t} \bD]^{-1}
&=
[a \bI_{p} - b \boldsymbol{1}_{p} \boldsymbol{1}_{p}^{t}]^{-1} \\
&=
-b^{-1} \left[
-(b/a) \bI_{p} - \frac{(b/a)^{2} \boldsymbol{1}_{p} \boldsymbol{1}_{p}^{t}}{1 - (a/b) \boldsymbol{1}_{p}^{t} \boldsymbol{1}_{p}}
\right] \\
&=
\frac{1}{a} \left[
\bI_{p} - \frac{\boldsymbol{1}_{p} \boldsymbol{1}_{p}^{t}}{p - a/b}
\right],
\end{align*}
where $a = 1+\rho p (c^{2} + 1)$ and $b = 2\rho c$.
These simplifications make the exact proximal distance updates easy to compute.

\section{Choice of Linear Solver}

Updating parameters using MM or ADMM requires solving large-scale linear systems of the form $(\bI + c_{t} \bD^{t} \bD) \bx = \bb$.  
Here $c_{t}$ is a scalar that depends on the outer iteration number $t$, in general, and the matrix on the LHS is square,  symmetric, and often reasonably well-conditioned.
Standard factorization methods like Cholesky and spectral decompositions cannot be applied without efficient update rules based on $c_{t}$.
Instead, we turn to iterative methods, specifically conjugate gradients (CG) and LSQR, and use a linear map approach to adequately address sparsity, structure, and computational efficiency in matrix-vector multiplication.
Tables \ref{lsMM} and \ref{lsADMM} summarize performance metrics for MM and ADMM using both iterative linear solvers on instances of the convex regression problem.
Times are averages taken over 3 replicates with standard deviations in parentheses, and iteration counts reflect the total number of inner iterations with outer counts in parentheses.
We find no appreciable difference between CG and LSQR except on timing, and therefore favor CG in all our benchmarks.

\begin{table}[tbp]
    \centering
	\setlength{\tabcolsep}{2pt}
	\begin{scriptsize}
    \begin{tabular}{rrcccccccc}
    \toprule
    &  & \multicolumn{2}{c}{Time (s)} & \multicolumn{2}{c}{Loss $\times 10^{6}$} & \multicolumn{2}{c}{Distance $\times 10^{6}$} & \multicolumn{2}{c}{Iterations}\\
    \cmidrule(lr){3-4} \cmidrule(lr){5-6} \cmidrule(lr){7-8} \cmidrule(lr){9-10}
    features & samples & CG & LSQR & CG & LSQR & CG & LSQR & CG & LSQR\\
    \midrule
    20 & 50 & $\bf{0.39}$ & $0.68$ & $\bf{0.991}$ & $\bf{0.991}$ & $2.7$ & $\bf{2.65}$ & $\bf{104}$ & $\bf{104}$\\
    &  & $(0.00132)$ & $(0.00607)$ &  &  &  &  & $(2)$ & $(2)$\\
    20 & 100 & $\bf{1.46}$ & $2.67$ & $0.995$ & $\bf{0.994}$ & $\bf{3.08}$ & $3.16$ & $\bf{190}$ & $\bf{190}$\\
    &  & $(0.000164)$ & $(0.00555)$ &  &  &  &  & $(2)$ & $(2)$\\
    20 & 200 & $\bf{7.7}$ & $14.7$ & $\bf{0.984}$ & $\bf{0.984}$ & $\bf{25.1}$ & $\bf{25.1}$ & $\bf{298}$ & $\bf{298}$\\
    &  & $(0.166)$ & $(0.00913)$ &  &  &  &  & $(2)$ & $(2)$\\
    20 & 400 & $\bf{30.2}$ & $63$ & $\bf{0.997}$ & $\bf{0.997}$ & $9.21$ & $\bf{8.89}$ & $\bf{412}$ & $\bf{412}$\\
    &  & $(0.0275)$ & $(0.268)$ &  &  &  &  & $(2)$ & $(2)$\\
    \bottomrule
    \end{tabular}
	\end{scriptsize}
    \caption{
        \label{lsMM}
        Performance of MM on convex regression using CG and LSQR.
        Both inner and outer iterations are reported with the latter in parantheses.
    }
\end{table}

\begin{table}[tbp]
    \centering
    \setlength{\tabcolsep}{2pt}
	\begin{scriptsize}
    \begin{tabular}{rrcccccccc}
    \toprule
    &  & \multicolumn{2}{c}{Time (s)} & \multicolumn{2}{c}{Loss $\times 10^{6}$} & \multicolumn{2}{c}{Distance $\times 10^{6}$} & \multicolumn{2}{c}{Iterations}\\
    \cmidrule(lr){3-4} \cmidrule(lr){5-6} \cmidrule(lr){7-8} \cmidrule(lr){9-10}
    features & samples & CG & LSQR & CG & LSQR & CG & LSQR & CG & LSQR\\
    \midrule
    20 & 50 & $\bf{0.399}$ & $0.677$ & $\bf{0.91}$ & $\bf{0.91}$ & $\bf{6.96}$ & $\bf{6.96}$ & $\bf{98}$ & $\bf{98}$\\
    &  & $(0.00193)$ & $(0.00507)$ &  &  &  &  & $(2)$ & $(2)$\\
    20 & 100 & $\bf{1.58}$ & $2.83$ & $\bf{0.996}$ & $\bf{0.996}$ & $\bf{0}$ & $\bf{0}$ & $\bf{194}$ & $\bf{194}$\\
    &  & $(0.0329)$ & $(0.00398)$ &  &  &  &  & $(2)$ & $(2)$\\
    20 & 200 & $\bf{7.78}$ & $15$ & $\bf{0.961}$ & $\bf{0.961}$ & $\bf{43.8}$ & $44$ & $\bf{296}$ & $\bf{296}$\\
    &  & $(0.0105)$ & $(0.0386)$ &  &  &  &  & $(2)$ & $(2)$\\
    20 & 400 & $\bf{30}$ & $60.4$ & $\bf{1.42}$ & $1.45$ & $\bf{64.6}$ & $67.8$ & $377$ & $\bf{376}$\\
    &  & $(0.0157)$ & $(0.0841)$ &  &  &  &  & $(2)$ & $(2)$\\
    \bottomrule
    \end{tabular}
	\end{scriptsize}
    \caption{
        \label{lsADMM}
        Performance of ADMM on convex regression using CG and LSQR.
        Both inner and outer iterations are reported with the latter in parantheses.
    }
\end{table}

\section{Software \& Computing Environment}

Code for our implementations and numerical experiments is available at \url{https://github.com/alanderos91/ProximalDistanceAlgorithms.jl} and is based on the Julia language \citep{Julia-2017}.
Additional packages used include Plots.jl \citep{Plots.jl-2021}, GR.jl \citep{GR.jl-2021}, and \citep{Convex.jl-2014}. Numerical experiments were carried out on a Manjaro Linux 5.10.89-1 desktop environment using 8 cores on an Intel 10900KF at 4.9 GHz and 32 GB RAM.

\end{appendices}
%
%
\vskip 0.2in
\bibliography{references}

\end{document}